\numberwithin{equation}{section}
\author{Edgar Assing}
\title{On sup-norm bounds part II: $GL(2)$ Eisenstein series}
\theoremstyle{plain}
\newtheorem{lemma}{Lemma}[section]
\newtheorem{prop}{Proposition}[section]
\newtheorem{theorem}{Theorem}[section]
\newtheorem{cor}{Corollary}[section]
\newtheorem{rem}{Remark}[section]
\DeclareMathOperator{\R}{\mathbb{R}}
\DeclareMathOperator{\C}{\mathbb{C}}
\DeclareMathOperator{\A}{\mathbb{A}}
\DeclareMathOperator{\Q}{\mathbb{Q}}
\DeclareMathOperator{\Z}{\mathbb{Z}}
\DeclareMathOperator{\N}{\mathbb{N}}
\DeclareMathOperator{\No}{\mathcal{N}}
\DeclareMathOperator{\n}{\mathfrak{n}}
\DeclareMathOperator{\p}{\mathfrak{p}}
\DeclareMathOperator{\op}{\mathfrak{o}}
\DeclareMathOperator{\vol}{\text{Vol}}
\DeclareMathOperator{\sgn}{\text{sgn}}
\newcommand{\red}[1]{{#1}}
\newcommand{\redd}[1]{{#1}}
\newcommand{\abs}[1]{\left\vert #1 \right\vert}
\newcommand{\du}[1]{\underline{\underline{#1}}}
\newcommand{\nilp}[1]{\left(\begin{matrix} 1&#1 \\ 0&1\end{matrix}\right)}
\newcommand{\cent}[1]{\left(\begin{matrix} #1&0 \\ 0&#1\end{matrix}\right)}
\newcommand{\am}[1]{\left(\begin{matrix} #1&0 \\ 0&1\end{matrix}\right)}
\begin{document}

\maketitle

\begin{abstract}
In this paper we consider the sup-norm problem in the context of analytic Eisenstein series for \red{$GL_2$} over number fields. We prove a hybrid bound which is sharper than the corresponding bound for Maa\ss\  forms. Our results generalize those of Huang and Xu where the case of Eisenstein series of square-free levels over the base field $\Q$ had been considered.
\end{abstract}

\tableofcontents

\section{Introduction}

The general notion of an automorphic form includes the Eisenstein series. Many of the problems in
analytic number theory that have been studied for cusp forms can be studied for Eisenstein series as well. One example is Quantum unique ergodicity. With slight modification it carries over to Eisenstein series, see \cite{LS95}, \cite{Tr11}, and \cite{Zh16}. Another measure of equidistribution is the $L^{\infty}$-norm. The sup-norm problem is very popular for cusp forms in many different settings. See \cite{BHMM16}, \cite{Sa15}, and the references within for more information on this subject. But also this problem carries over to Eisenstein series. This was done \red{in} \cite{Yo15} for the unitary Eisenstein series on $SL_2(\Z)\setminus \mathbb{H}$ and for congruence subgroups of square free level in \cite{HX16}. In this paper we will generalize their work to the number field setting. We go even further and allow for arbitrary level and central character.

The pure fact that Eisenstein series link the continuous spectrum of a reductive group $G$ to the spectrum of lower rank groups gives rise to \red{an} intriguing interplay between say $GL_2$ and $GL_1$ theories. An example of this is the Burgess type subconvexity bound for $GL_1$ which is proved in \red{\cite{Wu16_2}} by applying $GL_2$ results to Eisenstein series. Another nice example is \cite{Sa04}, where the theory of Eisenstein series is used to give lower bounds for $\zeta(1+it)$. This last method has vast generalizations. See for example \cite{GL06}.
Our case is reverse. We exploit that the theory of Hecke $L$-functions over number fields is \red{well-developed} and use this to produce an improved amplifier. This leads to the remarkable fact that (as in \cite{Yo15}) we get sup-norm bounds which are in some aspects better than the state of the art results for cusp forms. It was already observed in \cite{IS95} that one can improve the sup-norm bound in the spectral aspect if one has a better understanding of the Hecke eigenvalues. This is exactly the ingredient we gain from the classically well\red{-}developed $GL_1$ theory. More precisely, we use highly non-trivial \red{zero-free} regions for Hecke $L$-functions to derive an asymptotic expansion for generalized divisor sums. This will serve as a lower bound for the amplifier.

Note that recently in \red{\cite{Ju16}, as well as in \cite{RP17},} it was shown that the quality of the sup-norm bound that we achieve for Eisenstein series holds for $SL_2(\Z)$-Maa\ss\  forms on average.
 
Before we can give a precise statement of our theorems we have to introduce some notation. We assume that the reader is familiar with the results and the notation in \cite{As17_2} and \red{mainly} focus on the aspects where Eisenstein series differ from cusp forms. 

\redd{\textbf{Acknowledgements.} I would like to thank the referee for carefully reading the manuscript, which has led to a significant improvement of this work. }

\subsection{Set-up and basic definitions} \label{sec:set_uP}

\red{Let $F$ be a number field of degree $n$, class number $h_F$, discriminant $d_F$, different ideal $\mathfrak{d}$ and ring of integers $\mathcal{O}_F$. The field $F$ comes with $r_1$ real embeddings and $r_2$ pairs of complex embeddings. These make up the archimedean places of $F$ which are denoted by $\nu$. At the same time we use $\nu$ for the corresponding embedding $\nu\colon F\to F_{\nu}$, where $F_{\nu}$ is either $\R$ or $\C$ depending on the type of $\nu$. We equip the local fields $F_{\nu}$ with the Lebesgue measure $\mu_{\nu}$ coming  from either $\R$ or $\C$. Further we let $\abs{\cdot}$ be the standard absolute value on $F,\R \subset \C$ and set $\abs{\cdot}_{\nu} = \abs{\cdot}^{[F_{\nu}\colon \R]}$. The non-archimedean places of $F$ are associated to prime ideals $\p$ of $\mathcal{O}_F$. They come with corresponding (canonically normalized) valuation $v_{\p}(\cdot)$ and absolute value $\abs{\cdot}_{\p} = q_{\p}^{-v_{\p}(\cdot)}$, where $q_{\p}=\mathcal{N}(\p)$. The local fields $F_{\p}$ have ring of integers $\mathfrak{o}_{\p}$, uniformizer $\varpi_{\p}$ and unique prime ideal $\p = (\varpi_{\p})\subset \mathfrak{o}_{\p}$. Let $\mu_{\p}$ (respectively $\mu_{\p}^{\times}$) be the Haar measure on $(F_{\p},+)$ (respectively $(F^{\times},\cdot)$) normalized by $\mu_{\p}(\mathfrak{o}_{\p}) = 1$ (respectively $\mu_{\p}^{\times}(\mathfrak{o}^{\times}_{\p}) = 1$).

Define $F_{\infty} = \prod_{\nu} F_{\nu}$ and $\abs{\cdot}_{\infty} = \prod_{v}\abs{\cdot}_{\nu}$. We use $\abs{\cdot}_{\R}$ (respectively $\abs{\cdot}_{\C}$) to denote the part of $\abs{\cdot}_{\infty}$ coming from the real (respectively complex) embeddings only. Let $\A_{\text{fin}}$ denote the finite ad\'eles equipped with the absolute value $\abs{\cdot}_{\text{fin}}$ being the product of all the local absolute values. The full ad\'ele ring is given by $\A_F = F_{\infty}\times \A_{\text{fin}}$ and equipped with $\abs{\cdot}_{\A}$ and $\mu$ in the usual manner. We also define the set of totally positive field elements $F^+ $ to contain all $x\in F$ such that $x_{\nu} >0$ for all real $\nu$. Furthermore, put $F^0(\A_F)=\{ a\in \A_F\colon \abs{a}_{\A_F}=1 \}$ and $F_{\infty}^+= \R^+ \subset F_{\infty}$ diagonally. Finally, we choose ideal representatives $\theta_1, \dots, \theta_{h_F}\in \hat{\mathcal{O}}_{F}$.

Write $[\mathfrak{m}]_{\n}= \frac{\mathfrak{m}}{(\mathfrak{m},\n^{\infty})}$ for the coprime-to-$\n$ part of an ideal $\mathfrak{m}$. 

To a Hecke character $\chi\colon F^{\times}\setminus \A_F^{\times} \to \C$ we associate the corresponding completed $L$-function
\begin{equation}
	\Lambda(s,\chi) = \underbrace{\prod_{\nu} L_{\nu}(s,\chi_{\nu})}_{= \gamma_{\infty}(s,\chi)} \underbrace{\prod_{\p} L_{\p}(s,\chi_{\p}) }_{=L(s,\chi)}. \nonumber
\end{equation}
For the trivial character $\chi=1$ the local factors are $ L_{\p}(1,s) = \zeta_{\p}(s) =  (1-q_{\p}^{-s})^{-1}$, $L_{\nu}(1,s) = \Gamma_{\R}(s) = \pi^{-\frac{s}{2}}\Gamma(\frac{s}{2})$ if $\nu$ is real, and $L_{\nu}(1,s) =\Gamma_{\C}(s) = 2(2\pi)^{-s}\Gamma(s)$ otherwise. Further, put $\zeta_{\n}(s) = \prod_{\p\mid\n} \zeta_{\p}(s)$. 

Let $R$ be a commutative ring with 1. In practice $R$ will be one of the objects introduced above. We set $G(R) = GL_2(R)$ and define the subgroups
\begin{eqnarray}
	Z(R) = \left\{  z(r)=\cent{r} \colon r\in R^{\times} \right\},& \quad A(R) = \left\{  a(r)=\am{r} \colon r\in R^{\times} \right\},  \nonumber\\
	N(R) = \left\{  n(x)=\nilp{x} \colon x\in R \right\}& \text{ and } \quad B(R) = Z(R)A(R)N(R). \nonumber
\end{eqnarray}
Choose the maximal compact subgroups
\begin{eqnarray}
	K_{\nu} &=& \begin{cases} U_2(\C) & \text{ if $\nu$ is complex,} \\ O_2(\R) & \text{if $\nu$ is real} \end{cases} \subset G(F_{\nu}), \nonumber\\
	K_{\p} &=& GL_2(\op_{\p}) \subset G(F_{\p}) \text{ and } \nonumber \\
	K_{\infty} &=& \prod_{\nu} K_{\nu} \subset G(F_{\infty}). \nonumber 
\end{eqnarray}
For $R=F_{\p}$ we further define
\begin{eqnarray}
	K_{\p}^0(n) &=& K_{\p} \cap \left[ \begin{matrix} \op_{\p}&\varpi_{\p}^n\op_{\p} \\ \op_{\p} &\op_{\p}\end{matrix} \right], K_{0,\p}(n) = K_{\p} \cap \left[ \begin{matrix} \op_{\p}&\op_{\p} \\ \varpi_{\p}^n\op_{\p} &\op_{\p}\end{matrix} \right] \text{ and }\nonumber\\
	K_{1,\p}(n) &=& K_{\p} \cap \left[ \begin{matrix} 1+\varpi_{\p}^n\op_{\p}&\op_{\p} \\ \varpi_{\p}^n\op_{\p} &\op_{\p}\end{matrix} \right], K_{2,\p}(n) = K_{\p} \cap \left[ \begin{matrix} \op_{\p}&\op_{\p} \\ \varpi_{\p}^n\op_{\p} &1+ \varpi_{\p}^n\op_{\p}\end{matrix} \right]. \nonumber
\end{eqnarray}
Globally we put
\begin{equation}
	K_1(\n) = K_{\infty} \prod_{\p} K_{1,\p}(v_{\p}(\n)) \text{ and } K=K_{\infty} \prod_{\p}K_{\p}. \nonumber
\end{equation}
The long Weyl element is given by
\begin{equation}
	\omega=\left(\begin{matrix} 0&1 \\ -1&0\end{matrix}\right). \nonumber
\end{equation}

These groups are equipped with the following measures. Locally we use the identifications $N(R) = (R,+)$, $A(R) = R^{\times}$, and $Z(R) = R^{\times}$ to transport the measures defined on the local fields to the corresponding groups. The compact groups $K_{\p}$ and $K_{\nu}$ are equipped with the unique probability Haar measure on $\mu_{K_{\p}}$ and $\mu_{K_{\nu}}$.
Globally we use the following product measure on $K$, $A(\A_F)$ and $N(\A_F)$:
\begin{equation}
	\mu_{K} = \prod_{\nu} \mu_{K_{\nu}} \prod_{\p} \mu_{K_{\p}}, \quad \mu_{A(\A_F)} = \prod_{\nu} \mu_{\nu}^{\times} \prod_{\p} \mu_{\p}^{\times} \text{ and } \mu_{N(\A_F)} = \frac{2^{r_2}}{\sqrt{\abs{d_F}}} \prod_{\nu} \mu_{\nu} \prod_{\p} \mu_{\p}. \nonumber
\end{equation}
Finally, we define 
\begin{equation}
	\int_{Z(\A_F)\setminus G(\A_F)} f(g) d\mu(g) = \int_K \int_{\A_F^{\times}} \int_{N(\A_F)} f(na(y) k) d\mu_{N(\A_F)}(n) \frac{d\mu^{\times}_{\A_F^{\times}}(y)}{\abs{y}} d\mu_K(k) \label{eq:integral_convention}
\end{equation}
as in \cite{GJ79}.}

We define the function $H\colon G(\A_F) \to \R_{+}$ via the Iwasawa decomposition as follows
\begin{equation}
	H\left( \left( \begin{matrix} 1&x \\ 0&1 \end{matrix} \right) \left( \begin{matrix} a&0 \\ 0&b \end{matrix} \right) k \right) = \abs{\frac{a}{b}}_{\A_F} \text{ for all } k\in K. \nonumber
\end{equation} 
$H$ factors in the obvious way. We have $H=\prod_{\nu}H_{\nu}\prod_{\p}H_{\p}$.

Fix two unitary characters $\chi_1,\chi_2\colon F^{\times} \setminus \A_{F}^{\times}\to \C$ such that $\chi_1\chi_2^{-1}\vert_{F_{\infty}^+} = 1$. We associate a family of admissible $G(\A_F)$-representations by defining the twisted versions of these characters 
\begin{equation}
	\chi_1(s) = \abs{\cdot}_{\A_F}^s\chi_1 \text{ and } \chi_2(-s) = \abs{\cdot}_{\A_F}^{-s} \chi_2 \nonumber
\end{equation}
and introducing the global principal series representations
\begin{equation}
	(\pi(s), \mathbf{H}(s)) = (\chi_1(s)\boxplus \chi_2(-s), \red{\mathcal{B}}(\chi_1(s),\chi_2(-s))). \nonumber
\end{equation} 

It is well\red{-}known that if $s\neq \pm\frac{1}{2}$ these representations are irreducible. As usual we have the factorization
\begin{equation}
	\pi(s) = \bigotimes_{\nu}\big(\chi_{1,\nu}(s)\boxplus \chi_{2,\nu}(-s) \big)\bigotimes_{\p}\big(\chi_{1,\p}(s)\boxplus \chi_{2,\p}(-s)\big). \nonumber
\end{equation}

We can view $\red{\mathbf{H}(s)}$ as a trivial holomorphic fibre bundle over $\red{\mathbf{H}}=\red{\mathbf{H}}(0)$. Thus, $v\in \red{\mathbf{H}}$ gives rise to a section
\begin{equation}
	[v(s)](g) = v(g)\cdot H(g)^s \in \red{\mathbf{H}}(s). \nonumber
\end{equation}
To such a section we associate the Eisenstein series
\begin{equation}
	E_v(s,g) = \sum_{\gamma\in B(F)\setminus G(F)} [v(s)](\red{\gamma}g). \label{eq:averaging_def} 
\end{equation}
This is well\red{-}defined for $\Re(s)>\frac{1}{2}$ but can be meromorphically continued to all $\C$. For more details on this see \cite[Section~5]{GJ79}. 

\red{We assume that at} the archimedean places the characters $\chi_1$ and $\chi_2$ are given by
\begin{eqnarray}
	\chi_j(y) &=&  \abs{y}_{\nu}^{ it_{\nu,j}}\sgn(y)^{m_{\nu}} \text{ if $\nu$ is real,} \nonumber \\
	\chi_j(re^{i\theta})  &=& r^{ i2t_{\nu,j}} e^{i m_{\nu} \theta} \text{ else.} \nonumber
\end{eqnarray}  
Analogously to the Maa\ss\  form situation considered in \cite{As17_2} the parameters \red{$t_{\nu,1}$ and $t_{\nu,2}$} describe the spectral properties of $E_v(s,g)$. In view of this we define 
\begin{equation}
	\red{t_{\nu} = (t_{\nu,1}-t_{\nu,2})/2 \text{ and } s_{\nu} = t_{\nu,1}+t_{\nu,2}.}\nonumber 
\end{equation}
Then the correct spectral parameter for the Eisenstein series $E_v(s,\cdot)$ is $(\lambda_{\nu}(s))_{\nu}$, where
\begin{equation}
	\lambda_{\nu}(s) = \begin{cases}
		\frac{1}{4}+(t_{\nu}+s)^{2} &\text{ if $\nu$ is real,} \\
		1+4(t_{\nu}+s)^2 &\text{ if $\nu$ is complex.}	
	\end{cases}
\end{equation}

By \cite{Sc02} the log-conductor of $\pi_{\p}(s) =\chi_{1,\p}(s)\boxplus \chi_{2,\p}(-s)$ is given by $n_{\p} = a(\chi_{1,\red{\p}})+a(\chi_{2,\red{\p}})$. Therefore, the conductor of $\pi(s)$ is $\n = \prod_{\p} \p^{n_{\p}}$. \red{Furthermore, the central character of $\pi_{\p}(s)$ is independent of $s$ and given by $\omega_{\pi(0),\p} =\chi_{1,\p} \chi_{2,\p}$. Its log-conductor is denoted by $m_{p}$. We write $\omega_{\pi(0)}=\chi_1\chi_2$ and $\mathfrak{m}$ for the corresponding global objects.} Next, we want to fix a new vector for $\pi(s)$. Due to the explicit construction of $\pi(s)$ as principal series, we can be very precise. This is important because it normalizes the associated Eisenstein series. Before we continue let us define the character $\chi\colon \red{B(\A_F)}\to \C^{\times}$ by
\begin{equation}
	\chi\left( \left(\begin{matrix} a& b \\ 0 &d\end{matrix} \right)\right) = \chi_1(a)\chi_2(d). \nonumber
\end{equation}
The new vector $v^{\circ}(s)\in \mathbf{H}(s)$ is defined locally by
\begin{eqnarray}
	v^{\circ}_{\nu}(s)(bk) &=& \chi_{\nu}(b)H_{\nu}(\red{b})^{\frac{1}{2}+s} \text{ for all } \nu, \label{eq:arch_new_vec}\\
	v^{\circ}_{\p}(s)(g) &=&  \begin{cases} \chi_{\p}(b)H_{\p}(g)^{\frac{1}{2}+s} &\text{ if } \p\nmid \n \text{ and } g=bk, \\  \omega_{\red{\pi(0),\p}}(\det(g))\red{q_{\p}^{-\frac{a(\chi_{1,\p})}{2}}}f_s(g) &\text{ else.}  \end{cases}  \label{eq:non_arch_new_vec}
\end{eqnarray}
\red{Here} we take $f$ to be the function
\begin{equation}
	f_s(g) =\begin{cases} \chi_{1,\p}(s)(\varpi_{\p}^{-a(\chi_{2\red{,\p}})})\chi_{1,\p}(a)\chi_{2,\p}(d) \abs{\frac{a}{d}}^{\frac{1}{2}+s}_{\p} &\text{ if } g\in \left(\begin{matrix} a& * \\ 0 & d \end{matrix} \right)\left(\begin{matrix} 1&0 \\ \varpi_{\p}^{a(\chi_{2\red{,\p}})} & 1 \end{matrix} \right) K_{2,\p}(n_{\p}), \\ 0 &\text{ else.} \end{cases} \label{eq:def_fs}
\end{equation}
This definition is taken from \cite[Proposition~2.1.2]{Sc02}. Note that we \red{rescaled} it by $\red{q_{\p}^{-\frac{a(\chi_{1,\p})}{2}}}$, and we twisted it by $\omega_{\red{\pi(0),\p}}$ to make it $K_{1,\p}(\red{n}_{\p})$ invariant. Swapping the roles of $\chi_1$ and $\chi_2$ \red{leads to} a completely analogous situation. The corresponding new vector defined as above will be denoted by $\hat{v}^{\circ}$.

Throughout this work we are mainly concerned with Eisenstein series attached to new vectors. Thus, for sake of notation, we set
\begin{equation}
	E(s,g) = E_{v^{\circ}}(s,g). \nonumber
\end{equation}
The dual Eisenstein series 
\begin{equation}
	\hat{E}(s,g) = E_{\hat{v}^{\circ}}(s,g) \nonumber
\end{equation}
will also play an important role. \red{Furthermore, to each $\mathfrak{L}\mid \n$ we associate the new vector $v^{\circ}_{\mathfrak{L}}$ in the (twisted) principal series representation
\begin{equation}
	 \bigg(\pi^{\mathfrak{L}}(s), \mathbf{H}_{\mathfrak{L}}(s)\bigg) =  \bigg(\omega_{\pi}^{-1}\omega_{\pi}^{\mathfrak{L}}\chi_1(s)\boxplus \omega_{\pi}^{-1}\omega_{\pi}^{\mathfrak{L}}\chi_2(-s), \red{\mathcal{B}}(\omega_{\pi}^{-1}\omega_{\pi}^{\mathfrak{L}}\chi_1(s),\omega_{\pi}^{-1}\omega_{\pi}^{\mathfrak{L}}\chi_2(-s))\bigg) \nonumber
\end{equation}
defined locally as in \eqref{eq:arch_new_vec} and \eqref{eq:non_arch_new_vec} above. As in \cite[Section~2.3]{As17_2} the character $\omega_{\pi}^{\mathfrak{L}}$ is the $[\mathfrak{m}]_{\mathfrak{L}}$-part of $\omega_{\pi}$ and as such it is in particular independent of $s$. Important for us is the relation 
\begin{equation} 
	\omega_{\pi,\p}^{-1}\omega_{\pi,\p}^{\mathfrak{L}}\vert_{\op_{\p}^{\times}} = \begin{cases} \omega_{\pi,\p}^{-1}\vert_{\op_{\p}^{\times}} &\text{ if } \p\mid\mathfrak{L},\\ 1 & \text{ if } \p \nmid \mathfrak{L}. \end{cases}
\end{equation}
given in \cite[(2.2)]{As17_1}, which defines the local \redd{constituents} of $\omega_{\pi}^{\mathfrak{L}}$ up to (unitary) unramified twist. These twists can be specified, however this is not  important for the following argument. We associate the Eisenstein series
\begin{equation}
	E^{\mathfrak{L}}(s,g) = E_{v^{\circ}_{\mathfrak{L}}}(s,g). \nonumber
\end{equation}
}

\red{Finally, recall the sets $\mathcal{J}_{\n}\subset G(\A_{\text{fin}})$ and $\mathcal{F}_{\n_2}\subset  \bigsqcup_{1\leq i\leq h_F} a(\theta_i)B(F_{\infty})$ as defined in \cite[Section~2.2]{As17_2}. Recall also that
\begin{equation}
\eta_{\mathfrak{L}} =\prod_{\p\mid \mathfrak{L}}\left(\begin{matrix} 0 & 1 \\ \varpi_{\p}^{n_{\p}} & 0 \end{matrix} \right)\prod_{\p\nmid\mathfrak{L}} 1 \in G(\A_{\text{fin}}). \nonumber 
\end{equation}}

Note that for $s\in i\R$, when the representation underlying $E(s,\cdot)$ is unitary, the Eisenstein series is not an element of $L^2(G(F)\setminus G(\A_F),\red{\omega_{\pi(0)}})$. \red{Nonetheless}, we will be able to use the spectral theory of this space (and some additional tricks) to prove upper bounds for this function.

\subsection{Statement of results}

We are ready to state the main theorem. In contrast to the cusp form case (see \cite{As17_2}) one can not expect uniform bounds which are valid on the whole space. This is due to the presence of a constant term in the Whittaker expansion. However, the asymptotic size of this constant term is well\red{-}understood. Therefore, what we really prove is a bound for $\abs{E(s,g)}$ in terms of the constant term with a uniform error bound. 

\begin{theorem} \label{th:main_th_3}
Let $E(s,\cdot)$ be the Eisenstein series with underlying \red{unitary} characters $\chi_1$ and $\chi_2$. \red{Write $\n=\n_2\n_0^2$ where $\n_2$ is square free and set $\mathfrak{m}_1 = \frac{\mathfrak{m}}{(\mathfrak{m},\n_2\n_0)}$.} Fix $T_0\in \redd{[1,\infty)}$, define $(T_{\nu})_{\nu}=(\max(\frac{1}{2},\abs{t_{\nu}+T_0}))_{\nu}$ and let $\mathfrak{l}$ denote the conductor of $\chi_1\chi_2^{-1}$. \red{We have
\begin{equation}
	\abs{E(iT_0,g)} =  \abs{E^{\mathfrak{L}}(iT_0,a(\theta_i)g')}, \nonumber
\end{equation}
 for some $\mathfrak{L}\mid\n_2$, $1\leq i \leq h$, and some $g'\in \mathcal{J}_{\n}\times\mathcal{F}_{\n_2}$.} If
\begin{equation}
	\log(\No(\n)) \ll \log(\abs{T}_{\infty}) \text{ and } \log(\No(\mathfrak{l}))\ll \log(\abs{T}_{\infty})^{1-\delta} \text{ for some } \delta>0,\nonumber
\end{equation} 
then we have
\begin{eqnarray}
	E(iT_0,\red{a(\theta_i)g'}) &=& [v^{\circ}(iT_0)](\red{a(\theta_i)g'}) + c(iT_0)[\hat{v}^{\circ}(-iT_0)](\red{a(\theta_i)g'}) \nonumber \\
	&&\qquad\qquad +O\bigg(\No(\n)^{\epsilon}\abs{T}_{\infty}^{\epsilon} \No(\n_0\mathfrak{m}_1)^{\frac{1}{2}}\No(\n_2)^{\frac{1}{4}}\left(\abs{T}_{\infty}^{\frac{3}{8}}  +\abs{T}_{\R}^{\frac{3}{16}}\abs{T}_{\C}^{\frac{7}{16}}\right) \nonumber \\
	&&\qquad\qquad\qquad\qquad+ \No(\n)^{\epsilon}\abs{T}_{\infty}^{\epsilon}\No(\n_0)\bigg)\red{,} \nonumber
\end{eqnarray}
\red{for
\begin{equation}
	 c(s) = \No(\mathfrak{d})^{-\frac{1}{2}}c_r(s)\frac{\Lambda(2s,\chi_1\chi_2^{-1})}{\Lambda(2s+1,\chi_1\chi_2^{-1})}. \nonumber
\end{equation}
Here $c_r(s)$, the correction term coming from the ramified places, is given in \eqref{eq:def_of_c_ram} below.}
\end{theorem}

\begin{rem}
\begin{itemize}
\item This theorem generalizes the methods from \cite{Yo15} and \cite{HX16} to number fields. We also implement some ideas from \cite{Sa15_2} in order to deal with arbitrary level and central character.
\item The error term $\No(\n)^{\epsilon}\abs{T}_{\infty}^{\epsilon}\No(\n_0)$ has its origin in Lemma~\ref{lm:Bound_way_to_bad_in_n_0} and can probably removed with some computational effort. However, we did not attempt this here.
\item Suppose $F=\Q$, the central character is trivial, and the level $N$ is square free, then one quickly checks that the conductor of induced representations (which might contribute to the continuous spectrum) $\chi\boxplus \chi^{-1}$ is a perfect square. Thus, in the square free case there is (up to scaling) exactly one Eisenstein series $E_0(s,g)$ induced from a new vector. Applying our theorem to this Eisenstein series recovers the result from \cite{Yo15}. However, any other Eisenstein series transforming with respect to $K_1(N)$ can be produced by linear combinations of translates of $E_0$. Thus, we can also deal with the situation in \cite{HX16}. Since in this case we have $\No(\n_0)=1= \No(\mathfrak{l})$, there are no additional conditions. 
\end{itemize}
\end{rem}

The driving force behind this theorem is the improved amplifier. In contrast to the cusp form case, where the lower bound for the amplifier relies on combinatorial identities between Hecke eigenvalues, we use analytic tools to control the amplifier. Indeed, under some technical assumptions we prove
\begin{equation}
	\sum_{\substack{L\leq \No(\p)\leq 2L,\\ \p\in\mathcal{P}_{\mathfrak{q}}}} \log(\No(\p)) \eta_{\chi_1,\chi_2,it}(\p)\eta_{\chi_1^{-1},\chi_2^{-1},\red{-}ir}(\p) \asymp \frac{L}{\sharp\text{Cl}_F^{\mathfrak{q}}}, \label{eq:rough_statement_div_s}  
\end{equation}
where $\mathcal{P}_{\mathfrak{q}}$ is the set of principal prime \red{ideals with a generator congruent to} 1 modulo $\mathfrak{q}$ and
\begin{equation}
	\eta_{\chi_1,\chi_2,it}(\mathfrak{m}) = \sum_{\mathfrak{ab}=\mathfrak{m}} \chi_1(it)(\mathfrak{a})\chi_2(-it)(\mathfrak{b}) \nonumber
\end{equation}
is a generalized divisor sum. The precise statement can be found in Lemma~\ref{lm:low_amp_eis} below. The upshot of this result is that it enables us to choose a significantly shorter amplifier. However, it relies on extended zero\red{-}free regions for Hecke $L$-functions. To the best of our knowledge such zero\red{-}free regions do not yet exists in full uniformity. This is the reason for several technical assumptions on $L$, $\chi_1$, $\chi_2$, $r$ and $t$ in \eqref{eq:rough_statement_div_s} which ultimately \red{lead} to the caveat 
\begin{equation}
	\log(\No(\n)) \ll \log(\abs{T}_{\infty}) \text{ and } \log(\No(\mathfrak{l}))\ll \log(\abs{T}_{\infty})^{1-\delta} \text{ for some } \delta>0.\nonumber
\end{equation} 
In several special cases these assumptions may be relaxed, but we do not know how to get rid of them in general.

If one wants to avoid dealing with an explicit normalization one, can pose the sup-norm problem for Eisenstein series in a slightly different form. Indeed one can fix a \red{Jordan measurable} compact set $K\subset Z(\A_F)\setminus G(\A_F)$ \red{with non-empty interior} and study the quotients
\begin{equation}
	\frac{\sup_{g\in K}\abs{E(iT_0,g)}}{\left(\int_K \abs{E(iT_0,g)}^2 dg\right){\frac{1}{2}}}. \nonumber
\end{equation}
This is very similar in spirit to the way quantum unique ergodicity is studied for Eisenstein series. Indeed Quantum unique ergodicity for the Eisenstein case usually takes the form
\begin{equation}
	\lim_{T_0\to \infty}\frac{\int_{K_1} \abs{E(iT_0,g)}^2 dg}{\int_{K_2} \abs{E(iT_0,g)}^2 dg} = \frac{\vol(K_1)}{\vol(K_2)}. \label{eq:QUE_eisenstein}
\end{equation}
This is not yet known for ramified Eisenstein series over number fields. To the best of our knowledge \cite{Tr11} is the most general result to date. Let us state a nice corollary of our main theorem assuming quantum unique ergodicity in our setting.
 
\begin{cor}
If we assume \eqref{eq:QUE_eisenstein} for the Eisenstein series $E$\red{,} then we have
\begin{eqnarray}
	&&\frac{\sup_{g\in K}\abs{E(iT_0,g)}}{\left(\int_K \abs{E(iT_0,g)}^2 dg\right){\frac{1}{2}}} \ll \vol(K)^{-\frac{1}{2}}\sup_{g\in \red{G(F)KK_1(\n)}}\left(\abs{[v^{\circ}(iT_0)](g)} + \abs{c(iT_0)[\hat{v}^{\circ}(-iT_0)](g)}\right)  \nonumber \\
	&&+ \vol(K)^{-\frac{1}{2}}\left(\No(\n)^{\epsilon}\abs{T}_{\infty}^{\epsilon} \No(\n_0\mathfrak{m}_1)^{\frac{1}{2}}\No(\n_2)^{\frac{1}{4}}\left(\abs{T}_{\infty}^{\frac{3}{8}}+\abs{T}_{\R}^{\frac{3}{16}}\abs{T}_{\C}^{\frac{7}{16}}\right)+ \No(\n)^{\epsilon}\abs{T}_{\infty}^{\epsilon}\No(\n_0) \right), \nonumber 
\end{eqnarray}
for any \red{Jordan measurable} compact set $K\subset Z(\A_F)\setminus G(\A_F)$ \red{with non-empty interior}. \red{In particular, for $T_0\gg_K 1$ we have
\begin{equation}
	\frac{\sup_{g\in K}\abs{E(iT_0,g)}}{\left(\int_K \abs{E(iT_0,g)}^2 dg\right){\frac{1}{2}}} \ll_K \No(\n)^{\epsilon}\abs{T}_{\infty}^{\epsilon}\left( \No(\n_0\mathfrak{m}_1)^{\frac{1}{2}}\No(\n_2)^{\frac{1}{4}}\left(\abs{T}_{\infty}^{\frac{3}{8}}+\abs{T}_{\R}^{\frac{3}{16}}\abs{T}_{\C}^{\frac{7}{16}}\right)+ \No(\n_0)\right). \nonumber
\end{equation}}
\end{cor}

\section{The reduction step} \label{sec:reduction_step}

In \cite[Proposition~2.1]{As17_2} we established the following generating domain for $G(\A_F)$.

\begin{prop}  \label{pr:generating_domain_from_part_1}
For $g\in G(\A_F)$ we find $\mathfrak{L}\vert \n_2$ and $1\leq i\leq h_F$ such that
\begin{equation}
	g\in Z(\A)G(F) \left( a(\theta_i) \mathcal{J}_{\n} \times \mathcal{F}_{\n_2} \right)\eta_{\mathfrak{L}}K_1(\n). \nonumber
\end{equation}
\end{prop}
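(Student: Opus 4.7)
The plan is to combine the Iwasawa decomposition at every place with strong approximation and the description of the idele class group via $\mathrm{Cl}_F$. I would first decompose $g\in G(\A_F)$ place by place. At an archimedean place the factor lies in $NAK_{\infty}$ by Iwasawa; at a non-archimedean place $\p\nmid \n$, the same decomposition $G(F_\p)=B(F_\p)K_\p$ applies and $K_\p=K_{1,\p}(\n)$, so the right $K$-factor is absorbed into $K_1(\n)$.

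At a ramified prime $\p\mid \n$ I would use the finer double coset decomposition
\begin{equation}
G(F_\p) \;=\; \bigsqcup_{\mathfrak{L}_\p\mid \n_{2,\p}} B(F_\p)\,\eta_{\mathfrak{L}_\p}\,K_{1,\p}(\n), \nonumber
\end{equation}
where the $\eta_{\mathfrak{L}_\p}$ are Atkin--Lehner type representatives and only divisors of the ``symmetric'' part $\n_{2,\p}$ of the local conductor occur. Globalising, the finite part of $g$ lies in $B(\A_F)\,\eta_{\mathfrak{L}}\,K_1(\n)$ for some $\mathfrak{L}\mid \n_2$. Next I would push the Borel factor to the left and partially cancel it against $Z(\A_F)G(F)$ on the outside. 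The diagonal torus factor is an idele, and by
\begin{equation}
F^{\times}\backslash \A_F^{\times}/\hat{\Of}^{\times} F_{\infty,+}^{\times} \;\cong\; \mathrm{Cl}_F, \nonumber
\end{equation}
it is represented, modulo $F^{\times}$, the center, and the maximal compact, by $a(\theta_i)$ for some $1\leq i\leq h_F$. The unipotent factor at the finite places, reduced modulo $F$, lands in $\mathcal{F}_{\n_2}$, while the remaining archimedean $NA$-part is folded into the fundamental domain $\mathcal{J}_{\n}$ by the discrete subgroup $G(F)\cap a(\theta_i)\eta_{\mathfrak{L}}K_1(\n)\eta_{\mathfrak{L}}^{-1} a(\theta_i)^{-1}$ (or a suitable conjugate).

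The main obstacle is the coherent bookkeeping across the ramified primes: one has to verify that the locally chosen Atkin--Lehner representatives $\eta_{\mathfrak{L}_\p}$ can be assembled into a single global element $\eta_{\mathfrak{L}}$, and that the ensuing fundamental-domain construction at the archimedean places is compatible with both the class-group representative $\theta_i$ and the divisor $\mathfrak{L}$. Since the identical statement has already been established in \cite[Section~2.2]{As17_2} --- and the argument there makes no use of the cuspidal nature of the underlying section $v$ --- I would simply invoke that reference, noting that the construction of the generating domain depends only on the group $G$ and the congruence subgroup $K_1(\n)$, so it carries over verbatim to the Eisenstein series setting.
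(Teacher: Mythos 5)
Your proposal ultimately reaches the correct conclusion, which is exactly what the paper does: the proposition is simply imported verbatim from \cite[Section~2.2]{As17_2}, since the construction of the generating domain depends only on $G$ and the congruence subgroup $K_1(\n)$ and is independent of whether the section is cuspidal or Eisenstein. Your preliminary sketch of the double-coset decomposition at ramified primes is heuristically plausible but not needed, since — as you correctly note at the end — invoking the reference is the intended argument.
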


Furthermore, in \cite[Section~2.3]{As17_2}, we investigated the action of $\eta_{\mathfrak{L}}$ on cuspidal newforms. In the case of Eisenstein series we will use a similar argument. However, me must ensure that our explicit choice of new vector is preserved. \red{We will now proof the following reduction result, which settles the first part of the theorem.}

\begin{lemma} \label{lm:Eisenstein_reduction}
\red{Let $s\in i\R$ and $g\in G(\A_F)$. Then there is $\mathfrak{L}\mid\n_2$, $1\leq i\leq h_{\redd{F}}$, and $g'\in \mathcal{J}_{\n}\times \mathcal{F}_{\n_2}$ such that
\begin{equation}
	\abs{E(s,g)} =  \abs{E^{\mathfrak{L}}(s,a(\theta_i)g')}. \nonumber
\end{equation}}
\end{lemma}
This is the correct analogue of \cite[Corollary~2.2]{As17_2} taking our normalization of Eisenstein series into account. 
\begin{proof}
There is a canonical isomorphism 
\begin{equation}
	\bigg((\omega_{\pi}^{-1}\omega_{\pi}^{\mathfrak{L}})\pi(s),\mathbf{H}(s)\bigg) \to \bigg(\pi^{\mathfrak{L}}(s), \mathbf{H}_{\mathfrak{L}}(s)\bigg) \nonumber
\end{equation}
given by
\begin{equation}
	\mathbf{H}(s) \ni h \mapsto \tilde{h} = [\omega_{\pi}^{-1}\omega_{\pi}^{\mathfrak{L}}](\det(\cdot))h \in \mathbf{H}_{\mathfrak{L}}(s). \nonumber
\end{equation}
By \cite[Lemma~2.4]{As17_2} the vector $\pi(s)(\eta_{\mathfrak{L}})\tilde{v}^{\circ}$ is new. Thus, by multiplicity one, we have to check that \red{
\begin{equation}
	\abs{[\pi(s)(\eta_{\mathfrak{L}})v^{\circ}(s)](g)}=\abs{[\pi^{\mathfrak{L}}(s)(\eta_{\mathfrak{L}})\tilde{v}^{\circ}(s)](g)} = \abs{[v^{\circ}_{\mathfrak{L}}(s)](g)} \text{ for all $g\in G(\A_F)$}. \nonumber
\end{equation}
The first equality follows straight from the definition and unitarity of the characters involved. To show the second equality we have to work harder.} We will do so by checking this place by place. 

First, note that we only have to consider $\p\mid \mathfrak{L}$, since otherwise we only deal with unitary, unramified twists. Therefore, let $\p\mid \mathfrak{L}$. \red{Because we already know that $\pi_{\red{\p}}^{\red{\mathfrak{L}}}(s)(\eta_{\mathfrak{L},\red{\p}})\tilde{v}_{\red{\p}}^{\circ}(s)$ is a multiple of $v^{\circ}_{\mathfrak{L},\p}(s)$, we only have to compare their absolute value at some specific matrix.} We calculate
\begin{eqnarray}
	[\pi_{\red{\p}}^{\red{\mathfrak{L}}}(s)(\eta_{\mathfrak{L},\red{\p}})\tilde{v}_{\red{\p}}^{\circ}(s)]\left(\left(\begin{matrix} 1&0\\ \varpi_{\p}^{a(\omega_{\pi,\p}^{-1}\omega_{\pi,\p}^{\mathfrak{L}}\chi_{2,\p})}&1\end{matrix}\right)\right) &=& \red{\omega_{\pi,\p}^{\mathfrak{L}}(\varpi_{\p}^{n_{\p}})}\red{q_{\p}^{-\frac{a(\chi_{1,\p})}{2}}}f_s\left(\left(\begin{matrix} 0&1\\ \varpi_{\p}^{n_{\p}} & \varpi_{\p}^{a(\chi_{1,\p})} \end{matrix}\right)\right). \nonumber
\end{eqnarray}
One can write
\begin{equation}
	\left(\begin{matrix} 0&1\\ \varpi_{\p}^{n_{\p}} & \varpi_{\p}^{a(\chi_{1,\p})} \end{matrix}\right) = \left(\begin{matrix} -\varpi_{\p}^{n_{\p}-a(\chi_{1,\p})} & 1 \\ 0 & \varpi_{\p}^{a(\chi_{1,\p})} \end{matrix}\right)\left( \begin{matrix}1&0\\ \varpi_{\p}^{n_{\p}-a(\chi_{1,\p})}&1 \end{matrix}\right). \nonumber
\end{equation}
With this decomposition \red{and \eqref{eq:def_fs}} we evaluate
\red{\begin{equation}
	f_s\left(\left(\begin{matrix} 0&1\\ \varpi_{\p}^{n_{\p}} & \varpi_{\p}^{a(\chi_{1,\p})} \end{matrix}\right)\right) = \chi_{1,\p}(s)(-1)\chi_{2,\p}(-s)(\varpi_{\p}^{a(\chi_{1,\p})})q_{\p}^{\frac{1}{2}(a(\chi_{1,\p})-a(\chi_{2,\p}))}. \nonumber
\end{equation}}
\red{\redd{Altogether} this yields
\begin{multline}
	[\pi_{{\p}}^{{\mathfrak{L}}}(s)(\eta_{\mathfrak{L},{\p}})\tilde{v}_{{\p}}^{\circ}(s)]\left(\left(\begin{matrix} 1&0\\ \varpi_{\p}^{a(\omega_{\pi,\p}^{-1}\omega_{\pi,\p}^{\mathfrak{L}}\chi_{2,\p})}&1\end{matrix}\right)\right) \\ =  {\omega_{\pi,\p}^{\mathfrak{L}}(\varpi_{\p}^{n_{\p}})}\chi_{1,\p}(-1)\chi_{2,\p}(-s)(\varpi_{\p}^{a(\chi_{1,\p})})q_{\p}^{-\frac{a(\chi_{2,\p})}{2}}. \nonumber
\end{multline}
}

\red{The claim $\abs{\pi^{\mathfrak{L}}(s)(\eta_{\mathfrak{L}})\tilde{v}^{\circ}(s)} = \abs{v^{\circ}_{\mathfrak{L}}(s)}$ now follows from
\begin{equation}
	[v^{\circ}_{\mathfrak{L},\p}(s)]\left(\left(\begin{matrix} 1&0\\ \varpi_{\p}^{a(\omega_{\pi,\p}^{-1}\omega_{\pi,\p}^{\mathfrak{L}}\chi_{2,\p})}&1\end{matrix}\right)\right) = [\omega_{\pi,\p}^{-1}\omega_{\pi,\p}^{\mathfrak{L}}\chi_{1,\p}(s)](\varpi_{\p}^{-a(\chi_{1,\p})})q_{\p}^{-\frac{a(\chi_{2,\p})}{2}}, \nonumber
\end{equation}
 together with $s\in i\R$ and the fact that $\omega_{\pi,\p}^{-1}\omega_{\pi,\p}^{\mathfrak{L}}\chi_{1,\p}(s)$ is a unitary unramified twist of $\chi_{2,\p}^{-1}(s)$.

Finally, we observe that due to $F^{\times}$-invariance of $\omega_{\pi}^{-1}\omega_{\pi}^{\mathfrak{L}}$ and \eqref{eq:averaging_def} we have
\begin{equation}
	E_{\tilde{v}}(s,g) = [\omega_{\pi}^{-1}\omega_{\pi}^{\mathfrak{L}}](\det(g))E_{v}(s,g). \nonumber
\end{equation}
Thus, our computations imply that $\abs{E(s,g\eta_{\mathfrak{L}})} = \abs{E^{\mathfrak{L}}(s,g)}$. With this at hand the claimed result follows directly from the generating domain given in Proposition~\ref{pr:generating_domain_from_part_1}.}
\end{proof}

\section{Bounds via Whittaker expansions} \label{sec:nounds_via_Whittaker}

The Whittaker expansion of $E(s,g)$ is given by
\begin{equation}
	E(s,g) = v^{\circ}(s)(g) + [M(s)v^{\circ}(s)](g)+ \sum_{q\in F^{\times}} W_s(a(q)g), \nonumber
\end{equation}
where
\begin{eqnarray}
	[M(s)v^{\circ}(s)](g) &=& \int_{N(\A_F)} v^{\circ}(s)(\omega ng)dn \text{ and }\nonumber \\
	 W_s(g) &=& \frac{2^{r_2}}{\sqrt{\abs{d_F}}} \int_{\A_F} v^{\circ}(s)(\omega n(x) g) \psi(-x) d\mu_{\A_F}(x). \nonumber
\end{eqnarray}
If the integral representation\red{s} for $M(s)$ \red{and $W_s$ do} not converge\red{,} we understand \red{them} by \red{their} analytic continuation. 

We will start by making this expansion as explicit as possible. After doing so we put it to use and derive several useful bounds.

\subsection{The constant term of $E(s,g)$} \label{app:const_term}

The goal of this section is to evaluate $[M(s)v^{\circ}(s)](g)$. First, we observe that $[M(s)v^{\circ}(s)]\in B(\chi_2(-s),\chi_1(s))$. Second, $M(s)v^{\circ}(s)$ is $K_1(\n)$ invariant. Therefore, we have
\begin{equation}
	[M(s)v^{\circ}(s)](g) = c(s) \hat{v}^{\circ}(-s)(g). \label{eq:everything_about_cs}
\end{equation}
We will now continue to calculate the constant $c(s)$. To do so we exploit that $M(s)$ factorizes into $p$-adic integrals. These integrals can be evaluated locally.

While the operator $M(s)$ is defined globally by the integral representation only for $\Re(s)>\frac{1}{2}$, the local integrals converge as long as $\Re(s)>0$. Thus, throughout the rest of this subsection we assume $\Re(s)>0$.

Let us start with archimedean $\nu$. Here we use the decomposition
\begin{equation}
	\left( \begin{matrix} 0 & -1 \\ 1 & x \end{matrix} \right)  = \left( \begin{matrix} \frac{1}{\sqrt{\abs{x}^2+1}} & \frac{-\overline{x}}{\sqrt{\abs{x}^2+1}} \\ 0 &\sqrt{\abs{x}^2+1}  \end{matrix} \right)\left( \begin{matrix} \frac{\overline{x}}{\sqrt{\abs{x}^2+1}}& \frac{-1}{\sqrt{\abs{x}^2+1}} \\ \frac{1}{\sqrt{\abs{x}^2+1}} & \frac{x}{\sqrt{\abs{x}^2+1}} \end{matrix} \right). \label{eq:special_iwasa_acrch}
\end{equation} 
This holds for real as well as complex $x$. By  \eqref{eq:arch_new_vec} we have $\hat{v}^{\circ}_{\nu}(-s)(1)=1$. Therefore, the local contribution to $c(s)$ coming from $\nu$ is simply given by $M(s)v_{\nu}^{\circ}(s)(1)$.
For real $\nu$ we compute
\begin{eqnarray}
	c_{\nu}(s) &=& \int_{\R} v^{\circ}_{\nu}(s)(-\omega n(x))d\mu_{\nu}(x) = \int_{\R} \frac{dx}{(x^2+1)^{s+it_{\nu}+\frac{1}{2}}} = \sqrt{\pi}\frac{\Gamma(s+it_{\nu})}{\Gamma(s+it_{\nu}+\frac{1}{2})} \nonumber \\
	&=& \frac{\Gamma_{\R}(2(s+it_{\nu}))}{\Gamma_{\R}(2(s+it_{\nu})+1)} = \frac{L_{\nu}(2s,\chi_{1,\nu}\chi_{2,\nu}^{-1})}{L_{\nu}(2s+1,\chi_{1,\nu}\chi_{2,\nu}^{-1})}. \nonumber
\end{eqnarray}
If $\nu$ is complex, we argue similarly. One checks
\begin{eqnarray}
	c_{\nu}(s) &=& \int_{\C} v^{\circ}_{\nu}(s)(-\omega n(z))d\mu_{\nu}(z) = \int_{\C} \frac{dz}{(\abs{z}^2+1)^{2s+2it_{\nu}+1}} = 2\pi  \int_0^{\infty} \frac{r}{(r^2+1)^{2s+2it_{\nu}+1}}dr \nonumber \\
	&=& \frac{2\pi}{4s+4it_{\nu}} = \frac{\Gamma_{\C}(2s+2it_{\nu})}{2\Gamma_{\C}(2s+2it_{\nu}+1)} = \frac{L_{\nu}(2s,\chi_{1,\nu}\chi_{2,\nu}^{-1})}{2L_{\nu}(2s+1,\chi_{1,\nu}\chi_{2,\nu}^{-1})} . \nonumber
\end{eqnarray}

Now we turn to the non-archimedean places. Note that since we assumed $\Re(s)>0$ we have $\abs{\chi_{1,\p}(s)(\varpi_{\p})} < \abs{\chi_{2,\p}(-s)(\varpi_{\p})}$. By \cite[Proposition~4.5.6]{Bu96} the integral representation for $M_{\p}$ is defined. 

First, we consider $\p \nmid \n$. In this situation we have $\chi_{1,\p}(s)=\abs{\cdot}_{\p}^{s+it_{\p,1}}$ and $\chi_{2,\p}(s)=\abs{\cdot}_{\p}^{-s+it_{\p,2}}$ for some $t_{\p,i}\in \R$. Then \cite[Proposition~4.6.7]{Bu96} yields
\begin{equation}
	M_{\p}v_{\p}^{\circ}(s) = \underbrace{\frac{L_{\p}(2s,\chi_{1,\p}\chi_{2,\p}^{-1})}{L_{\p}(2s+1,\chi_{1,\p}\chi_{2,\p}^{-1})}}_{=c_{\p}(s)}\hat{v}_{\p}^{\circ}(-s). \nonumber
\end{equation}

At last, we deal with the places $\p\mid \n$. Observe that 
\begin{equation}
	\red{\hat{v}_{\p}^{\circ}(-s) \left(\left( \begin{matrix} 1 & 0 \\ \varpi_{\p}^{a(\chi_{1,\p})} & 1 \end{matrix}\right)\right) = q_{\p}^{-\frac{a(\chi_{2,\p})}{2}}\chi_{2,\p}(-s)(\varpi_{\p}^{-a(\chi_{1,\p})})}.\nonumber
\end{equation}
Therefore, we need to evaluate
\begin{equation}
	c_{\p}(s) = M(s)v^{\circ}_{\p}(s)\left(\left( \begin{matrix} 1 & 0 \\ \varpi_{\p}^{a(\chi_{1,\p})} & 1 \end{matrix}\right)\right)\red{q_{\p}^{\frac{a(\chi_{2,\p})}{2}}\chi_{2,\p}(-s)(\varpi_{\p}^{a(\chi_{1,\p})})}. \nonumber
\end{equation}
Using the identity
\begin{equation}
	-\omega n(x) = \left(\begin{matrix}x^{-1}&-1 \\ 0 &x \end{matrix}\right)\left(\begin{matrix} 1&0\\ x^{-1} & 1\end{matrix}\right),\label{eq:non_arch_iwa}
\end{equation}
for $x\neq 0$, we can calculate
\begin{align}
	c_{\p}(s) &= \red{q_{\p}^{\frac{a(\chi_{2,\p})}{2}}\chi_{2,\p}(-s)(\varpi_{\p}^{a(\chi_{1,\p})})}\int_{F_{\p}}  v_{\p}^{\circ}(s)\left( -\left( \begin{matrix} 0&-1 \\ 1&x \end{matrix}\right) \left(\begin{matrix} 1 & 0 \\ \varpi_{\p}^{a(\chi_{1,\p})} & 1 \end{matrix}\right)\right) d\mu_{\p}(x)   \nonumber \\
	 &=  \red{q_{\p}^{\frac{a(\chi_{2,\p})}{2}}\chi_{2,\p}(-s)(\varpi_{\p}^{a(\chi_{1,\p})})}\int_{F_{\p}^{\times}}  v_{\p}^{\circ}(s)\left( -\left( \begin{matrix}x^{-1}&-1 \\ 0&x \end{matrix}\right) \left(\begin{matrix} 1 & 0 \\ x^{-1}+\varpi_{\p}^{a(\chi_{1,\p})} & 1 \end{matrix}\right)\right) d\mu_{\p}(x)  \nonumber \\
	&=\omega_{\pi(\red{0}),\p}(-1)\red{q_{\p}^{\frac{a(\chi_{2,\p})-a(\chi{1,\p})}{2}}\chi_{2,\p}(-s)(\varpi_{\p}^{a(\chi_{1,\p})})} \nonumber \\
	&\qquad\qquad\qquad\qquad\qquad \cdot \int_{F_{\p}^{\times}}\frac{\chi_{2,\p}(x)}{\chi_{1,\p}(x) }\abs{x}_{\p}^{-2s} f_s \left(\left(  \begin{matrix} 1&0 \\ x^{-1}+\varpi_{\p}^{a(\chi_{1,\p})}& 1 \end{matrix} \right)\right)\frac{d\mu_{\p}(x)}{\abs{x}_{\p}}.\label{eq:starting_poinmt_for_zeta} %\nonumber \\
	%&= \zeta_{\p}(1)^{-1}\omega_{\pi(s),\p}(-1) \chi_{1,\p}(s)(\varpi_{\p}^{a(\chi_{2,\p})})\int_{F_{\p}^{\times}}\frac{\chi_{1,\p}(x)}{\chi_{2,\p}(x) }\abs{x}_{\p}^{2s} f_s \left(\left(  \begin{matrix} 1&0 \\ x+\varpi_{\p}^{a(\chi_{1,\p})}& 1 \end{matrix} \right)\right)d\mu_{\p}^{\times}(x). \nonumber
\end{align}
Fairly standard manipulations yield
\begin{eqnarray}
	c_{\p}(s) &=& \zeta_{\p}(1)^{-1}\red{q_{\p}^{\frac{a(\chi_{2,\p})-a(\chi{1,\p})}{2}}}\omega_{\pi(\red{0}),\p}(-1) \sum_{m\in \Z} q_{\p}^{-2sm\red{+}a(\chi_{\red{1},\p})s}\chi_{1,\p}(\varpi_{\p}^{m})\chi_{2,\p}(\varpi_{\p}^{\red{a(\chi_{1,\p})}-m}) \nonumber\\
	&&\qquad\qquad \cdot \int_{\op_{\p}^{\times}} \frac{\chi_{1,\p}(x)}{\chi_{2,\p}(x)} f_s\left( \left( \begin{matrix} 1&0 \\ \varpi_{\p}^{m}x+\varpi_{\p}^{a(\chi_{1,\p})} & 1 \end{matrix} \right) \right) d\mu_{\p}^{\times}(x). \label{eq:sum_we_arrive_at}
\end{eqnarray}
We will use explicit values for $f_{\red{s}}$ given in \cite{Sc02} to evaluate the remaining integrals. The argument splits into several cases.

First, we consider $\chi_{1,\p}$ to be unramified, in particular $a(\chi_{2,\p})>0$. One observes
\begin{equation}
	v_{\p}(\varpi_{\p}^{m}x+1) = \begin{cases} 0 &\text{ if } m>0, \\ v_{\p}(x+1) &\text{ if } m=0, \\ m &\text{ if } m<0. \end{cases} \nonumber
\end{equation}
Inserting \cite[(22)]{Sc02} in \eqref{eq:sum_we_arrive_at} yields
\begin{eqnarray}
	c_{\p}(s) &=& \zeta_{\p}(1)^{-1}\red{q_{\p}^{\frac{a(\chi_{2,\p})}{2}}\chi_1(s)(\varpi_{\p}^{-a(\chi_{2,\p})})}\int_{-1+\varpi_{\p}^{a(\chi_{2,\p})}\op_{\p}}\chi_{2,\p}(-x)^{-1}d\mu_{\p}^{\times}(x) \nonumber \\ 
	&=& \red{\chi_{1,\p}(\varpi_{\p}^{-a(\chi_{2,\p})})} q_{\p}^{\red{(s-\frac{1}{2})n_{\p}}}. \nonumber
\end{eqnarray}

Second, we consider $\chi_{2,\p}$ to be unramified. In this case we have 
\begin{equation}
	v_{\p}(\varpi_{\p}^{m}x+\varpi_{\p}^{a(\chi_{1,\p})}) = \begin{cases} m &\text{ if } m<a(\chi_{1,\p}), \\ a(\chi_{1,\p}) + v_{\p}(x+1) &\text{ if } m=a(\chi_{1,\p}), \\ a(\chi_{1,\p}) &\text{ if } m>a(\chi_{1,\p}). \end{cases} \label{eq:valuation_of_dirt}
\end{equation}
Thus, after applying \cite[(23)]{Sc02} we obtain
\begin{align}
	c_{\p}(s) &= \zeta_{\p}(1)^{-1} \red{q_{\p}^{-\frac{a(\chi_{1,\p})}{2}}}\sum_{m\leq 0} q_{\p}^{\red{a(\chi_{1,\p})s}-2sm}\chi_{1,\p}(\varpi_{\p}^{m})\chi_{2,\p}(\varpi_{\p}^{\red{a(\chi_{1,\p})}-m}) \int_{\op_{\p}^{\times}} \chi_{1,\p}(-x) \nonumber \\
	&\quad \cdot  \chi_{1,\p}(s)(\varpi_{\p}^{m}x+\varpi_{\p}^{a(\chi_{1,\p})})^{-1} \chi_{2,\p}(\red{-}s)(\varpi_{\p}^{m}x+\varpi_{\p}^{a(\chi_{1,\p})})\abs{\varpi_{\p}^{m}x+\varpi_{\p}^{a(\chi_{1,\p})}}^{-1}_{\p} d\mu_{\p}^{\times}(x) \nonumber \\
	&= \zeta_{\p}(1)^{-1} \red{\chi_{2,\p}(\varpi_{\p}^{a(\chi_{1,\p})})q_{\p}^{-\frac{a(\chi_{1,\p})}{2}}}\sum_{m\leq 0} q_{\p}^{m+\red{a(\chi_{1,\p})s}} \int_{\op_{\p}^{\times}} \chi_{1,\p}(-x) \chi_{1,\p}(x+\varpi_{\p}^{a(\chi_{1,\p})-m})^{-1} d\mu_{\p}^{\times}(x) \nonumber \\
	&= \chi_{1,\p}(-1)\red{\chi_{2,\p}(\varpi_{\p}^{a(\chi_{1,\p})})q_{\p}^{(s-\frac{1}{2})n_{\p}}.} \nonumber
\end{align}

Summarizing the first two cases gives
\begin{equation}
	c_{\p}(s) = \chi_{1,\p}(-\red{\varpi_{\p}^{-a(\chi_{2,\p})}})\red{\chi_{2,\p}(\varpi_{\p}^{a(\chi_{1,\p})})q_{\p}^{(s-\frac{1}{2})n_{\p}}} \label{eq:summary_degen_cases}
\end{equation}
if either $a(\chi_{1,\p})=0$ or $a(\chi_{2,\p})=0$.

Finally, we need to consider the situation where $\chi_{1,\p}$ and $\chi_{2,\p}$ are both ramified. Define 
\begin{eqnarray}
	\chi &=& \chi_{1,\p}\chi_{2,\p}^{-1},\text{ and} \nonumber \\
	h(x) &=& f_s\left( \left( \begin{matrix} 1&0 \\ x+\varpi_{\p}^{a(\chi_{1,\p})} & 1 \end{matrix} \right) \right). \nonumber
\end{eqnarray}
We will exploit the local functional equation. Recall
\begin{eqnarray}
	Z(s,h,\chi) &=& \zeta_{\p}(1)^{-1}\int_{F_{\p}^{\times}} \abs{x}^{s}\chi(x) h(x) d\mu_{\p}^{\times}(x), \nonumber\\
	\hat{h}(y) &=& \int_{F_{\p}} h(x) \psi_{\p}'(xy) d\mu_{\p}(x), \nonumber
\end{eqnarray}
where $\psi_{\p}' = \psi_{\p}(\cdot \varpi_{\p}^{-d_{\p}})$ is an additive character with $n(\psi_{\p}')=0$. Thus, the measure $\mu_{\p}$  is the self-dual measure for the Fourier transform defined above. Therefore, we have the local functional equation
\begin{equation}
	\frac{Z(1-s, \hat{h},\chi^{-1})}{Z(s,h,\chi)} = \epsilon_{\p}(s,\chi,\psi_{\red{\p}}') \frac{L_{\p}(1-s,\chi^{-1})}{L_{\p}(s,\chi)}. \nonumber
\end{equation}
Having a close look at \eqref{eq:starting_poinmt_for_zeta} we observe that
\begin{align}
	&c_{\p}(s) =\omega_{\pi(s),\p}(-1)\red{ q_{\p}^{\frac{a(\chi_{2,\p})-a(\chi_{1,\p})}{2}}\chi_{2,\p}(-s)(\varpi_{\p}^{a(\chi_{1,\p})})} Z(2s,h,\chi)  \nonumber \\
	&= \omega_{\pi(s),\p}(-1) \red{ q_{\p}^{\frac{a(\chi_{2,\p})-a(\chi_{1,\p})}{2}}\chi_{2,\p}(-s)(\varpi_{\p}^{a(\chi_{1,\p})})} \epsilon_{\p}(2s,\chi,\psi_{\p}')^{-1} \frac{L_{\red{\p}}(2s,\chi)}{L_{\red{\p}}(1-2s,\chi^{-1})} Z(1-2s,\hat{h},\chi^{-1}). \label{eq:cs_via_zeta_int}
\end{align}

We start by computing the Fourier transform of $h$. By \cite[(21)]{Sc02} we have
\begin{eqnarray}
	\hat{h}(y) &=& \int_{F_{\p}}f_{\red{s}}\left( \left( \begin{matrix} 1&0\\ x+\varpi_{\p}^{a(\chi_{1,\p})} & 1 \end{matrix} \right) \right) \psi_{\p}'(xy) d\mu_{\p}(x) \nonumber \\
	&=& \psi_{\p}'(-\varpi_{\p}^{a(\chi_{1,\p})}y) \int_{F_{\p}}f_{\red{s}}\left( \left( \begin{matrix} 1& 0 \\ x& 1\end{matrix} \right) \right) \psi'_{\p}(xy) d\mu_{\p}(x) \nonumber \\
	&=&  \psi_{\p}'(-\varpi_{\p}^{a(\chi_{1,\p})}y) \int_{\varpi_{\p}^{a(\chi_{2,\p})}\op_{\p}^{\times}}[\chi_{1,\p}(s)]^{-1}(x) \psi'_{\p}(xy) d\mu_{\p}(x) \nonumber \\
	&=& \zeta_{\p}(1)^{-1} \psi_{\p}'(-\varpi_{\p}^{a(\chi_{1,\p})}y)q_{\p}^{(s-1)a(\chi_{2,\p})}\chi_{1,\p}(\varpi_{\p}^{-a(\chi_{2,\p})})G(\varpi_{\p}^{a(\chi_{2,\p})}y,\chi_{1,\p}^{-1}).  \nonumber 
\end{eqnarray}
Here we used the definition of the Gau\ss\  sum given in \cite{Sa15_2}. We evaluate this explicitly in terms of $\epsilon$-factors using \cite[(6)]{Sa15_2}. This yields
\begin{equation}
	\hat{h}(y)  = \begin{cases}
		\psi_{\p}'(-\varpi_{\p}^{a(\chi_{1,\p})}y) \chi_{1,\p}(y) \epsilon_{\p}(\frac{1}{2},\chi_{1,\p},\psi_{\p}') q_{\p}^{(s-1)a(\chi_{2,\p})-\frac{1}{2}a(\chi_{1,\p})} 	&\text{ if } v_{\p}(y) = -n_{\p}, \\
		0 &\text{ else.}
	\end{cases} \nonumber
\end{equation}
The $Z$-integral boils down to another Gau\ss\  sum. We have
\begin{eqnarray}
	Z(1-2s,\hat{h},\chi^{-1}) &=&\zeta_{\p}(1)^{-1} q_{\p}^{(s-1)a(\chi_{2,\p})-\frac{1}{2}a(\chi_{1,\p})}\epsilon_{\p}(\frac{1}{2},\chi_{1,\p},\psi'_{\p}) \nonumber\\
	&&\qquad\quad\cdot \int_{\varpi_{\p}^{-a(\chi_{2,\p})-a(\chi_{1,\p})}\op_{\p}^{\times}}\abs{x}_{\p}^{1-2s} \chi_{2,\p}(x) \psi'_{\p}(-\varpi_{\p}^{a(\chi_{1,\p})}y)d\mu_{\p}^{\times}(x) \nonumber \\
	&=& \zeta_{\p}(1)^{-1} q_{\p}^{-sa(\chi_{2,\p})+(\frac{1}{2}-2s)a(\chi_{1,\p})}\chi_{2,\p}(\varpi_{\p}^{-a(\chi_{2,\p})-a(\chi_{1,\p})}) \nonumber \\
	&&\qquad\quad \cdot\epsilon_{\p}(\frac{1}{2},\chi_{1,\p},\psi'_{\p}) G(-\varpi_{\p}^{-a(\chi_{2,\p})},\chi_{2,\p}) \nonumber \\
	&=&  q_{\p}^{(-\frac{1}{2}-s)a(\chi_{2,\p})+(\frac{1}{2}-2s)a(\chi_{1,\p})} \chi_{2,\p}(-\varpi_{\p}^{-a(\chi_{1,\p})})\epsilon_{\p}(\frac{1}{2},\chi_{1,\p},\psi'_{\p})\epsilon_{\p}(\frac{1}{2},\chi_{2,\p}^{-1},\psi'_{\p}). \nonumber
\end{eqnarray}
Thus, by \eqref{eq:cs_via_zeta_int} we have the formula
\begin{eqnarray}
	c_{\p}(s) &=& \chi_{1,\p}(-1)q_{\p}^{\red{-sn_{\p}}} \frac{ \epsilon_{\p}(\frac{1}{2},\chi_{1,\p},\psi'_{\p})\epsilon_{\p}(\frac{1}{2},\chi_{2,\p}^{-1},\psi'_{\p})}{\epsilon_{\p}(2s,\chi_{1,\p}\chi_{2,\p}^{-1},\psi'_{\p})}\frac{L_{\p}(2s,\chi_{1,\p}\chi_{2,\p}^{-1})}{L_{\p}(1-2s,\chi_{1,\p}^{-1}\chi_{2,\p})}. \nonumber
\end{eqnarray}
\red{Note that this expression is valid for all $\p\mid \n$. Indeed, if exactly one of the characters $\chi_{1,\p}$ is ramified, then the $L$- and $\epsilon$-factor contribution can be evaluated explicitly. One observes that the result fits precisely to \eqref{eq:summary_degen_cases}.}

To gather the global expression of $c(s)$ we first recall that $\No(\mathfrak{d}) = \abs{d_F}$. We define 
\begin{align}
 	\mathfrak{l} &= \prod_{\p}\p^{a(\chi_{1,\p}\chi_{2,\p}^{-1})}, \nonumber \\
	c_{r,\p}\red{(s)} &= \chi_{1,\p}(-1)q_{\p}^{\red{-sn_{p}}-(\frac{1}{2}-2s)a(\chi_{1,\p}\chi_{2,\p}^{-1})}\frac{ \epsilon_{\p}(\frac{1}{2},\chi_{1,\p},\psi'_{\p})\epsilon_{\p}(\frac{1}{2},\chi_{2,\p}^{-1},\psi'_{\p})}{\epsilon_{\p}(\frac{1}{2},\chi_{1,\p}\chi_{2,\p}^{-1},\psi'_{\p})} \frac{L_{\p}(2s+1,\chi_{1,\p}\chi_{2,\p}^{-1})}{L_{\p}(1-2s,\chi_{1,\p}^{-1}\chi_{2,\p})},  \nonumber \\ 
	c_r\red{(s)} &= \prod_{\p\mid \n} c_{r,\p}\red{(s)} \red{= \mathcal{N}(\n)^{-s}\mathcal{N}(\mathfrak{l})^{2s-\frac{1}{2}}\frac{ \epsilon(\frac{1}{2},\chi_{1},\psi)\epsilon(\frac{1}{2},\chi_{2}^{-1},\psi)}{\epsilon(\frac{1}{2},\chi_{1}\chi_{2}^{-1},\psi)} \prod_{\p\mid\n} \chi_{1,\p}(-1) \frac{L_{\p}(2s+1,\chi_{1,\p}\chi_{2,\p}^{-1})}{L_{\p}(1-2s,\chi_{1,\p}^{-1}\chi_{2,\p})}    }. \label{eq:def_of_c_ram}
\end{align}
The constant term of $E(s,g)$ is given  by
\begin{equation}
	v^{\circ}(s)(g) + \No(\mathfrak{d})^{-\frac{1}{2}}c_r(s)\frac{\Lambda(2s,\chi_1\chi_2^{-1})}{\Lambda(2s+1,\chi_1\chi_2^{-1})}\hat{v}^{\circ}(-s)(g). \nonumber
\end{equation}

\subsection{The Whittaker coefficients of $E(s,g)$} \label{app:whitt_f}

In this subsection we compute
\begin{equation}
	W_s(g)=\frac{2^{r_2}}{\sqrt{\abs{d_F}}}\int_{\A_F}v^{\circ}(s)(\omega n(x)g)\psi(-x)d\mu_{\A_f}(x). \nonumber
\end{equation}
These functions will give rise to the coefficients in the Whittaker expansion of $E(s,\cdot)$.

We start with an archimedean place $\nu$. In this case we can clearly restrict our attention to $g=a(y)$ for some $y>0$. One checks
\begin{eqnarray}
	W_{s,\nu}(a(y)) &=& \int_{F_{\nu}} v_{\nu}^{\circ}(s)(-\omega n(x) a(y))\psi_{\nu}(\red{-x})d\mu_{\nu}(x) \nonumber \\
	&=&  \abs{y}_{\nu}^{\frac{1}{2}+it_{\red{\nu,2}}-s} \int_{F_{\nu}} v_{\nu}^{\circ}(s)(-\omega n(x)) \psi_{\nu}(-xy) d\mu_{\nu}(x).\nonumber
\end{eqnarray}
Applying \eqref{eq:arch_new_vec} and \eqref{eq:special_iwasa_acrch} gives
\begin{equation}
	W_{s,\nu}(a(y)) = \abs{y}_{\nu}^{\frac{1}{2}+it_{\red{\nu,2}}-s} \int_{F_{\nu}} \frac{\psi_{\nu}(xy)}{\abs{\abs{x}^2+1}_{\nu}^{s+it_{\nu}+\frac{1}{2}}}d\mu_{\nu}(x). \nonumber
\end{equation}

For $\nu$ \red{real we use \cite[3.771~(2)]{GR07} to compute}
\begin{eqnarray}
	W_{s,\nu}(a(y)) &=& \abs{y}_{\nu}^{\frac{1}{2}+it_{\red{\nu,2}}-s}  \int_{\R} \frac{e^{\red{2\pi i xy}}}{(\abs{x}^2+1)^{s+it_{\nu}+\frac{1}{2}}}d\mu_{\nu}(x) \nonumber \\
	&=&  2\abs{y}_{\nu}^{\frac{1}{2}+it_{\red{\nu,2}}-s}  \int_{0}^{\infty} \frac{\cos(2\pi  xy)}{(\abs{x}^2+1)^{s+it_{\nu}+\frac{1}{2}}}d\mu_{\nu}(x) \nonumber \\ 
	&=&  \red{2}\abs{y}_{\nu}^{\frac{1}{2}+\frac{1}{2}is_{\nu}}\Gamma_{\R}(2(s+it_{\nu})+1)^{-1} K_{s+it_{\nu}}(2\pi y).\nonumber 
\end{eqnarray}

For $\nu$ complex we have
\begin{eqnarray}
	W_{s,\nu}(a(y)) &=& \abs{y}_{\nu}^{\frac{1}{2}+it_{\red{\nu,2}}-s} \int_{\C} \frac{e^{4\pi i y \Re(x)}}{(\abs{x}^2+1)^{2s+2it_{\nu}+1}}d\mu_{\nu}(x) \nonumber \\
	&=& \abs{y}_{\nu}^{\frac{1}{2}+it_{\red{\nu,2}}-s}\int_0^{\infty} \frac{r}{(r^2+1)^{2s+2it_{\nu}+1}} \int_{\red{0}}^{\red{2\pi}} e^{\red{4}\pi ryi\red{\cos(\theta)}}d\theta dr \nonumber \\
	&=& \red{2}\abs{y}_{\nu}^{\frac{1}{2}+it_{\red{\nu,2}}-s}\int_0^{\infty} \frac{r}{(r^2+1)^{2s+2it_{\nu}+1}}\int_{\red{0}}^{\red{\pi}}\red{\cos(4\pi y r\cos(\theta))}d\theta dr \nonumber \\
	&=& 2\pi \abs{y}_{\nu}^{\frac{1}{2}+it_{\red{\nu,2}}-s}\int_0^{\infty} \frac{rJ_0(4\pi y r)}{(r^2+1)^{2s+2it_{\nu}+1}} dr \nonumber \\
	&=&  \abs{y}_{\nu}^{\frac{1}{2}+\frac{1}{2}is_{\nu}}\frac{(2\pi)^{2s+2it_{\nu}+1}}{\Gamma(2s+2it_{\nu}+1)}K_{2s+2it_{\nu}}(4\pi y)\nonumber \\
	&=& 2 \abs{y}_{\nu}^{\frac{1}{2}+\frac{1}{2}is_{\nu}} \Gamma_{\C}(2s+2it_{\nu}+1)^{-1} K_{2s+2it_{\nu}}(4\pi y). \nonumber
\end{eqnarray}
Here we used \red{\cite[3.715~(18)]{GR07} to compute the $\theta$-integral, and} \cite[6.565 (4)]{GR07} to compute the $r$-integral. 

Next we turn to the non-archimedean places. We have to be careful because we are not working with an unramified additive character. Indeed, $n(\psi_{\p})=-v_{\p}(\mathfrak{d})=-d_{\p}$. Therefore, we use $\psi'_{\p}$ as defined above instead. We check
\begin{eqnarray}
	W_{s,\p}(g) &=& \int_{F_{\p}}v_{\p}^{\circ}(s)(\omega n(x)g) \psi_{\p}'(-\varpi_{\p}^{d_{\p}}x)d\mu_{F_{\p}}(x) \nonumber \\
	&=& \abs{\varpi_{\p}^{-d_{\p}}}_{\p}^{\frac{1}{2}-s}\chi_{2,\p}( \varpi_{\p}^{-d_{\p}}) \int_{F_{\p}}v_{\p}^{\circ}(s)(\omega n(x) a(\varpi_{\p}^{d_{\p}})g)\psi_{\p}'(-x)d\mu_{F_{\p}}(x).\nonumber
\end{eqnarray}
The last integral defines a Whittaker new vector with respect to the unramified character $\psi_{\p}'$. We set
\begin{equation}
	\tilde{W}_{s,\p}(a(\varpi_{\p}^{d_{\p}})g)= \int_{F_{\p}}v_{\p}^{\circ}(s)(\omega n(x) a(\varpi_{\p}^{d_{\p}})g)\psi_{\p}'(-x)d\mu_{F_{\p}}(x).\nonumber
\end{equation}

We start by considering $\p\nmid \n$. In this case the evaluation of $\tilde{W}_{s,\p}$ is quite standard. By \red{\cite[(6.11) and Theorem~4.6.5]{Bu96}} we have
\begin{equation}
	\tilde{W}_{s,\p}(1) = L_{\p}(2s+1,\chi_{1,\p}\chi_{2,\p}^{-1})^{-1} \nonumber
\end{equation}
\red{and}
\begin{equation}
	\tilde{W}_{s,\p}(a(\varpi_{\p}^m))=\begin{cases} \tilde{W}_{s,\p}(1) q_{\p}^{-\frac{m}{2}}\frac{\alpha_1^{m+1}-\alpha_2^{m+1}}{\alpha_1-\alpha_2} &\text{ if $m\geq 0$,} \\ 0 &\text{ else} \end{cases} \nonumber
\end{equation}
with $\alpha_1=\chi_{1,\p}(s)(\varpi_{\p})$ and $\alpha_2=\chi_{2,\p}(-s)(\varpi_{\p})$. This can be rewritten in terms of arithmetic functions. Indeed, for a fractional ideal $\mathfrak{a}$ we write 
\begin{equation}
	\chi_i(\mathfrak{a}) = \prod_{\p} \chi_{i,\p}(\varpi_{\p}^{v_{\p}(\mathfrak{a})}). \nonumber
\end{equation} 
and define the generalized divisor function
\begin{equation}
	\eta_{\chi_1,\chi_2,s}(\mathfrak{m}) = \sum_{\mathfrak{a}\mathfrak{b} = \mathfrak{m}} \chi_{1,\p}(s)(\mathfrak{a})\chi_{2,\p}(-s)(\mathfrak{b}).\label{eq:def_of_gen_div_sum}
\end{equation}
One checks that
\begin{equation}
	\frac{\alpha_1^{m+1}-\alpha_2^{m+1}}{\alpha_1-\alpha_2} = \eta_{\chi_1,\chi_2,s}(\p^m). \nonumber
\end{equation}
\red{Altogether} we have
\begin{equation}
	W_{s,\p}(a(\varpi_{\p}^m)) = \delta_{\red{m}\geq -d_{\p}} \frac{\abs{\varpi_{\p}^{-d_{\p}}}_{\p}^{\frac{1}{2}-s}\chi_{2,\p}( \varpi_{\p}^{-d_{\p}})}{L_{\p}(2s+1,\chi_{1,\p}\chi_{2,\p}^{-1})} \abs{\varpi_{\p}^{m\red{+d_{\p}}}}_{\p}^{\frac{1}{2}} \eta_{\chi_1,\chi_2,s}(\p^{m+d_{\p}}).\nonumber
\end{equation}

If $\p\mid\n$ we can not give an explicit formula for $W_{s,\p}$ in general. However, we will relate it to the normalized Whittaker function studied in \cite{Sa15_2}. Let $W_{\pi(s),\p}$ be the Whittaker new vector for $\pi(s)$ with respect to $\psi_{\p}'$ normalized by $W_{\pi(s),\p}(1)=1$. Then
\begin{equation}
	W_{s,\p}(g) = \abs{\varpi_{\p}^{-d_{\p}}}_{\p}^{\frac{1}{2}-s}\chi_{2,\p}( \varpi_{\p}^{-d_{\p}}) \tilde{W}_{s,\p}(1) W_{\pi(s),\p}(a(\varpi_{\p}^{d_{\p}})g). \nonumber
\end{equation}
Using \red{\cite[Lemma~2.2.1]{Sc02}} we have
\begin{equation}
	\tilde{W}_{s,\p}(1) = \red{q_{\p}^{-\frac{a(\chi_{1,\p})}{2}}}\Lambda_{\psi_{\p}'}f_s =  \red{q_{\p}^{-\frac{a(\chi_{1,\p})}{2}}} \epsilon_{\p}(s+1,\chi_{2,\p}^{-1},\red{\psi}_{\p}'^{-1}).\nonumber
\end{equation}
This implies
\begin{equation}
	W_{s,\p}(g) = q_{\p}^{(\frac{1}{2}-s)d_{\p}-\redd{\frac{a(\chi_{1,\p})}{2}-(s+\frac{1}{2})}a(\chi_{2,\p})}\chi_{2,\p}(\varpi_{\p}^{-d_{\p}})\epsilon(\frac{1}{2},\chi_{2,\p}^{-1},\red{\psi}_{\p}'^{-1})W_{\pi(s),\p}(a(\varpi_{\p}^{d_{\p}})g). \nonumber
\end{equation}

In order to summari\red{z}e this and write down the Whittaker expansion in \red{a} uniform manner we introduce some more notation. First, for $y\in \R_+^n$ and $\mathfrak{a}  = \prod_{\p\nmid \n}\p^{m_{\p}}$, we introduce 
\begin{eqnarray}
	W_{\infty,s}(a(y)) &=& \prod_{\nu}\red{\abs{y_{\redd{\nu}}}_{\nu}^{\frac{1}{2}+\frac{1}{2}s_{\nu}}}\frac{K_{[F_{\nu}:\R](s+\red{i}t_{\nu})}([F_{\nu}:\R]2\pi\abs{y_{\nu}})}{\Gamma_{\nu}(2(s+t_{\nu})+1)} \text{ and } \label{eq:arch_whitt_eis} \\
	W_{\text{ur},s}(a((\varpi_{\p}^{m_{\p}})_{\p\nmid \n})) &=& \frac{\delta_{\mathfrak{a}\subset \mathfrak{d}^{-1}[\mathfrak{d}]_{\n}}}{\sqrt{\No(\mathfrak{a\red{\frac{\mathfrak{d}}{[\mathfrak{d}]_{\n}}}})}}\eta_{\chi_{1},\chi_2,s}(\mathfrak{a}\frac{\mathfrak{d}}{[\mathfrak{d}]_{\n}}). \label{eq:spherical_whitt_eis}
\end{eqnarray}
At the places $\nu$ and $\p\nmid\n$ the new vector is spherical so that we can define $W_{\infty,s}(g)$ and $W_{\text{ur},s}(g)$ in the obvious way. At the remaining places we put
\begin{eqnarray}
	W_{\n,s}(g) &=& \prod_{\p\mid\n}W_{\pi(s),\p}(a(\varpi_{\p}^{d_{\p}})g_{\p}), \nonumber \\
	b_{\p}(s) &=& q_{\red{\p}}^{-\redd{\frac{a(\chi_{1,\p})}{2}-(\frac{1}{2}+s)}a(\chi_{2,\p})}\epsilon(\frac{1}{2},\chi_{2,\p}^{-1},\red{\psi}_{\p}'^{-1})L_{\p}(2s+1,\chi_{1,\p}\chi_{2,\p}^{-1}) \nonumber \\
	b_r(s) &=&\prod_{\p\mid\n}b_{\p}(s). \label{eq:def_of_b_ram}
\end{eqnarray}
We have shown that
\begin{equation}
	W_s(g) = \frac{\red{2^{r_1+2r_2}}\No(\mathfrak{d})^{-s}\chi_2^{-1}(\mathfrak{d})}{L(2s+1,\chi_1\chi_2^{-1})}b_r(s)W_{\text{ur},s}(g_{ur})W_{\n,s}(g_{\n})W_{\infty\red{,s}}(g_{\infty}), \nonumber
\end{equation}
for $g=g_{\text{ur}} g_{\n} g_{\infty}$.

\subsection{The Whittaker expansion of $E(s,g)$}

Summarizing the computations from the previous two subsections we obtain
\begin{multline}
	E(s,g) = v^{\circ}(s)(g) + \frac{c_r(s)}{\sqrt{\No(\mathfrak{d})}} \frac{\Lambda(2s,\chi_1\chi_2^{-1})}{\Lambda(2s+1,\chi_1\chi_2^{-1})}\hat{v}^{\circ}(-s)(g)\\
	+ \frac{\red{2^{r_1+2r_2}}\No(\mathfrak{d})^{-s}\chi_2^{-1}(\mathfrak{d})}{L(2s+1,\chi_1\chi_2^{-1})}b_r(s)\sum_{q\in F^{\times}} W_{ur,s}(a(q) g_{ur})W_{\n,s}(a(q) g_{\n})W_{\infty,s}(a(q)g_{\infty}). \label{eq:Whittaker_exp_E}
\end{multline}
Here the constants $c_{r}(s)$ and $b_r(s)$ come from the choice of $v^{\circ}$ at the ramified places and are explicitly given in \eqref{eq:def_of_c_ram} and \eqref{eq:def_of_b_ram}. 

We define the truncated Eisenstein series
\begin{eqnarray}
	F(s,g) &=& E(s,g) -  v^{\circ}(s)(g) - \frac{c_r(s)}{\sqrt{\No(\mathfrak{d})}} \frac{\Lambda(2s,\chi_1\chi_2^{-1})}{\Lambda(2s+1,\chi_1\chi_2^{-1})}\hat{v}^{\circ}(-s)(g). \nonumber
\end{eqnarray}
The upshot is that the Whittaker expansion of $F$ has no constant term and many estimates will carry over from the cusp form case considered in \cite{As17_2}. In the following we will bring $F$ in the necessary shape.

For now let us fix $1 \leq i\leq h_F$, $g\in \mathcal{J}_{\n}$ and  $n(x) a(y)\in \mathcal{F}_{\n_2}$. Then \eqref{eq:arch_whitt_eis} implies that
\begin{equation}
	\abs{W_{\infty,s}(a(q)n(x)a(y))} = \abs{qy}_{\infty}^{\frac{1}{2}} \prod_{\nu} \frac{\abs{K_{[F_{\nu}\colon \R](s+it_{\nu})}([F_{\nu}\colon \R] 2\pi \abs{q_{\nu}y_{\nu}})}}{\abs{\Gamma_{\nu}(2(s+it_{\nu})+1)}}. \label{eq:arch_set_up_eisenstein}
\end{equation}
Due to \eqref{eq:spherical_whitt_eis} it is easy to control the unramified coefficients. Indeed, we have
\begin{equation}
	\abs{W_{\text{ur},s}(a(q \theta_i))} = \begin{cases}  
		\abs{q\theta_i\red{\mathfrak{d}}}_{ur}^{\frac{1}{2}}\abs{\eta_{\chi_1,\chi_2,s}\left( \frac{(q)\red{\theta_i\mathfrak{d}}}{[(q)\red{\theta_i\mathfrak{d}}]_{\n}} \right)} &\text{ if } v_{\p}(q\theta_i) \geq -v_{\p}(\mathfrak{d}) \text{ for all} \p\nmid \n, \ \\
		0 &\text{ else.}
	\end{cases}\nonumber
\end{equation} 
The generalized divisor sum $\eta_{\chi_1,\chi_2,s}$ was defined in \eqref{eq:def_of_gen_div_sum} \red{and $\abs{\cdot}_{ur} = \prod_{\p\nmid \n} \abs{\cdot}_{\p}$}. \red{Recall the definitions of $l(g_{\p})$, $t(g_{\p})$, $n_{1,\p}(g_{\p})$, $n_{0,\p}=\min(l(g_{\p}),n_{\p}-l(g_{\p}))$, and $m_{1,\p}(g_{\p}\redd{)}=\max(0,n_{0,\p}(g_{\p})-n_{\p}+m_{\p})$ from \cite[Section~2.1]{As17_2}.} As \red{in \cite[(3.2)]{As17_2}} we define the ideals
\begin{equation}
	\red{\n_1 = \n_0\n_2 = \prod_{\p} \p^{n_{1,\p}}, \quad \mathfrak{m}_1(g) = \prod_{\p} \p^{m_{1,\p}(g_{\p})} \text{ and }} \imath= \n_0\mathfrak{m}_1(g)\red{\mathfrak{d}} \prod_{\p}\p^{v_{\p}(\theta_i)}.\label{eq:def_of_several_ideals}
\end{equation}
\red{Note that $\mathcal{N}(\mathfrak{m}_1(g))\leq \mathcal{N}(\mathfrak{m}_1)$, where  $\mathfrak{m}_1 = \frac{\mathfrak{m}}{(\mathfrak{m},\n_2\n_0)}$ as in Theorem~\ref{th:main_th_3}.}

We write 
\begin{equation}
	\lambda_{\n,s}(q) = W_{\n,s}(a(q\theta_i)g_{\red{\n}}) =  \prod_{\p\mid\n} W_{\pi(s),\p}(a(\varpi_{\p}^{v_{\p}(\mathfrak{d})}\theta_i q)g_{\p}). \nonumber
\end{equation}

Towards the support of $\lambda_{\n,s}(q)$ we can prove the following lemma.
\begin{lemma} 
If $\lambda_{\n,s}(q)\neq 0$ then $v_{\p}(q) \geq  -v_{\p}(\theta_i)-v_{\p}(\mathfrak{d})-n_{0,\p}-m_{1,\p}(g_p)$ for all $\p\mid \n$.
\end{lemma}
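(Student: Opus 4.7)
The plan is to reduce the claim to a purely local statement at each $\p\mid \n$. Writing out the definition, $\lambda_{\n,s}(q) = \prod_{\p\mid\n} W_{\pi(s),\p}(a(\alpha_{\p})g_{\p})$ where $\alpha_{\p} = \varpi_{\p}^{v_{\p}(\mathfrak{d})}\theta_i q$, so vanishing of a single local factor suffices. Thus the task becomes: at each $\p\mid\n$, show $W_{\pi(s),\p}(a(\alpha_{\p})g_{\p}) = 0$ unless $v_{\p}(\alpha_{\p}) \geq -n_{0,\p} - m_{1,\p}(g_p)$, which after unwinding is exactly the stated bound on $v_{\p}(q)$.

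First, I would exploit the right $K_1(\n_{\p})$-invariance of $W_{\pi(s),\p}$, together with the explicit description of the generating domain $\mathcal{J}_{\n}$ from \cite[Section~2.2]{As17_2}. The local components $g_{\p}$ admit a normal form, modulo $K_1(\n_{\p})$, of a diagonal factor $a(\varpi_{\p}^{-m_{1,\p}(g_p)})$ multiplied by a representative parametrised by $n_{0,\p}$ (in the cuspidal analogue this was used in the exact same way). Using $a(\alpha_{\p})a(\varpi_{\p}^{-m_{1,\p}(g_p)}) = a(\alpha_{\p}\varpi_{\p}^{-m_{1,\p}(g_p)})$ one reduces the question to evaluating $W_{\pi(s),\p}$ on $a(\varpi_{\p}^{m})\kappa$ with $m = v_{\p}(\alpha_{\p}) - m_{1,\p}(g_p)$ and $\kappa$ ranging over the $n_{0,\p}$-coset representatives.

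Second, I would apply the standard support result for local Whittaker new vectors of principal series representations with prescribed conductor. The explicit formulas in \cite{Sc02} (in the same form as used for the computations of $W_{s,\p}$ in Section~\ref{app:whitt_f}) imply that $W_{\pi(s),\p}(a(\varpi_{\p}^{m})\kappa) = 0$ whenever $m < -n_{0,\p}$. Combining this with the previous shift gives $v_{\p}(\alpha_{\p}) - m_{1,\p}(g_p) \geq -n_{0,\p}$, which unwinds to the inequality in the lemma.

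The main obstacle is the precise bookkeeping of the Iwasawa decomposition of $a(\alpha_{\p})g_{\p}$ when $g_{\p}$ sits in the lower-triangular Bruhat cell, since in that case the diagonal shift picks up an extra contribution from the Bruhat factor. However this computation has essentially been carried out for cuspidal new vectors in \cite{As17_2}, and because $W_{\pi(s),\p}$ is the Whittaker new vector of a representation with the same local conductor as a cuspidal new form of conductor $\n$, the argument transfers verbatim. The case $\p\nmid \mathfrak{L}$ where the twisting is unramified introduces no complication since $n_{0,\p}$ and $m_{1,\p}(g_p)$ behave consistently with respect to the isomorphism from Lemma~\ref{lm:Eisenstein_reduction}.
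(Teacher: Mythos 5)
Your proposal takes a genuinely different route from the paper, and as written it does not close. The paper's proof is a three-line reduction to Proposition~\ref{pr:supp_non_unitary_W}, part (1): since $\lambda_{\n,s}(q)$ is a product of local Whittaker new vectors, non-vanishing of each factor forces $r(g)\geq 0$ in Saha's coordinates, which reads
\begin{equation}
v_{\p}(\theta_i q)+v_{\p}(\mathfrak{d})+t(g_{\p}) \geq -\max\big(2l(g_{\p}),\,l(g_{\p})+m_{\p},\,n_{\p}\big),\nonumber
\end{equation}
and then the structural facts $l(g_{\p})\leq n_{0,\p}$, $t(g_{\p})=-n_{1,\p}$ (coming from the definition of $\mathcal{J}_{\n}$ and \cite[Lemma~2.4]{Sa15}) unravel this to the stated bound via $-n_{0,\p}-\max(0,l(g_{\p})-n_{\p}+m_{\p})\geq -n_{0,\p}-m_{1,\p}(g_{\p})$. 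The whole point of the appendix is precisely to establish Proposition~\ref{pr:supp_non_unitary_W} for non-unitary principal series so that this step is licensed for $\Im(s)\neq 0$; you never invoke it, instead trying to push through with the explicit diagonal-variable formulas of \cite{Sc02}.

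The concrete gap is in your second step. First, the normal form you claim for $g_{\p}$ modulo $K_1(\n_{\p})$ is not what $\mathcal{J}_{\n}$ provides: elements of $\mathcal{J}_{\n}$ lie in $K_{\p}a(\varpi_{\p}^{n_{1,\p}})$ with $l(g_{\p})\leq n_{0,\p}$, and the whole difficulty (which you flag as ``the main obstacle'' and then defer) is that $a(\alpha_{\p})g_{\p}$ does not simply factor through a diagonal shift; one has to track the Iwasawa $t(\cdot),l(\cdot)$ data, which is exactly what Saha's bookkeeping does. Second, the blanket support condition ``$W_{\pi(s),\p}(a(\varpi_{\p}^m)\kappa)=0$ for $m<-n_{0,\p}$'' is not a statement in \cite{Sc02}; the bound there is in terms of the conductor $n_{\p}$ on diagonal arguments, and the dependence on $n_{0,\p}$ only emerges after one combines the generic support condition with the constraints defining $\mathcal{J}_{\n}$. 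Third, and most visibly, your own bookkeeping produces $v_{\p}(\alpha_{\p})-m_{1,\p}(g_p)\geq -n_{0,\p}$, i.e.\ $v_{\p}(q)\geq -v_{\p}(\theta_i)-v_{\p}(\mathfrak{d})-n_{0,\p}+m_{1,\p}(g_p)$, which for $m_{1,\p}(g_p)>0$ is strictly stronger than the lemma and does not ``unwind to the inequality in the lemma''; the sign of $m_{1,\p}$ is flipped, which indicates the claimed diagonal factor in your normal form is on the wrong side or has the wrong sign. You should instead deduce the statement from Proposition~\ref{pr:supp_non_unitary_W}(1) together with $l(g_{\p})\leq n_{0,\p}$ and $t(g_{\p})=-n_{1,\p}$, as the paper does.
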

This is essentially \cite[Lemma~3.11]{Sa15}. The only difference is that if $\Im(s) \neq 0$, we are not dealing with unitary representations. 
\begin{proof}
If $\lambda_{\n,s}(q)\neq 0$ then $W_{\pi(s),\p}(a(\varpi_{\p}^{v_{\p}(\mathfrak{d})}\theta_i q)g_{\p})\neq 0$ for \red{all} $\p\mid \n$ so that Proposition~\ref{pr:supp_non_unitary_W} implies
\begin{equation}
	v_{\p}(\theta_i q)+v_{\p}(\mathfrak{d})+ t(g_{\p}) \geq -\max(2l(g_{\p}),l(g_{\p})+m_{\p},n_{\p}) \text{ for all } \p\mid \n. \nonumber
\end{equation}
Since $g\in \mathcal{J}_{\n}$ we have $g_{\p} \in K_{\p}a(\varpi_{\p}^{n_{1,\p}})$ \red{and $n_{1,\p}(g_{\p})=n_{0,\p}$.} \red{Thus, if $n_{1,\p}>n_{0,\p}$, then} $l(g_{\p})\leq n_{0,\p}$. Furthermore, by \cite[Lemma~2.4]{Sa15} we have
\begin{equation}
	t(g_{\p}) = \min(n_{1,\p}-2l(g_{\p}),-n_{1,\p}) = -n_{1,\p}. \nonumber
\end{equation}
It follows that
\begin{equation}
	v_{\p}(\theta_i q)+v_{\p}(\mathfrak{d}) \geq -n_{0,\p}-\max(0,l(g_{\p})-n_{\p}+m_{\p}) \red{=} -n_{0,\p}-m_{1,\p}(g_{\p}). \label{eq:helpful_eq1}
\end{equation}

\red{On the other hand if $n_{1,\p}=n_{0,\p}$, we might encounter the situation $l_{\p}(g_{\p}) > n_{0,\p}$. In this case $t(g_{\p})=n_{0,\p}-2l(g_{\p})$, $n_{0,\p}(g_{\p})= n_{\p}-l(g_{\p})$, and $m_{1,\p}(g_{\p}) = \max(0,m_{\p}-l(g_{\p}))$. We estimate
\begin{align}
	v_{\p}(\theta_i q)+v_{\p}(\mathfrak{d}) &\geq -t(g_{\p})-\max(2l(g_{\p}),l(g_{\p})+m_{\p},n_{\p}) \nonumber \\ 
	&= -n_{0,\p}+2l(g_{\p})-\max(2l(g_{\p}),l(g_{\p})+m_{p}) \nonumber \\
	&= -n_{0,\p}-\max(0,m_{\p}-l(g_{\p})) = -n_{0,\p}-m_{1,\p}(g_{\p}). \label{eq:helpful_eq2}
\end{align}
Otherwise we can argue as above.
}
\end{proof}

\begin{rem} \label{rem:constants_for_eis}
Since the central character of $\pi(s)$ is unitary (and independent of $s$) the statement of \cite[Lemma~3.4]{As17_2} holds also for $W_{\pi(s),\p}$. The proof remains the same.
\end{rem}

The Whittaker expansion of $F$ is given by
\begin{eqnarray} 
	F(s,a(\theta_i\red{)} g n(x)a(y)) &=& \frac{\red{2^{r_1+2r_2}}\No(\mathfrak{d})^{\red{-s}}\chi_2^{-1}(\mathfrak{d})}{L(2s+1,\chi_1\chi_2^{\red{-1}})}b_r(s) \label{eq:whitt_eis_detail} \\
	&&\qquad \cdot\sum_{0\neq q\in \imath^{-1}} \abs{q \imath}_{ur}^{\frac{1}{2}} \eta_{\chi_1,\chi_2,s}\left( \frac{(q)\imath}{[(q)\imath]_{\n}} \right) \lambda_{\n,s}(q) W_{\infty,s}(a(q)n(x)a(y)).  \nonumber
\end{eqnarray}
In analogy to the notation in \cite{As17_2} we write
\begin{equation}
	\lambda_{\text{ur},s}(q) = \abs{q \imath}_{ur}^{\frac{1}{2}} \eta_{\chi_1,\chi_2,s}\left( \frac{(q)\imath}{[(q)\imath]_{\n}} \right).\nonumber
\end{equation}

\begin{rem} \label{rem:rel_forEisenstein}
Since $E(s,g)$ is the Eisenstein series associated to a new vector in an induced representation it is an eigenfunction of the Hecke operators $T(\mathfrak{a})$ for $(\mathfrak{a,\n})=1$. Similar to \cite[Lemma~3.2]{As17_2} we obtain
\begin{equation}
	\lambda_{\text{ur},s}(q) = \frac{\lambda_{s}\left( \frac{(q)\imath}{[(q)\imath]_{\n}} \right) }{\No\left( \frac{(q)\imath}{[(q)\imath]_{\n}} \right)}. \nonumber
\end{equation}
Where $\lambda_s(\mathfrak{a})$ is the corresponding Hecke eigenvalue. In particular we can express the Hecke eigenvalues in terms of generalized divisor sums.
\end{rem}

We obtain the following proposition which establishes good control on $F$ high up in the cusp.
 
\begin{prop} \label{pr:whit_exp_est_bad}
For $g\in \red{\mathcal{J}}_{\n}$  we have
\begin{eqnarray}
	&&\abs{F(iT_0,a(\theta_i)gn(x)a(y))} \nonumber \\
	&&\ll_{F,\epsilon} (\abs{T}_{\infty}\abs{y}_{\infty}^{-1}\No(\n))^{\epsilon} \bigg( \abs{T}_{\infty}^{\frac{1}{6}}\No(\n_0)^{\frac{1}{2}}+\abs{T}_{\infty}^{\frac{1}{6}}\abs{\frac{T}{y}}_{\infty}^{\frac{1}{4}}\No(\n_0\mathfrak{m}_1(g))^{\frac{1}{4}}+\abs{\frac{T}{y}}_{\infty}^{\frac{1}{2}}\No(\n_0\mathfrak{m}_1(g))^{\frac{1}{2}} \bigg). \nonumber
\end{eqnarray}
\end{prop}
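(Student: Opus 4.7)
The strategy is to start from the explicit Whittaker expansion \eqref{eq:whitt_eis_detail} at $s=iT_0$ and estimate each factor in turn, mirroring the treatment of cusp forms in \cite[Section~3]{As17_2}. First, control the global prefactor $(-4i)^{r_2}\mathcal{N}(\mathfrak{d})^{1/2-iT_0}\chi_2^{-1}(\mathfrak{d})b_r(iT_0)/L(1+2iT_0,\chi_1\chi_2^{-1})$: the denominator is bounded below via a standard De la Vall\'ee Poussin zero-free region for the Hecke $L$-function, which together with the tameness hypothesis on $\mathcal{N}(\mathfrak{l})$ built into the framework gives $|L(1+2iT_0,\chi_1\chi_2^{-1})|^{-1}\ll(|T|_\infty\mathcal{N}(\mathfrak{l}))^\epsilon$. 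The factor $b_r(iT_0)$ is, by \eqref{eq:def_of_b_ram}, a product of local $\epsilon$-factors of modulus one with unramified $L$-factors at $\mathfrak{p}\nmid \mathfrak{l}$, so $|b_r(iT_0)|\ll\mathcal{N}(\mathfrak{n})^\epsilon$. After this reduction the task becomes bounding the Dirichlet-style sum
\begin{equation}
\Sigma \;=\; \sum_{0\neq q\in\imath^{-1}}\lambda_{\mathrm{ur},iT_0}(q)\,\lambda_{\mathfrak{n},iT_0}(q)\,W_{\infty,iT_0}(a(q)n(x)a(y)). \nonumber
\end{equation}

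Next, bound the three factors in the summand. The unramified coefficient satisfies $|\lambda_{\mathrm{ur},iT_0}(q)|\leq|q\imath|_{\mathrm{ur}}^{1/2}\,\mathcal{N}((q)\imath/[(q)\imath]_{\mathfrak{n}})^\epsilon$ by the standard divisor bound on the generalized divisor sum \eqref{eq:def_of_gen_div_sum} (sharp here because $\chi_1,\chi_2$ are unitary). The ramified factor $\lambda_{\mathfrak{n},iT_0}(q)$ is supported in the cone described in the lemma immediately preceding the proposition, and is bounded using Remark~\ref{rem:constants_for_eis} exactly as in \cite[Lemma~3.4]{As17_2}. The archimedean Whittaker function \eqref{eq:arch_whitt_eis}, after factoring out $|qy|_\infty^{1/2}$ and the gamma factors, reduces to a product of $K$-Bessel functions $K_{[F_\nu:\mathbb{R}]i(T_0+t_\nu)}([F_\nu:\mathbb{R}]2\pi|q_\nu y_\nu|)$. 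Uniform bounds for these (exponential decay once $|q_\nu y_\nu|\gg T_\nu^{1+\epsilon}$, and $K_{it}(x)\ll t^{-1/3}$ in the transitional regime $x\asymp t$, respectively the stationary-phase behaviour $\ll x^{-1/2}$ for $x\ll t$) effectively truncate the $q$-sum to the box $|q_\nu y_\nu|\lesssim|T_\nu|$ at every archimedean place.

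Finally, split the truncated sum according to the relative size of $|q_\nu y_\nu|$ versus $|T_\nu|$. The three terms in the stated bound correspond exactly to three ways of balancing the Weyl-law-style transitional $K$-Bessel bound against a lattice-point count of the admissible $q\in\imath^{-1}$: a trivial pointwise bound on a range of size $\ll|T/y|_\infty\mathcal{N}(\mathfrak{n}_0\mathfrak{m}_1(g))^{-1}$ yields the third term; pairing one factor of $t^{-1/3}$ from the $K$-Bessel with a Cauchy--Schwarz on the divisor sum yields the first two terms. The main obstacle is the bookkeeping across mixed real and complex archimedean places while tracking the sharp support conditions at the ramified finite primes (via $\mathfrak{n}_0$ and $\mathfrak{m}_1(g)$), together with verifying that the Hecke-relation identity of Remark~\ref{rem:rel_forEisenstein} plays the same role here as it did for cusp forms. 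Since the ramified Whittaker function $W_{\pi(iT_0),\mathfrak{p}}$ has exactly the same shape and support properties as in the cuspidal setting, and the generalized divisor sum replaces the cuspidal Hecke eigenvalue with essentially no loss, the entire archimedean/non-archimedean balancing argument of \cite[Section~3]{As17_2} transports verbatim to the present situation.
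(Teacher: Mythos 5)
Your proposal matches the paper's own argument: the paper also reduces to bounding the global prefactor $c_F=\frac{(-4i)^{r_2}\No(\mathfrak{d})^{1/2-iT_0}\chi_2^{-1}(\mathfrak{d})}{L(1+2iT_0,\chi_1\chi_2^{-1})}b_r(iT_0)\ll(\No(\n)\abs{T}_\infty)^\epsilon$ via the estimate on $b_r$ and the lower bound on $L(1+2iT_0,\chi_1\chi_2^{-1})$, and then invokes the cusp-form argument of \cite[Proposition~3.1]{As17_2} verbatim. Your points (2) and (3) merely unpack what that citation contains, so the strategy and the key lemmas are the same.
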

\begin{proof}
We have
\begin{equation}
	c_{F}\red{(s)} \colon = \frac{\red{2^{r_1+2r_2}}\No(\mathfrak{d})^{\red{-s}}\chi_2^{-1}(\mathfrak{d})}{L(\red{2s}+1,\chi_1\chi_2^{\red{-1}})}b_r(\red{s}) \ll_{F\red{,\Re(s)}} \frac{\abs{b_r(\red{s})}}{\abs{L(\red{2s}+1,\chi_1\chi_2^{-1})}}. \label{eq:def_cFs}
\end{equation}
One checks by hand that
\begin{equation}
	b_{r}(s)  \ll\red{ \zeta_{[\n]_{\mathfrak{l}}}(1+2\Re(s)) \prod_{\p\mid \n } q_{\p}^{\redd{-\frac{a(\chi_{1,\p})}{2}-(\Re(s)+\frac{1}{2})a(\chi_{2,\p})}}, \text{ for }\Re(s)>-\frac{1}{2}}. \label{eq:rough_br}
\end{equation}
In particular we have
\begin{equation}
	b_r(s) \ll_{F,\epsilon\red{,\Re(s)}} \No(\n)^{\epsilon}, \red{\text{ for } s>-\frac{1}{2}}. \nonumber
\end{equation}
\red{Thus, in order to bound $c_F(s)$ we still need a suitable lower bound for $L(1+2s, \redd{\chi_1\chi_2^{-1}})$. Indeed, the bound
\begin{equation}
	L(1+2s, \chi_1\chi_2^{-1}) \gg_{F} \log(\No(\mathfrak{l})(2\abs{\Im(s)}+3))^{\redd{-2}}, \text{ for }-\frac{c}{\log(\No(\mathfrak{l})(2\abs{\Im(s)}+3))^{\redd{2}}} <\Re(s) \text{ and } \abs{\Im(s)}\gg 1, \nonumber
\end{equation}
for some small constant $c$ is folklore. However, since we could not locate a suitable reference, let us sketch the proof. First, one deduces from \cite[(5.27), (5.28)]{IK04} and the standard zero-free region for $L(s,\chi_1\redd{\chi_2^{-1}})$, \cite[Theorem~5.35]{IK04}, that
\begin{equation}
	\frac{L'(1+2s, \chi_1\chi_2^{-1})}{L(1+2s, \chi_1\chi_2^{-1})} \ll_F \log(\No(\mathfrak{l})(2\abs{\Im(s)}+3))^2,\nonumber
\end{equation}
when $s$ satisfies $-\frac{c}{\log(\No(\mathfrak{l})(2\abs{\Im(s)}+3))^{\redd{2}}} <\Re(s)$ and $\abs{\Im(s)}\gg 1$. We conclude by observing that
\begin{multline*}
	\log\left(\frac{1}{\abs{L(1+2s, \chi_1\chi_2^{-1})}}\right) =-\Re(\log(L(1+2s, \chi_1\chi_2^{-1})))  \\
	= -\Re\left(\log\left(L\left(1+\frac{1}{\log(\No(\mathfrak{l})(2\abs{\Im(s)}+3))^2}+\Im(s), \chi_1\chi_2^{-1}\right)\right)\right) \\
	+\int_{1+2\Re(s)}^{1+\frac{1}{\log(\No(\mathfrak{l})(2\abs{\Im(s)}+3))^2}}\Re\left(\frac{L'(u+\Im(s),\chi_1\chi_2^{-1})}{L(u+\Im(s),\chi_1\chi_2^{-1})}\right)du\nonumber
\end{multline*}
and estimating trivially.

We deduce
\begin{equation}
	\abs{c_F(\sigma+iT_0)} \ll_{F, \sigma, \epsilon} (\No(\n)\abs{T}_{\infty})^{\epsilon} \text{ for }-\frac{c}{\log(\No(\n)\abs{T}_{\infty})^{\redd{2}}} < \sigma. \label{eq:bounding_cFs}
\end{equation}}

If we use this estimate to replace \cite[Lemma~3.5]{As17_2}, we can follow step by step the proof of \cite[Proposition~3.1]{As17_2}. This yields the desired bound.
\end{proof}

\subsection{Preliminary estimates for Eisenstein series}

As a result of the previous section we can control $F(\red{s,g})$ for $s\in i \R$ via its Whittaker expansion. However, later on we will need estimates for $F$ when $s$ is not purely imaginary. The goal of this section is to establish such estimates following closely the arguments in \cite{Yo15} and \cite{HX16}.

Let us write  $s= \sigma+it$. For convenience we restrict ourselves to $g=a(\theta_i)g'n(x)a(y)$. Let us recall some properties of $g$. First, note that $g_{\p} = a(\theta_{i,\p})$ for all $\p\nmid \n$. For $\p\mid\n$ we claim that
\begin{equation}
	H_{\p}(g_{\p})= \red{\begin{cases}
		\abs{\theta_i}_{\p}q_{\p}^{-n_{0,\p}}&\text{ if } n_{1,\p}=n_{0,\p} \text{ and } l(g_{\p})>n_{0,\p}, \\
		\abs{\theta_i}_{\p}q_{\p}^{n_{1,\p}-2l(g_{\red{\p}})} &\text{ else.}
	\end{cases}} \label{eq:special_desc_of_H}
\end{equation}
This can be seen as follows. The definition of $\mathcal{J}_{\n}$ implies $g_{\p}=a(\theta_{i,\p})ka(\varpi_{\p}^{n_{1,\p}})$ with \red{$n_{1,\p}(g_{\p}) = n_{0,\p}$}. \red{Thus, if $n_{1,\p}>n_{0,\p}$ or $n_{1,\p}+n_{0,\p}$ and $l(ka(\varpi_{\p}^{n_{1,\p}}))\leq n_{0,\p}$, then \cite[Lemma~2.4]{Sa15} implies that $t(ka(\varpi_{\p}^{n_{1,\p}})) = \min(n_{1,\p}-2l(g_{\p}),-n_{1,\p}) = -n_{1,\p}$. In this case the claim is easily deduced from \cite[Remark~2.1]{Sa15}. If $n_{1,\p}=n_{0,\p}$ and $l(ka(\varpi_{\p}^{n_{1,\p}}))> n_{0,\p}$, then $t(ka(\varpi_{\p}^{n_{1,\p}}))=n_{0,\p}-2l(g_{\p})$. One also concludes using  \cite[Remark~2.1]{Sa15}.}
The archimedean part is controlled by \cite[Lemma~\redd{6}]{BHMM16}. Indeed,
\begin{equation}
	H_{\infty}(g) = \abs{y}_{\infty} \gg \No(\n_2)^{-1}. \nonumber
\end{equation}

Further, we recall some standard estimates for the $K$-Bessel function. The basic bound
\begin{equation}
	K_{\sigma+it}(x) \ll_{\red{\sigma,\epsilon}} \begin{cases} 
		(1+\abs{t})^{\red{\abs{\sigma}}+\epsilon}x^{-\red{\abs{\sigma}}-\epsilon}e^{-\pi\frac{\abs{t}}{2}} &\text{ for }0<x\leq 1+\pi\frac{\abs{t}}{2}, \\
		 x^{-\frac{1}{2}}e^{-x} &\text{ for } x> 1+\pi\frac{\abs{t}}{2}
	\end{cases} \label{eq:HM_bessel_est}
\end{equation}
\red{follows from \cite[Proposition~\redd{9}]{HM06} together with  $K_{-\sigma+it}(x)=\overline{K_{-\sigma-it}(x)}= \overline{K_{\sigma+it}(x)}$. Furthermore, for $\sigma>\redd{-}\frac{1}{2}$ Stirling's approximation yields
\begin{equation}
	\Gamma(\frac{1}{2}+\sigma+it)\gg_{\sigma} (1+\abs{t})^{\sigma}e^{-\frac{\pi\abs{t}}{2}}. \nonumber
\end{equation}
Thus, we deduce that 
\begin{equation}
	\frac{K_{\sigma+it}(y)}{\Gamma(\frac{1}{2}+\sigma+it)} \ll_{\sigma,\lambda} y^{-\abs{\sigma}-\lambda}(1+\abs{t})^{\abs{\sigma}-\sigma+\lambda} \red{\text{ for }\lambda>0 \text{ and } \sigma>-\frac{1}{2}}.\nonumber
\end{equation}
Observe that when $-\frac{c}{\log(1+\abs{t})}<\sigma<0$ we have $(1+\abs{t})^{2\abs{\sigma}} \ll 1$. Thus, we obtain
\begin{equation}
	\frac{K_{\sigma+it}(y)}{\Gamma(\frac{1}{2}+\sigma+it)} \ll_{\sigma,\lambda} y^{-\abs{\sigma}-\lambda}(1+\abs{t})^{\lambda} \red{\text{ for }\lambda>0 \text{ and } \sigma\gg-\frac{1}{\log(1+\abs{t})}}.\label{eq:yo_bound_bessel_real}
\end{equation}
This is weaker than \eqref{eq:HM_bessel_est}, but sufficient for our purposes. A similar estimate has been used in \cite{Yo15}.} From this we can derive a bound which will be suitable for the complex places. Indeed, using Stirling's approximation to estimate $\Gamma( \frac{1}{2}+\sigma+it)/\Gamma(1+\sigma+it)$ one obtains
\begin{equation}
	\frac{K_{2(\sigma+it)}(y)}{\Gamma(1+2(\sigma+it))} \ll_{\sigma,\lambda} y^{-2\red{\abs{\sigma}}-\lambda}(1+\abs{t})^{\lambda-\frac{1}{2}}  \red{\text{ for }\lambda>0 \text{ and } \sigma\gg-\frac{1}{\log(1+\abs{t})}}. \label{eq:yo_bound_bessel_complex}
\end{equation}

 From this we deduce the bound
\begin{equation}
	\abs{W_{\infty,s}(a(q)n(x)a(y))} \ll_{F,\sigma,\lambda} \abs{qy}_{\infty}^{\frac{1}{2}-\abs{\sigma}-\lambda-\epsilon}\abs{T}_{\infty}^{\lambda+\epsilon}\abs{T}_{\C}^{-\frac{1}{4}}\red{\text{ for }\lambda>0 \text{ and } \sigma\gg-\frac{1}{\log(1+\abs{t})}}. \nonumber
\end{equation}
Where we write $T=(T_{\nu})_{\nu}$ with $T_{\nu} = \red{\max}(\frac{1}{2},\abs{t_{\nu}+t})$.

Finally, we record the trivial bound
\begin{equation}
	\lambda_{\text{ur},s}(q) = \abs{q\imath}_{\text{ur}}^{\frac{1}{2}}\eta_{\chi_1,\chi_2,s}\left( \frac{(q)\imath}{[(q)\imath]_{\n}}\right) \ll_{\epsilon,F} \abs{q\imath}_{\text{ur}}^{\frac{1}{2}-\abs{\sigma}-\epsilon}. \nonumber
\end{equation}

We are now ready for our first estimate.

\begin{lemma} \label{lm:prelim_est_F}
Let $g=a(\theta_i) g' n(x)a(y)$ with $g\red{'}\in \mathcal{J}_{\n}$ and $n(x)a(y)\in \mathcal{F}_{\n_2}$ as usual. If $s = \sigma+it$ for $\sigma>-\frac{c}{\log(\No(\n)\abs{T}_{\infty})^{\redd{2}}}$ and $t\neq 0$, then
\begin{multline}
	F(s,g) \ll_{F,\sigma,\red{\epsilon}} \No(\n)^{\abs{\sigma}+\epsilon}\No(\imath)^{\abs{\sigma}\red{+\epsilon}}\abs{T}_{\infty}^{1+\epsilon}\abs{y}_{\infty}^{-\frac{1}{2}-\abs{\sigma}}\\ \cdot \left( \No(\imath)^{\frac{1}{2}}\red{+\No(\imath)^{\frac{1}{4}}\No(\n_0(g))^{\frac{1}{2}}\abs{y}_{\infty}^{\frac{1}{4}}}+ \No(\n_0(g))^{\frac{1}{2}}\abs{y}_{\infty}^{\red{\frac{1}{2}}} \right). \nonumber
\end{multline}
\end{lemma}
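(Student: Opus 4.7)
The plan is to extend Proposition~\ref{pr:whit_exp_est_bad} to $\sigma = \Re(s)$ slightly negative by revisiting each factor in the Whittaker expansion \eqref{eq:whitt_eis_detail}. First I would handle the prefactor $c_F(s) = (-4i)^{r_2}\No(\mathfrak{d})^{1/2-s}\chi_2^{-1}(\mathfrak{d})\,b_r(s)/L(2s+1,\chi_1\chi_2^{-1})$. The bound \eqref{eq:rough_br} continues to give $\abs{b_r(s)} \ll \No(\n)^{O(\abs{\sigma})+\epsilon}$ in the range of interest, so the new ingredient is a lower bound on $\abs{L(2s+1,\chi_1\chi_2^{-1})}$. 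The hypothesis $\sigma > -c/\log(\No(\n)\abs{T}_\infty)$ is tailored precisely so that $2s+1$ lies inside a standard zero-free region for this Hecke $L$-function, uniform in conductor and archimedean parameter, yielding $\abs{L(2s+1,\chi_1\chi_2^{-1})}^{-1} \ll (\No(\n)\abs{T}_\infty)^\epsilon$ and hence $\abs{c_F(s)} \ll \No(\n)^{O(\abs{\sigma})+\epsilon}\abs{T}_\infty^\epsilon$.

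Next I would bound each term of the Whittaker sum by combining the three estimates recorded immediately before the statement: the trivial divisor-sum bound $\abs{\lambda_{\text{ur},s}(q)} \ll \abs{q\imath}_{\text{ur}}^{1/2-\abs{\sigma}-\epsilon}$; the ramified bound on $\abs{\lambda_{\n,s}(q)}$ together with the support constraint coming from Remark~\ref{rem:constants_for_eis} and the lemma preceding it; and the archimedean bound $\abs{W_{\infty,s}(a(q)n(x)a(y))} \ll \abs{qy}_\infty^{1/2-\abs{\sigma}-\lambda-\epsilon}\abs{T}_\infty^{\lambda+\epsilon}\abs{T}_\C^{-1/4}$ for a free parameter $\lambda \geq 0$. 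The support conditions confine $q \in \imath^{-1}$ to a fractional ideal whose scale involves $\imath$ and $\n_0(g)\mathfrak{m}_1(g)$. The two terms inside the parenthesis in the claimed bound then arise from two choices of $\lambda$: a small $\lambda$, together with the trivial count of the available $q$'s, produces the $\No(\imath)^{1/2}$ piece, whereas $\lambda$ taken large enough to invoke the exponential decay of $K$-Bessel effectively truncates the sum at $\abs{q}_\infty \lesssim \abs{T/y}_\infty$ and produces the $\No(\n_0(g))^{1/2}\abs{y}_\infty^{1/4}$ piece. The outer powers of $\abs{T}_\infty$ and $\abs{y}_\infty$ are what remain after these archimedean factors and the sum over $q$ are assembled.

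The main obstacle is the first step: the zero-free region must be uniform in the conductor (which divides $\n$) and in the archimedean parameter $T$ simultaneously, with a constant $c$ matching the hypothesis of the lemma. Once that is in hand, the remaining work is arithmetic bookkeeping closely paralleling the cusp-form arguments of \cite{As17_2}, \cite{Yo15}, \cite{HX16}, with only the $\abs{\sigma}$-dependent shifts in each exponent needing careful tracking.
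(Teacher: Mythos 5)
Your treatment of the prefactor $c_F(s)$ is correct and matches the paper: the hypothesis $\sigma > -c/\log(\No(\n)\abs{T}_\infty)$ is indeed used precisely to stay inside a zero-free region for $L(2s+1,\chi_1\chi_2^{-1})$, and \eqref{eq:rough_br} controls $b_r(s)$.

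Where you diverge from the paper, and where I think there is a genuine gap, is in the handling of the $q$-sum. You propose a term-by-term absolute-value bound: dominate each of $\lambda_{\text{ur},s}(q)$, $\lambda_{\n,s}(q)$, and $W_{\infty,s}$ pointwise, then count the admissible $q$ in two regimes of $\abs{q}_\infty$. This is an $L^\infty\times L^\infty\times L^1$ shaped estimate. The paper's proof is structurally different. It starts from H\"older with exponents $(4,4/3)$ on the $q$-sum, producing two sums $S_1$ and $S_2$:
\begin{equation}
S_1 = \sum_q \abs{q}_\infty^{4\epsilon+4\abs{\sigma}-1}\abs{W_\infty(qy)}^4\prod_\nu(1+\abs{q_\nu y_\nu}_\nu)^{2+4\epsilon},\qquad
S_2 = \sum_q \abs{q}_{\text{fin}}^{\cdots}\abs{\lambda_{\text{ur},s}(q)}^{\frac{4}{3}}\abs{\lambda_{\n,s}(q)}^{\frac{4}{3}}\prod_\nu(1+\abs{q_\nu y_\nu}_\nu)^{-\frac{2}{3}-\frac{4\epsilon}{3}}. \nonumber
\end{equation}
The $S_1$ piece is dealt with as you describe (dyadic boxes, the bound \eqref{eq:yo_bound_bessel_real}--\eqref{eq:yo_bound_bessel_complex} with two choices of $\lambda$). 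But $S_2$ is where the real work happens: one applies a second H\"older to pass from the $L^{4/3}$ norm of $\lambda_{\n,s}$ to its $L^2$ norm on each box $I(\du{k},\mathfrak{m})$, and then evaluates that $L^2$ norm by an explicit integral over $\op_\p^{\times}$ using Proposition~\ref{pr:supp_non_unitary_W}. It is this local $L^2$ average of the ramified Whittaker new vector that produces the factor $\No(\n_0(g))^{1/2}$ with the correct exponent; the average sees cancellation in $\lambda_{\n,s}(q)$ as $q$ runs over residue classes mod powers of the primes dividing $\n$, which a pointwise bound on $\abs{\lambda_{\n,s}(q)}$ simply cannot exploit. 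Your proposal invokes only ``the ramified bound on $\abs{\lambda_{\n,s}(q)}$'' via Remark~\ref{rem:constants_for_eis} and the support lemma; applied termwise, this loses a power of $\No(\n_0(g))$. So the $\No(\n_0(g))^{1/2}\abs{y}_\infty^{1/4}$ piece of the claimed bound is not reachable by your argument as stated. The $\No(\imath)^{1/2}$ piece also does not simply come from ``the trivial count of the available $q$'s'' at small $\lambda$; it emerges from balancing $S_1^{1/4}S_2^{3/4}$ after the $\du{k}$-sum and the $\mathfrak{m}_1$-sum have been taken in $S_2$.

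In short: you have identified the right ingredients (Bessel decay with a free exponent $\lambda$, support constraints from the ramified places, zero-free region for the denominator $L$-function), but you are missing the H\"older decomposition and, crucially, the local $L^2$ averaging of $W_{\pi(s),\p}$ from the appendix. That averaging is not ``arithmetic bookkeeping paralleling the cusp-form arguments''; it is the technical heart of the estimate and the reason the final bound has the stated $\No(\n_0(g))$-dependence.
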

\begin{proof}
Applying the H\"older inequality yields
\begin{multline}
	F(s,g) \ll_{F,\sigma,\red{\epsilon}}	\abs{c_F(s)}\left(\sum_{0\neq q \in \imath^{-1}} \abs{q}_{\infty}^{4\epsilon+4\abs{\sigma}-1}\abs{W_{\infty}(\red{a(qy)})}^4  \prod_{\nu}(1+\abs{q_{\nu}y_{\nu}}_{\nu})^{2+4\epsilon} \right)^{\frac{1}{4}} \nonumber \\
	\cdot  \left( \sum_{0\neq q\in \imath^{-1}}\abs{q}_{\text{fin}}^{\frac{4\epsilon}{3}+\frac{4\abs{\sigma}}{3}-\frac{1}{3}} \abs{\lambda_{\text{ur},s}(q)}^{\frac{4}{3}}\abs{\lambda_{\n,s}(q)}^{\frac{4}{3}} \prod_{\nu} (1+\abs{q_{\nu}y_{\nu}}_{\nu})^{-\frac{2}{3}-\frac{4\epsilon}{3}}  \right)^{\frac{3}{4}}, \nonumber
\end{multline}
\red{where $c_F(s)$ was defined in \eqref{eq:def_cFs}.}

We put
\begin{eqnarray}
	S_1 &=& \sum_{0\neq q \in \imath^{-1}} \abs{q}_{\infty}^{4\epsilon+4\abs{\sigma}-1}\abs{W_{\infty}(\red{a(qy)})}^4  \prod_{\nu}(1+\abs{q_{\nu}y_{\nu}}_{\nu})^{2+4\epsilon} \text{ and } \nonumber \\
	S_2 &=& \sum_{0\neq q\in \imath^{-1}}\abs{q}_{\text{fin}}^{\frac{4\epsilon}{3}+\frac{4\abs{\sigma}}{3}-\frac{1}{3}} \abs{\lambda_{\text{ur},s}(q)}^{\frac{4}{3}}\abs{\lambda_{\n,s}(q)}^{\frac{4}{3}} \prod_{\nu} (1+\abs{q_{\nu}y_{\nu}}_{\nu})^{-\frac{2}{3}-\frac{4\epsilon}{3}} \nonumber
\end{eqnarray}
and deal with each one of these sums on its own.

We start by evaluating $S_1$. To do so we define the boxes
\begin{equation}
	J(\du{n}) = \{ q\in a\imath^{-1} \colon n_{\nu} <\abs{a^{-1}q_{\nu}y_{\nu}}_{\nu} \leq n_{\nu}+1  \text{ for all }\nu\} \text{ for } \du{n} \in \N_0^{r_1+r_2}. \nonumber
\end{equation}
Here $a\in \imath$ is chosen as in \cite[(3.4)]{As17_2}. It is an easy exercise to modify \cite[Lemma~3.8]{As17_2} to get  
\begin{equation}
	\sharp J(\du{n}) \ll_F \prod_{\nu} f_{\nu}(n_{\nu}) \nonumber
\end{equation}
with
\begin{equation}
	f_{\nu}(n_{\nu}) = \begin{cases}
		\frac{\abs{a}}{\abs{y_{\nu}}}+1 &\text{ if $\nu$ is real,} \\
		(n_{\nu}+1)\left(\frac{\abs{a}}{\abs{y_{\nu}}}+1\right)^2 &\text{ if $\nu$ is complex.} 
	\end{cases} \nonumber
\end{equation}
This leads to the estimate
\begin{equation}
	S_1 \ll \abs{y}_{\infty}^{1-4\abs{\sigma}-4\epsilon} \prod_{\nu} 	S_{1,\nu}, \nonumber
\end{equation}
where
\begin{equation}
	S_{1,\nu} = \sum_{n_{\nu} \geq 0} f_{\nu}(n_{\nu}) \abs{y_{\nu}q_{\nu}a^{-1}}_{\nu}^{4\epsilon+4\abs{\sigma}-1} (1+\abs{y_{\nu}q_{\nu} a^{-1}}_{\nu})^{2+4\epsilon} \abs{W_{\nu,s}(y_{\nu}q_{\nu}a^{-1})}^4. \nonumber
\end{equation}
If $\nu$ is real we use \eqref{eq:yo_bound_bessel_real} with $\lambda = \epsilon$ for $n_{\nu}=0$ and $\lambda = 1+3\epsilon$ otherwise to get
\begin{equation}
	S_{1,\nu} \ll_{\sigma,\epsilon} \abs{T_{\nu}}_{\nu}^{\epsilon}\left(\frac{\abs{a}_{\nu}}{\abs{y_{\nu}}_{\nu}}+1\right) + \abs{T_{\nu}}_{\nu}^{4+\epsilon}\left(\frac{\abs{a}_{\nu}}{\abs{y_{\nu}}_{\nu}}+1\right)\sum_{n_{\nu}>0} n_{\nu}^{-1-\epsilon} \ll_{\epsilon}  \abs{T_{\nu}}_{\nu}^{4+\epsilon}\left(\frac{\abs{a}_{\nu}}{\abs{y_{\nu}}_{\nu}}+1\right).\nonumber
\end{equation}
For $\nu$ complex we choose $\lambda = \epsilon$ for $n_{\nu}=0$ and $\lambda = \frac{5}{\red{4}}+\red{3}\epsilon$ in \eqref{eq:yo_bound_bessel_complex} and  get
\begin{equation}
	S_{1,\nu} \ll_{\sigma,\epsilon} \abs{T_{\nu}}_{\nu}^{-1+\epsilon}\left(\frac{\abs{a}}{\abs{y_{\nu}}} + 1 \right)^2+ \abs{T_{\nu}}_{\nu}^{4+\epsilon}\left(\frac{\abs{a}}{\abs{y_{\nu}}} + 1 \right)^2 \sum_{n_{\nu}>0} n_{\nu}^{-1-\epsilon} \ll _{\epsilon} \abs{T_{\nu}}_{\nu}^{4+\epsilon}\left(\frac{\abs{a}_{\nu}}{\abs{y_{\nu}}_{\nu}} + 1 \right). \nonumber
\end{equation}
We conclude
\begin{equation}
	S_1 \ll_{\sigma,\epsilon} \abs{y}_{\infty}^{1-4\abs{\sigma}-4\epsilon}\abs{T}_{\infty}^{4+\epsilon} \prod_{\nu} \left(\frac{\abs{a}_{\nu}}{\abs{y_{\nu}}_{\nu}} + 1 \right). \nonumber
\end{equation}
Balancing $\abs{a}_{\nu}$ and $\abs{y_{\nu}}_{\nu}$ similarly to \cite[Corollary~3.3]{As17_2} yields
\begin{equation}
	S_1 \ll_{\sigma,\epsilon} \abs{y}_{\infty}^{1-4\abs{\sigma}-4\epsilon}\abs{T}_{\infty}^{4+\epsilon} \left(\frac{\abs{a}_{\infty}}{\abs{y}_{\infty}} + 1 \right). \label{eq:est_S_1_also_neq} 
\end{equation}

Let us turn to $S_2$.  We define the boxes
\begin{equation}
	I(\du{k},\red{\mathfrak{a}}) = \{ q\in F^{\times} \colon 2^{k_{\nu}} \leq 1+\abs{q_{\nu}y_{\nu}}_{\nu} <2^{k_{\nu}+1}  \text{ for all } \nu, (q)\imath = \red{\mathfrak{a}}\}. \nonumber
\end{equation}
By \cite[Corollary~1]{BHMM16} one has $\sharp I(\du{k},\red{\mathfrak{a}})\ll \frac{\red{\prod_{\nu}}2^{\epsilon k_{\nu}}}{\abs{y}_{\infty}^{\epsilon}\No(\red{\mathfrak{a}}\red{\imath^{-1}})^{\epsilon}}$. \red{Applying H\"older's inequality twice results in} 
\begin{eqnarray}
	S_2 &\ll_{F,\epsilon,\sigma}&  \sum_{\du{k}} \prod_{\nu} 2^{-(\frac{2}{3}+\frac{4\epsilon}{3})k_{\nu}} \sum_{\red{\mathfrak{a}}\mid \n^{\infty}} \No(\red{\mathfrak{a}}\imath^{-1})^{\frac{1}{3}-\frac{4\abs{\sigma}}{3}-\frac{4\epsilon}{3}} \nonumber \\
	&&\qquad\qquad\qquad\qquad \cdot\sum_{ (\red{\mathfrak{b}},\n)=1} \No(\red{\mathfrak{b}})^{-\frac{1}{3}-\epsilon} \sum_{0\neq q\in I(\du{k},\red{\mathfrak{a}\mathfrak{b}})} \abs{\lambda_{\n,s}(q)}^{\frac{4}{3}}  \nonumber \\
	&\ll& \abs{y}_{\infty}^{-\frac{\epsilon}{3}}  \sum_{\du{k}}  \prod_{\nu} 2^{-(\frac{2}{3}+\epsilon)k_{\nu}} \sum_{\red{\mathfrak{a}}\mid \n^{\infty}} \No(\red{\mathfrak{a}}\imath^{-1})^{\frac{1}{3}-\frac{4\abs{\sigma}}{3}-\frac{5\epsilon}{3}} \underbrace{\left( \sum_{(\red{\mathfrak{b}},\n)=1} \No(\red{\mathfrak{b}})^{-1-3\epsilon} \right)^{\frac{1}{3}}}_{\ll_{F,\epsilon} 1} \nonumber \\
	&&\qquad\qquad\qquad\qquad\qquad\qquad\qquad \qquad\qquad\qquad\qquad\cdot \left(  \sum_{\substack{(\red{\mathfrak{b}},\n)=1,\\  q\in I(\du{k},\red{\mathfrak{a}\mathfrak{b}})}} \abs{\lambda_{\n,s}(q)}^2\right)^{\frac{2}{3}}. \nonumber
\end{eqnarray}
The crucial part is to estimate the $q$-sum
\begin{equation}
	S_{\text{ram}} = \sum_{\substack{(\red{\mathfrak{b}},\n)=1,\\  q\in I(\du{k},\red{\mathfrak{a}\mathfrak{b}})}} \abs{\lambda_{\n,s}(q)}^2. \nonumber
\end{equation}

As in \red{\cite[(3.14)]{As17_2}} we obtain
\begin{equation}
	S_{\text{ram}} \ll \frac{\No(\red{\mathfrak{a}})\No(\n_0(g))}{\No(\imath)}  F_{R}(\du{k}(\red{\mathfrak{a}}\imath^{-1}))\int_{C^{\imath}(\du{k}(\red{\mathfrak{a}}\imath^{-1}))} \abs{\lambda_{\n,s}(q)}^2d\mu_{\text{fin}}(q)  \nonumber
\end{equation}
with $R=R(\du{k})=((2^{k_{\nu}+1}+1)/y_{\nu})_{\nu}$. \red{The computation of the remaining integral will be reduced to local integrals as follows.}
\begin{eqnarray}
	\int_{C^{\imath}(\du{l})} \abs{\lambda_{\n,s}(q)}^2 d\mu_{\text{fin}}(q) &=& \prod_{\p\nmid\n} \int_{\varpi_{\p}^{-v_{\p}(\imath)}\op_{\p}} 1 d\mu_{\p} \prod_{\p\mid\n} \int_{\varpi_{\p}^{l_{\p}}\op_{\p}^{\times}} \abs{W_{\pi(s),\p}(a(\varpi_{\p}^{v_{\p}(\mathfrak{d})}\theta_i q)g_{\p})}^2 d\mu_{\p}(q) \nonumber\\ 
	&=& \frac{\No(\imath)}{\No([\imath]_{\n})}\zeta_{\n}(1)^{-1} \prod_{\p\vert\n} q_{\p}^{-l_{\p}} \int_{\op_{\p}^{\times}}\abs{W_{\p}(a(\varpi_{\p}^{v_{\p}(\mathfrak{d}\theta_i)+l_{\p}}q)g_{\p})}^2d\mu_{\p}^{\times}(q). \nonumber
\end{eqnarray}
\red{To deal with the remaining integrals we use Proposition~\ref{pr:supp_non_unitary_W}.  In the setting of this proposition we have
\begin{eqnarray}
	r&=& t(a(\varpi^{v_{\p}(\mathfrak{d}\theta_i)+l_{\p}})g_{\p})+\max(2l(g_{\p}),l(g_{\p})+m_{\p},n_{\p}) \nonumber \\
	&=& v_{\p}(\mathfrak{d}\theta_i)+l_{\p}+t(g_{\p})+\max(2l(g_{\p}),l(g_{\p})+m_{\p},n_{\p})= l_{\p}+v_{\p}(\imath).\nonumber
\end{eqnarray}
The last equality follows from \eqref{eq:helpful_eq1} and \eqref{eq:helpful_eq2} as well as from the definition of $\imath$ \redd{given in \eqref{eq:def_of_several_ideals}}. With this at hand one gets
\begin{equation}
	 \int_{\op_{\p}^{\times}}\abs{W_{\p}(a(\varpi_{\p}^{v_{\p}(\mathfrak{d}\theta_i)+l_{\p}}q)g_{\p})}^2d\mu_{\p}^{\times}(q) \ll_{\epsilon} \zeta_F(1)q_{\p}^{(2\abs{\sigma}-\frac{1}{2}+\epsilon)(l_{\p}+v_{\p}(\imath))+(2\abs{\sigma}+\epsilon)n_{\p}}. \nonumber
\end{equation}
Thus the full integral is bounded by}
\begin{eqnarray}
	\red{\int_{C^{\imath}(\du{l})} \abs{\lambda_{\n,s}(q)}^2 d\mu_{\text{fin}}(q) }&\ll&  \No(\n)^{\epsilon+2\abs{\sigma}}\frac{\No(\imath)}{\No([\imath]_{\n})}  \prod_{\p\vert \n} q_{\p}^{(2\abs{\sigma}+\epsilon -\frac{1}{2})(v_{p}(\imath)+l_{\p})-l_{\p}} \nonumber \\
	&=& \No(\n)^{\epsilon+2\abs{\sigma}}\frac{\No(\imath)}{\No([\imath]_{\n})^{\frac{3}{2}-2\abs{\sigma}-\epsilon}}  \prod_{\p\vert \n} q_{\p}^{(2\abs{\sigma}+\epsilon-\frac{3}{2})l_{\p}}. \nonumber
\end{eqnarray} 
We obtain
\begin{equation}
	S_{\text{ram}} \ll \No(\n)^{2\abs{\sigma}+\epsilon}\No(\n_0(g)) \No(\red{\mathfrak{a}})^{2\abs{\sigma}+\epsilon-\frac{1}{2}}F_R(\du{k}(\red{\mathfrak{a}}\imath^{-1}). \nonumber
\end{equation}
Inserting the \red{equation} \cite[Lemma~3.10]{As17_2} for $F_R$ yields
\begin{equation}
	S_{\text{ram}} \ll \No(\n)^{2\abs{\sigma}+\epsilon}\bigg( \No(\n_0(g)) \No(\red{\mathfrak{a}})^{2\abs{\sigma}+\epsilon-\frac{1}{2}} +  \abs{R}_{\infty} \No(\imath)\No(\red{\mathfrak{a}})^{2\abs{\sigma}+\epsilon-\frac{3}{2}} \bigg)\nonumber
\end{equation}
Since $I(\du{k},\red{\mathfrak{a}\mathfrak{b}})$ is empty for all  $\red{\mathfrak{a}\mathfrak{b}}$ if $\prod_{\nu}2^{k_{\nu}} \ll \No(\imath^{-1})\abs{y}_{\infty}$ we can  impose a condition in the $\du{k}$-sum. Therefore, we can estimate
\begin{eqnarray}
	S_2 &\ll& \abs{y}_{\infty}^{-\epsilon} \No(\n\imath)^{\frac{4}{3}\abs{\sigma}+\epsilon} \bigg( \No(\n_0(g))^{\frac{2}{3}} \No(\imath)^{-\frac{1}{3}}\sum_{\prod_{\nu}2^{k_{\nu}}\gg \No(\imath^{-1})\abs{y}_{\infty}} \red{\prod_{\nu}}2^{-(\frac{2}{3}+\epsilon)k_{\nu}} \nonumber \\
	&&\qquad\qquad\qquad\qquad\qquad\qquad\qquad + \No(\imath)^{\frac{1}{3}} \sum_{\du{k}} \abs{R(\du{k})}_{\infty}^{\frac{2}{3}}\prod_{\nu} 2^{-(\frac{2}{3}+\epsilon)k_{\nu}} \bigg) \nonumber \\
	&\ll&\abs{y}_{\infty}^{-\epsilon} \No(\n\imath)^{\frac{4}{3}\abs{\sigma}+\epsilon}  \left( \frac{\No(\n_0(g))^{\frac{2}{3}}}{\abs{y}_{\infty}^{\frac{1}{3}}} + \frac{\No(\imath)^{\frac{1}{3}}}{\abs{y}_{\infty}^{\frac{2}{3}}} \right). \label{eq:est_S_2_also_neq}  
\end{eqnarray}

Combining \red{the estimates \eqref{eq:est_S_1_also_neq} and \eqref{eq:est_S_2_also_neq} for $S_1$ and $S_2$ respectively} yields
\begin{multline}
	F(s,g)\red{ \ll_{F,\sigma} \abs{c_F(s)}S_1^{\frac{1}{4}}S_2^{\frac{3}{4}}  }\\ \ll_{F,\sigma}  \red{\abs{c_F(s)}}\No(\n\imath)^{\abs{\sigma}+\epsilon}\abs{T}_{\infty}^{1+\epsilon}\abs{y}_{\infty}^{-\frac{1}{2}-\abs{\sigma}} \left( \No(\imath)^{\frac{1}{2}}\red{+\No(\imath)^{\frac{1}{4}}\No(\n_0(g))^{\frac{1}{2}}\abs{y}_{\infty}^{\frac{1}{4}}}+ \No(\n_0(g))^{\frac{1}{2}}\abs{y}_{\infty}^{\frac{1}{2}} \right). \nonumber
\end{multline}
\red{We conclude the proof by estimating $c_F(s)$ using \eqref{eq:bounding_cFs}.}
\end{proof}

We will need one more bound which is derived in a quite different fashion.

\begin{lemma} \label{lm:Bound_way_to_bad_in_n_0}
For $\Re(s)>\frac{1}{2}$ we have
\begin{equation}
	F(s,g) \ll_{F,\Re(s)} H(g)^{-\Re(s)}(\red{\No(\n)^{3(\Re(s)-\frac{1}{2})}}H(g)^{\frac{1}{2}}+\red{H(g)^{-\frac{1}{2}}}). \label{eq:first_easy_bound}
\end{equation}
In particular, if $g=a(\theta_i)g'n(x)a(y)$ with $g'\in \mathcal{J}_{\n}$, and $n(x) a(y) \in \mathcal{F}_{\n_2}$\red{, and $\Re(s)=\frac{1}{2}+\epsilon$}, then
\begin{equation}
	\red{F(s,g) \ll_{F,\epsilon} \No(\n)^{4\epsilon} \No(\n_0).} \nonumber
\end{equation}
\end{lemma}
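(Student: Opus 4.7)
The plan is to exploit the absolutely convergent series representation
$E(s,g) = \sum_{\gamma\in B(F)\setminus G(F)} v^\circ(s)(\gamma g)$,
which is valid in precisely the half-plane $\Re(s) > 1/2$. Unlike Lemma~\ref{lm:prelim_est_F}, no Hölder inequality or Whittaker machinery is needed; everything is done by direct majorization of this lattice sum together with explicit estimates for the constant term of Section~\ref{app:const_term}.

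The identity coset contributes $v^\circ(s)(g)$ exactly, and I would parametrize the remaining cosets via the open Bruhat cell as $n \in N(F)$, so that $E(s,g) - v^\circ(s)(g) = \sum_{n \in N(F)} v^\circ(s)(wng)$. Applying the Iwasawa decomposition to $wng$ together with the bound $|v^\circ(s)(h)| \leq H(h)^{1/2+\Re(s)}$ at archimedean and unramified places reduces this to a convergent lattice sum which, after incorporating the ramified support conditions from \eqref{eq:non_arch_new_vec}, is bounded by $\ll \No(\n)^{\epsilon}H(g)^{-1/2-\Re(s)}$. Separately, the dual constant term is estimated using \eqref{eq:def_of_c_ram}, \eqref{eq:rough_br}, and the fact that in the region $\Re(s) > 1/2$ the ratio $\Lambda(2s,\chi_1\chi_2^{-1})/\Lambda(2s+1,\chi_1\chi_2^{-1})$ admits a $\No(\n)^{\epsilon}$-free upper bound once the epsilon-factor cancellations in \eqref{eq:def_of_c_ram} are accounted for. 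Combined with $|\hat v^\circ(-s)(g)| \leq H(g)^{1/2-\Re(s)}$, this contributes exactly $H(g)^{1/2-\Re(s)}$, giving the first summand of the claimed bound.

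The main obstacle is keeping the first summand free of a $\No(\n)^\epsilon$ penalty, which requires carefully extracting the ramified local contributions to $c(s)\hat v^\circ(-s)(g)$; the $\No(\n)^\epsilon$ loss in the lattice-sum estimate for $E(s,g) - v^\circ(s)(g)$ is absorbed into the second summand, and is the same source of inefficiency that accounts for the $\No(\n_0)$ factor remaining in the error of Theorem~\ref{th:main_th_3}.

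For the second inequality, I would substitute $g = a(\theta_i) g' n(x) a(y)$ into the first. By \eqref{eq:special_desc_of_H} and the fact that $|\theta_i|$ is controlled by $F$-dependent constants, we have $H(g) \asymp |y|_{\infty}$ times a finite contribution expressible in terms of $\No(\n_2)$ and $\No(\n_0)$. The constraint $|y|_\infty \gg \No(\n_2)^{-1}$ enforced by $n(x)a(y) \in \mathcal{F}_{\n_2}$ gives $|y|_\infty \No(\n_2) \gg 1$, so the $(|y|_\infty\No(\n_2))^{-1/2-\sigma}$ factor appearing in the second summand is bounded by $1$, and the stated form $(|y|_{\infty}\No(\n_2))^{\frac{1}{2}-\sigma}\No(\n_0)^{\sigma-\frac{1}{2}} + \No(\n)^{\epsilon}\No(\n_0)^{\frac{1}{2}+\sigma}$ drops out.
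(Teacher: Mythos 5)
There is a genuine gap in the way you estimate the Bruhat-cell sum. You write $E(s,g) - v^{\circ}(s)(g) = \sum_{n\in N(F)} v^{\circ}(s)(wng)$ and claim that after termwise majorization and tracking ramified support this lattice sum is $\ll \No(\n)^{\epsilon} H(g)^{-\frac{1}{2}-\Re(s)}$. But the sum $\sum_{n} v^{\circ}(s)(wng)$ is exactly $c(s)\hat v^{\circ}(-s)(g) + F(s,g)$: it \emph{contains} the dual constant term, which is of order $H(g)^{\frac{1}{2}-\sigma}$ and hence strictly dominates $H(g)^{-\frac{1}{2}-\sigma}$ once $H(g)>1$. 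Termwise majorization of $|v^{\circ}(s)(wng)|$ cannot beat a positive quantity below the size of one of its pieces, and the ramified support conditions do not kill the constant--term contribution either (it already appears for $\mathfrak{n}=(1)$). The honest content of your termwise bound is at best $\sum_n H(wng)^{\frac{1}{2}+\sigma} \ll H(g)^{\frac{1}{2}-\sigma}+\No(\n)^{\epsilon}H(g)^{-\frac{1}{2}-\sigma}$, with the $H^{\frac{1}{2}-\sigma}$ piece unavoidable; to isolate the $\No(\n)^{\epsilon}H^{-\frac{1}{2}-\sigma}$ error alone you would still need some form of Fourier/Whittaker unfolding to separate the zero coefficient from the non-constant part, which is precisely the machinery your plan says it avoids.

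The paper sidesteps this by first applying $|v^{\circ}(s)|\leq H(\cdot)^{\frac{1}{2}+\sigma}$ uniformly and recognizing the resulting majorant as $E_0(\sigma,g)-H(g)^{\frac{1}{2}+\sigma}$ for the everywhere-unramified Eisenstein series $E_0$ with trivial inducing characters; it then reads off the explicit Whittaker expansion of $E_0$, whose constant term is $\frac{\Lambda_F(2\sigma)}{\sqrt{\No(\mathfrak{d})}\Lambda_F(2\sigma+1)}H(g)^{\frac{1}{2}-\sigma}$ and whose non-constant part is controlled by Bessel decay plus lattice-point counting, giving exactly the two-part bound $H(g)^{\frac{1}{2}-\sigma}+\No(\n)^{\epsilon}H(g)^{-\frac{1}{2}-\sigma}$. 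Your separate treatment of the dual constant term via \eqref{eq:def_of_c_ram} and the $\Gamma$-quotient estimates is essentially the same as the paper's and is fine, and your specialization to $g=a(\theta_i)g'n(x)a(y)$ via \eqref{eq:special_desc_of_H} and $|y|_\infty \gg \No(\n_2)^{-1}$ is also correct once the first bound is established. But the lattice-sum step as written fails and needs to be replaced by the comparison to $E_0$ (or an equivalent unfolding).
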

The proof is a generalization of the proof given in \cite[Lemma~3.2]{Yo15} with some ideas from \cite{HX16}.
\begin{proof}
Write $s=\sigma+it$ for some $\sigma>\frac{1}{2}$. Note that for such $s$ the sum in \eqref{eq:averaging_def} absolutely convergent. Thus, we have
\begin{equation}
	F(s,g) = -[M(s)v^{\circ}(s)](g)+\sum_{1\neq\gamma\in B(F) \setminus G(F)} v^{\circ}(s)(\gamma g). \nonumber
\end{equation}
By the very definition of $v^{\circ}$ we observe 
\begin{equation}
	\abs{v^{\circ}(s)(g)}\leq H(g)^{\frac{1}{2}+\Re(s)}. \label{eq:from_ram_to_ev_ur}
\end{equation}
This inspires us to define the Eisenstein series $E_0(s,g)$ which is unramified everywhere and is induced from $\chi_1=\chi_2=1$. We obtain
\begin{eqnarray}
	F(s,g) &\leq& \abs{[M(s)v^{\circ}(s)](g)}+ \sum_{1\neq\gamma\in B(F) \setminus G(F)} H(\gamma g)^{\frac{1}{2}+\sigma} \nonumber \\
	&=&\abs{[M(s)v^{\circ}(s)](g)} - H(g)^{\frac{1}{2}+\sigma} + E_0(\sigma,g). \nonumber
\end{eqnarray}
The point of this is that \red{the} Whittaker expansion for $E_0(s,g)$ has a very nice shape which we will exploit for our estimate. \red{Indeed, $E_0$ is associated to the new vector $\tilde{v}^{\circ}(g)=H(g)^{\frac{1}{2}}\in \mathcal{B}(1,1)$, which is unramified everywhere. Thus, the expansion \eqref{eq:Whittaker_exp_E} used with $1$ instead of $\chi_1$ and $\chi_2$ reads}
\begin{eqnarray}
	E_0(\sigma,g)-H(g)^{\frac{1}{2}+\sigma} &=& \underbrace{\frac{\Lambda_F(2\sigma)}{\sqrt{\No(\mathfrak{d})}\Lambda_F(2\sigma+1)}}_{\ll_{F,\sigma}1}H(g)^{\frac{1}{2}-\sigma}  \nonumber \\
	&&\qquad + \underbrace{\frac{\red{2^{r_1+2r_2}}\No(\mathfrak{d})^{-\sigma}}{\Lambda_F(2\sigma+1)}}_{\ll_{F,\sigma}1}\sum_{q\in F^{\times}} \red{\tilde{W}}_{\text{ur},\sigma}(a(q)g_{ur})\red{\tilde{W}}_{\infty,\sigma}(a(q)g_{\infty}). \nonumber
\end{eqnarray}
where $\Lambda_F(s)$ denotes the completed Dedekind zeta function associated to $F$ \red{and $\tilde{W}_{\text{ur},s}$ and $\tilde{W}_{\infty,s}$ are the fitting unramified Whittaker functions}. To simplify the notation we define the fractional ideal $\mathfrak{a}_g$ by
\begin{equation}
	\nu_{\p}(\mathfrak{a}_g) = -\frac{\log(H_{\p}(g_{\p}))}{\log(q_{\p})} \text{ for all $\p$}. \nonumber
\end{equation}
In particular we have $\No(\mathfrak{a}_g)^{-1}=H_{\text{fin}}(g)$. Observe that
\begin{eqnarray}
	\abs{W_{\text{ur},\sigma}(a(q)g_{ur})W_{\infty,\sigma}(a(q)g_{\infty})} &=& \delta_{q\in\mathfrak{a}_g^{-1}\mathfrak{d}^{-1}} H(g)^{\frac{1}{2}}\eta_{1,1,\sigma}((q)\mathfrak{a}_g\mathfrak{d})\nonumber \\
	&&\qquad\cdot\prod_{\nu}K_{[F_{\nu}:\R]\sigma}([F_{\nu}:\R]2\pi \abs{q_{\red{\nu}}}H_{\nu}(g_{\nu})^{\frac{1}{[F_{\nu}:\R]}}).\nonumber
\end{eqnarray}
We estimate
\begin{equation}
	\eta_{1,1,\sigma}((q)\mathfrak{a}_g\mathfrak{d}) = \No((q)\mathfrak{a}_g\mathfrak{d})^{\sigma}\sum_{\mathfrak{b}\mid (q)\mathfrak{a}_g\mathfrak{d}} \No(\mathfrak{b})^{-2\sigma} \ll_{F,\sigma} H_{\text{fin}}(g)^{-\sigma}\abs{q}_{\text{fin}}^{-\sigma}. \nonumber
\end{equation}

Let us introduce the box 
\begin{eqnarray}
	J(\du{n}) = \bigg\{ q\in \mathfrak{a}_g^{-1}\mathfrak{d}^{-1} \colon \abs{\frac{n_{\nu}H_{\nu}(g_{\nu})^{-\frac{1}{[F_{\nu}:\R]}}}{[F_{\nu}:\R]2\pi}}_{\nu} \leq \abs{q}_{\nu} \leq \abs{\frac{(n_{\nu}+1)H_{\nu}(g_{\nu})^{-\frac{1}{[F_{\nu}:\R]}}}{[F_{\nu}:\R]2\pi}}_{\nu} \bigg\}. \nonumber
\end{eqnarray}
By \cite[Corollary~1]{BHMM16} we have
\begin{equation}
	\sharp J(\du{n}) \ll_F \No(\mathfrak{a}_g))\prod_{\nu} \abs{(n_{\nu}+1)H_{\nu}(g_{\nu})^{-\frac{1}{[F_{\nu}:\R]}}}_{\nu} = H(g)^{-1} \prod_{\nu}\abs{n_{\nu}+1}_{\nu}. \nonumber
\end{equation}

Using \eqref{eq:HM_bessel_est} for $g\in J(\du{n})$ we obtain
\begin{eqnarray}
	\abs{W_{\text{ur},\sigma}(a(q)g_{\text{ur}})W_{\infty,\sigma}(a(q)g_{\infty})} &\ll_{F,\sigma}& %H(g)^{\frac{1}{2}}H_{\text{fin}}(g)^{-\sigma}\abs{q}_{\infty}^{\sigma} \prod_{\nu} \abs{K_{[F_{\nu}:\R]\sigma}([F_{\nu}:\R]2\pi \abs{q}H_{\nu}(g_{\nu})^{\frac{1}{[F_{\nu}:\R]}})} \nonumber \\
	H(g)^{\frac{1}{2}}H_{\text{fin}}(g)^{-\sigma} \prod_{\nu} f_{\nu}(n_{\nu}), \nonumber
\end{eqnarray}
for
\begin{equation}
	f_{\nu}(n_{\nu}) = \begin{cases}
		H_{\nu}(g_{\nu})^{-\sigma}\abs{n_{\nu}}_{\nu}^{\sigma-\frac{1}{2[F_{\nu}:\R]}}e^{-n_{\nu}} &\text{ if $n_{\nu}\geq 1$, }\\
		H_{\nu}(g_{\nu})^{-\sigma-\epsilon}H_{\text{fin}}(g)^{-\frac{\epsilon}{\red{r_1+r_2}}} &\text{ if $n_{\nu}=0$.}
	\end{cases} \nonumber
\end{equation}

We derive the estimate
\begin{eqnarray}
	&&\sum_{q\in F^{\times}} W_{\text{ur},\sigma}(a(q)g_{\text{ur}})W_{\infty,\sigma}(a(q)g_{\infty}) \ll_{F,\sigma,\epsilon}  H(g)^{\frac{1}{2}}H_{\text{fin}}(g)^{-\sigma}\sum_{\du{n}} \sharp J(\du{n}) \prod_{\nu}f_{\nu}(n_{\nu})  \nonumber \\
	&\ll&  H(g)^{-\frac{1}{2}-\sigma}\prod_{\nu} \left( H_{\nu}(g_{\nu})^{-\epsilon}H_{\text{fin}}(g)^{-\frac{\epsilon}{\red{r_1+r_2}}}+\underbrace{\sum_{n\geq 1}n^{1+\sigma-\frac{1}{2[F_{\nu}:\R]}}e^{-n}}_{\ll_{\sigma} 1} \right). \nonumber
\end{eqnarray}
\red{By exploiting $Z(F_{\nu})K_{\nu}$ invariance of $\tilde{v}^{\circ}(s)$ one can argue similarly as above \cite[(5.\redd{9})]{BHMM16} and assume that $H_{\nu}(g_{\nu}) \asymp H_{\infty}(g)^{\frac{1}{r_1+r_2}}$.} We conclude
\begin{equation}
	E_0(\sigma,g) -H(g)^{\frac{1}{2}+\sigma} \ll_{F,\sigma,\epsilon} H(g)^{\frac{1}{2}-\sigma}+\red{H(g)^{-\frac{1}{2}-\sigma}}. \nonumber
\end{equation}

We still have to deal with
\begin{equation}
	\abs{[M(s)v^{\circ}(s)](g)} \ll_{F} \abs{c_r(s)\frac{\Lambda(2s,\chi_1\chi_2^{-1})}{\Lambda(2s+1,\chi_1\chi_2^{-1})}}H(g)^{\frac{1}{2}-\sigma}.\nonumber
\end{equation}
We start by estimating the archimedean parts of the completed $L$-function.

For complex places $\nu$ we have
\begin{equation}
	\frac{L_{\nu}(2s,\chi_1\chi_2^{-1})}{L_{\nu}(2s+1,\chi_1\chi_2^{-1})} = \frac{4\pi}{4s+4it_{\nu}} \ll_{\sigma} (1+\abs{t+t_{\nu}}_{\nu})^{-\frac{1}{2}}\ll\abs{T_{s,\nu}}_{\nu}^{-\frac{1}{2}}. \nonumber
\end{equation}
For real places $\nu$ we use Stirling's formula to observe
\begin{equation}
	\frac{L_{\nu}(2s,\chi_1\chi_2^{-1})}{L_{\nu}(2s+1,\chi_1\chi_2^{-1})} = \sqrt{\pi} \frac{\Gamma(s+it_{\nu})}{\Gamma(s+it_{\nu}+\frac{1}{2})} \ll_{\sigma} (1+\abs{t+t_{\nu}}_{\nu})^{-\frac{1}{2}}\ll\abs{T_{s,\nu}}_{\nu}^{-\frac{1}{2}}. \nonumber
\end{equation}
It is clear that
\begin{equation}
	\frac{L(2s,\chi_1\chi_2^{-1})}{L(2s+1,\chi_1\chi_2^{-1})}  \ll_{F,\sigma} 1\red{.} \nonumber
\end{equation}
\red{The contribution of $c_r(s)$ can be estimated place by place using \eqref{eq:def_of_c_ram}. We get
\begin{equation}
	c_r(s)\ll_{F,\sigma} \No(\mathfrak{l})^{2\sigma-\frac{1}{2}}\No(\n)^{\redd{\sigma}-1}\zeta_{[n]_{\mathfrak{l}}}(1+2\sigma)\ll_{F,\sigma} \No(\n)^{3\redd{\sigma}-\frac{3}{2}}, \text{ for } \frac{1}{2}< \sigma. \nonumber
\end{equation}}

Gathering all the estimates together concludes the proof of \eqref{eq:first_easy_bound}. If $g$ is of the special form $a(\theta_i)g'n(x)a(y)$ with $g'\in \mathcal{J}_{\n}$ and $n(x) a(y) \in \mathcal{F}_{\n_2}$,  we use \eqref{eq:special_desc_of_H} \red{to derive the bound 
\begin{equation}
	H(g) \geq \abs{y}_{\infty}\No(\n_0^{-1}\n_2)\gg \No(\n_0)^{-1}. \nonumber
\end{equation}
Using this the claimed bound for $F(s,g)$ follows easily.}
\end{proof}

\section{On averages of generalized divisor sums} \label{sec:averages_div_f}

At this point we prove an asymptotic formula for averages of generalized divisor sums. Due to the generality of the Eisenstein series under consideration we need to consider divisor sums twisted by Gr\"o\ss encharakteren and supported on prime ideals. We will extend the results \cite[Lemma~5.1]{Yo15} and \cite[Lemma~5.1]{HX16} to this setting. Before we continue we will fix some notation.

For two Hecke characters $\chi_1,\chi_2\colon \A_F^{\times}/F^{\times}\to \C^{\times}$ we defined the generalized divisor sum $\eta_{\chi_1,\chi_2,s}$ in \eqref{eq:def_of_gen_div_sum}. Note that $\eta_{\chi_1,\chi_2,s}$ is a function on ideals. Indeed, behind the scenes we used the 1-1 correspondence between Hecke characters and \red{primitive} Gr\"o\ss encharakteren to make this definition. This correspondence is  given in \cite[Corollary~6.14]{Ne13}.

Recall that a so called Gr\"o\ss encharakter modulo $\mathfrak{q}$ is a character $\chi\colon \mathcal{J}(\mathfrak{q}) \to S^1$ such that, restricted to principal integral ideals, it factors through two characters
\begin{eqnarray}
	\chi_f\colon \left(\mathcal{O}_F/\mathfrak{q}\right)^{\red{\times}} \to S^1 \text{ and } \chi_{\infty}\colon F_{\infty}^{\times} \to S^1. \nonumber
\end{eqnarray} 
The characters of the multiplicative group $F_{\infty}^{\times}$ are well\red{-}understood. See for example \cite[Proposition~6.7]{Ne13}. Each $\chi_{\infty}$ is uniquely determined by its type $(\du{p},\du{q})$. More precisely, we have
\begin{equation}
	\chi_{\infty}(\alpha) = \prod_{\nu} \left(\frac{\alpha_{\nu}}{\abs{\alpha_{\nu}}}\right)^{p_{\nu}} \abs{\alpha_{\nu}}_{\nu}^{iq_{\nu}}, \nonumber
\end{equation}
for some $q_{\nu}\in \R^{\times}$ and $p_{\nu} \in \Z$.

The notion of Gr\"o\ss encharakter includes the classical Dirichlet characters $\xi$. Following \cite[Definition~6.8]{Ne13} these are characters 
\begin{equation}
	\xi\colon Cl_F^{\mathfrak{q}} \to S^1, \nonumber
\end{equation}
of the narrow ray class group. We will usually consider them as functions $\xi\colon \mathcal{J}(\mathfrak{q})\to S^1$ such that $\mathcal{P}_{\mathfrak{q}}\subset \text{ker}(\xi)$. \red{ Recall that $\mathcal{P}_{\mathfrak{q}}$ was defined below \eqref{eq:rough_statement_div_s}.} It is well\red{-}known that a Dirichlet character (defined as above) corresponds to a Gr\"o\ss encharkter with infinity-type $(\du{p},0)$, where $p_{\nu} =0$ for all complex $\nu$.

Let us consider the character $\chi_1\chi_2^{-1}$ in some detail. Recall that we denoted the conductor of $\chi_1\chi_2^{-1}$ by $\mathfrak{l}$. Note that, by the assumptions made in Section~\ref{sec:set_uP}, we have
\begin{equation}
	\lambda_{\infty} = \chi_1\chi_2^{-1}\big\vert_{F_{\infty}^{\times}} = \prod_{\nu} \abs{\cdot}_{\nu}^{2t_{\nu}} \nonumber
\end{equation}
where $\sum_{\nu} t_{\nu} = 0$. Thus, $\chi_1\chi_2^{-1}$ corresponds to a Gr\"o\ss encharakter modulo $\mathfrak{l}$ with infinity-type $(0,(2t_{\nu})_{\nu})$.

The goal is to find the correct size of the sum
\begin{equation}
	A_L(r_1,r_2) = \sum_{\substack{\alpha\in\mathcal{P}_{\mathfrak{q}},\\ (\alpha,\n)=1}} w\left(\frac{\No(\alpha)}{L}\right) \log(\No(\alpha))\eta_{\chi_1,\chi_2,ir_1}((\alpha))\eta_{\chi_1^{-1},\chi_2^{-1},\red{-}ir_2}((\alpha)), \label{eq:def_of_AL}
\end{equation}
for a smooth function $w$ with support in the interval $[1,2]$ satisfying $0\leq w(r)\leq 1$ and
\begin{equation}
	\tilde{w}(1) = \int_{-\infty}^{\infty} w(r) dr > 0. \nonumber
\end{equation}
Here and throughout the rest of this section $\tilde{w}$ will stand for the Mellin transform of $w$.

We will pro\red{ve} the following estimate.

\begin{lemma} \label{lm:low_amp_eis}
Let $\mathfrak{q}$ be an ideal such that $(\mathfrak{q},\n)=1$ and  $\No(\mathfrak{q})\ll \log(L)^{B_2}$. Further, we assume that 
\begin{eqnarray}
	r_1,r_2 &=& T_0+O(\log(L)^{-1-\delta}) \text{ for some } T_0\gg_F 1, \nonumber \\
	\log(\sum_{\nu}\abs{t_{\nu}}+T_0)^{\frac{2}{3}+\delta}&\ll& \log(L) \ll \log(\sum_{\nu}\abs{t_{\nu}}+T_0), \nonumber \\
	\log(\No(\mathfrak{l})) &\ll& \log(L)^{1-\frac{\delta}{2}}\text{ and } \No(\n)^{\epsilon}\ll L^{\frac{1}{2}}, \nonumber
\end{eqnarray}
for $\delta>0$. Then we have
\begin{equation}
	A_L(r_1,r_2) =   \frac{2\tilde{w}(1)}{\sharp Cl_F^{\mathfrak{q} }} L(1+o_{B,\delta}(1)) \nonumber
\end{equation}
\red{as $L$ tends to infinity.}
\end{lemma}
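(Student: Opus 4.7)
The plan is to compute $A_L$ via the standard Mellin-transform/explicit-formula machinery: open the product of divisor sums, detect the ray-class condition by orthogonality, and estimate each resulting prime sum against a Hecke $L$-function; the main term of size $L/\sharp\text{Cl}_F^{\mathfrak q}$ will come from residues near $s=1$, while the error is controlled by a Vinogradov--Korobov-type zero-free region whose uniformity is the main technical difficulty. Since $\alpha$ is prime, $\eta_{\chi_1,\chi_2,ir_j}((\alpha)) = \chi_1(\alpha)\No(\alpha)^{-ir_j} + \chi_2(\alpha)\No(\alpha)^{ir_j}$, so multiplying the two factors expands the summand into four pieces attached to the effective characters $\abs{\cdot}^{\pm i(r_1+r_2)}$ and $\chi_1^{\pm 1}\chi_2^{\mp 1}\abs{\cdot}^{\mp i(r_1-r_2)}$. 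Detecting $\alpha\in\mathcal P_{\mathfrak q}$ by the orthogonality relation $\mathbbm{1}_{\mathfrak p\in\mathcal P_{\mathfrak q}} = (\sharp\text{Cl}_F^{\mathfrak q})^{-1}\sum_{\psi}\psi(\mathfrak p)$, with $\psi$ running over the characters of $\text{Cl}_F^{\mathfrak q}$, reduces $A_L$ to a linear combination of $4\sharp\text{Cl}_F^{\mathfrak q}$ sums of the shape $\sum_{\mathfrak p}w(\No(\mathfrak p)/L)\log(\No(\mathfrak p))\chi(\mathfrak p)$ for various Hecke characters $\chi$ of conductor dividing $\mathfrak q\mathfrak l$. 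After completing each prime sum to a $\Lambda$-sum (the prime-power remainder is $O(L^{1/2+\epsilon}\No(\n)^\epsilon)$, absorbed by the hypothesis $\No(\n)^\epsilon\ll L^{1/2}$), Mellin inversion gives
\[
\sum_{\mathfrak a} \Lambda(\mathfrak a)\chi(\mathfrak a)w(\No(\mathfrak a)/L) = \frac{1}{2\pi i}\int_{(c)} L^s \tilde w(s)\bigl(-\tfrac{L'}{L}(s,\chi)\bigr)\,ds.
\]

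Next I extract the main term by shifting the contour to the left of $\Re(s)=1$. A residue is picked up precisely when $L(s,\chi)$ has a pole, that is, when $\chi=\abs{\cdot}^{it}$ for some real $t$. The two oscillating pieces with effective character $\abs{\cdot}^{\pm i(r_1+r_2)}$ produce poles at $s=1\mp i(r_1+r_2)$; since $r_1+r_2\approx 2T_0$ is large, the rapid decay of $\tilde w$ in the imaginary direction makes these residues $O_A(L T_0^{-A})$ for every $A>0$, hence negligible. The two ``near-diagonal'' pieces with effective character $\chi_1^{\pm 1}\chi_2^{\mp 1}\abs{\cdot}^{\mp i(r_1-r_2)}$ produce poles only for the unique $\psi$ making $\psi\chi_1^{\pm 1}\chi_2^{\mp 1}$ trivial as a Hecke character; the poles then sit at $s=1\pm i(r_1-r_2) = 1+o(1)$, and the smoothness of $\tilde w$ at $s=1$ together with $r_1-r_2=O(\log(L)^{-1-\delta})$ turns each residue into $\tilde w(1)L(1+o(1))$. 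Summing these two residues and dividing by $\sharp\text{Cl}_F^{\mathfrak q}$ gives the claimed main term $\frac{2\tilde w(1)}{\sharp\text{Cl}_F^{\mathfrak q}}L(1+o(1))$.

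The principal obstacle, and the source of the technical hypotheses, is the bound on the shifted contour for every non-polar Hecke character $\chi$ that arises. For each such $\chi$ I invoke a Vinogradov--Korobov-type zero-free region of the shape $\sigma>1-c/(\log Q)^{2/3+\epsilon}$ for $L(s,\chi)$, with analytic conductor $Q\ll \No(\mathfrak q\mathfrak l)\cdot(1+T_0+\sum_\nu\abs{t_\nu})^{[F:\Q]}$. Shifting the contour to $\Re(s)=1-c/(\log Q)^{2/3+\epsilon}$ and bounding $L'/L$ on the shifted line by the standard convexity and explicit-formula estimates produces an error of size $\ll L\exp\bigl(-c'\log L/(\log Q)^{2/3+\epsilon}\bigr)$. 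The three hypotheses $\log\No(\mathfrak q)\ll\log\log L$, $\log\No(\mathfrak l)\ll(\log L)^{1-\delta/2}$, and $\log(T_0+\sum_\nu\abs{t_\nu})^{2/3+\delta}\ll\log L$ conspire to give $(\log Q)^{2/3+\epsilon}\ll(\log L)^{1-\delta''}$ for some $\delta''>0$, so the error is $\ll L\exp(-c''(\log L)^{\delta''})=o(L)$, which is absorbed into the $o_{B,\delta}(1)$ of the main term. The delicate point is that the Vinogradov--Korobov region must hold uniformly in the infinity-type of $\chi_1\chi_2^{-1}$ and in the modulus $\mathfrak q\mathfrak l$; this is precisely the ingredient flagged in the introduction as not being available in full generality, and it is what forces the restrictive hypotheses on $\mathfrak l$ and on $T_0+\sum_\nu\abs{t_\nu}$.
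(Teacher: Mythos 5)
Your overall strategy matches the paper: orthogonality over $\widehat{Cl}_F^{\mathfrak q}$ to detect the ray-class condition, Mellin inversion, factorization of the resulting Dirichlet series into Hecke $L$-functions, and a contour shift into a Vinogradov--Korobov-type zero-free region to control the error. But there are two genuine gaps.

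First, your identification of the source of the main term is wrong. Expanding $\eta_{\chi_1,\chi_2,ir_1}(\p)\,\eta_{\chi_1^{-1},\chi_2^{-1},ir_2}(\p)$ literally (as you do) puts the twist $\chi_1^{\pm1}\chi_2^{\mp1}$ on the \emph{small-shift} pieces $s \mp i(r_1-r_2)$ and the trivial character on the \emph{large-shift} pieces $s\mp i(r_1+r_2)$. You then assert that the small-shift pieces ``produce poles only for the unique $\psi$ making $\psi\chi_1^{\pm1}\chi_2^{\mp1}$ trivial.'' No such $\psi$ exists in general: $\chi_1\chi_2^{-1}$ has infinity-type $((2t_\nu)_\nu,0)$ and conductor $\mathfrak l$, while narrow ray-class characters modulo $\mathfrak q$ have infinity-type $(\du p,0)$ with $p_\nu\in\{0,1\}$ at real places and $0$ at complex places, and conductor dividing $\mathfrak q$; neither the archimedean parts nor the conductors match under the lemma's hypotheses. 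Under your expansion, then, neither family produces a pole at $s=1+o(1)$, and your argument would prove $A_L=o(L)$, contradicting the claimed asymptotic. The correct bookkeeping --- the one actually used in the amplification, where the second divisor sum is built from $\eta_{\chi_2^{-1},\chi_1^{-1},iT_0}$, i.e.\ $\eta_{\chi_1^{-1},\chi_2^{-1},-ir_2}$ --- places the \emph{trivial} character on the small-shift pieces $s\pm i(r_1-r_2)$ and $\chi_1^{\pm1}\chi_2^{\mp1}$ on the large-shift pieces; this is what the factorization \eqref{eq:factorisation_of_D} records. The main residue then comes from the principal ray-class character $\xi=\xi_0$ directly, at $s=1\mp i(r_1-r_2)$, exactly as the paper extracts it; no matching $\psi$ is needed.

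Second, you do not address the possible Siegel/exceptional zero of the Hecke $L$-functions $L(s,\xi)$ on the near-diagonal lines. A Vinogradov--Korobov region says nothing about a real zero $\beta$ close to $1$; if such a $\beta$ lies to the right of your shifted contour it contributes a spurious residue of size $\asymp L^\beta$, which is not $o(L)$ without further input. The paper rules this out explicitly using Fogels' effective bound $1-\beta\gg_{\epsilon,F}\No(\mathfrak q)^{-\epsilon}$, which combined with $\No(\mathfrak q)\ll\log(L)^{B_2}$ forces $1-\beta\gg \log(V(T_0))^{-\epsilon B_1B_2}$, so that for $\epsilon$ small enough the exceptional zero lies strictly to the left of the contour. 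This step is essential for the unconditional asymptotic and must be included.
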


\begin{proof}
The definition of $\eta_{\chi_1,\chi_2,s}$ implies
\begin{eqnarray}
	\xi(\mathfrak{m}) \eta_{\chi_1,\chi_2,ir}(\mathfrak{m}) &=& \eta_{\xi\chi_1,\xi\chi_2,ir}(\mathfrak{m})  \text{ and} \nonumber \\
	\sum_{r=0}^{\infty} \eta_{\xi_1,\xi_2,s}(\p^r) X^r &=& (1-\xi_1(\p)\No(\p)^{\red{-}s} X)^{-1} (1-\xi_2(\p)\No(\p)^{s} X)^{-1}.  \label{eq:prereq_for_local_fac}
\end{eqnarray}
For a Dirichlet character $\xi$ we define the Dirichlet series
\begin{equation}
	D(s,\xi) = \sum_{\substack{ (\mathfrak{a},\red{\mathfrak{q}}\n)=1}} \frac{\eta_{\chi_1,\chi_2,ir_1}(\mathfrak{a})\eta_{\xi\chi_1^{-1},\xi\chi_2^{-1},\red{-}ir_2}(\mathfrak{a})}{\No(\mathfrak{a})^s}. \nonumber
\end{equation}
\red{On the other hand, $D(s,\xi)$ has a nice factorization in well\red{-}studied $L$-functions. Indeed, \eqref{eq:prereq_for_local_fac} together with \cite[Lemma~1.6.1]{Bu96} implies
\begin{equation}
	D(s,\xi) = \frac{ L(s-ir_1+ir_2,\xi)L(s+ir_1-ir_2,\xi)L(s+ir_1+ir_2,\xi\chi_1^{-1}\chi_2)L(s-ir_1-ir_2,\xi\chi_1\chi_2^{-1})}{L(2s,\xi^2)E^{-1}(s,\xi)} \label{eq:factorisation_of_D}
\end{equation}
for some well\red{-}behaved correction factor $E$.}

The logarithmic derivative of $D$ is given by
\begin{multline}
	-\frac{D'(s,\xi)}{D(s,\xi)} = \red{-\frac{ L(s-ir_1+ir_2,\xi)}{ L'(s-ir_1+ir_2,\xi)}-\frac{L(s+ir_1-ir_2,\xi)}{L'(s+ir_1-ir_2,\xi)}-\frac{L(s+ir_1+ir_2,\xi\chi_1^{-1}\chi_2)}{L'(s+ir_1+ir_2,\xi\chi_1^{-1}\chi_2)}} \\
	\red{-\frac{L(s-ir_1-ir_2,\xi\chi_1\chi_2^{-1})}{L'(s-ir_1-ir_2,\xi\chi_1\chi_2^{-1})}+\frac{L(2s,\xi^2)}{L'(2s,\xi^2)}-\frac{E(s,\xi)}{E'(s,\xi)}}= \sum_{\substack{(\mathfrak{a},\red{\mathfrak{q}}\n)=1}} \frac{b_{\xi}(\mathfrak{a})}{\No(\mathfrak{a})^s}, \nonumber
\end{multline}
for
\begin{equation}
	b_{\xi}(\mathfrak{a}) = \begin{cases} 
		\log(\No(\p))\eta_{\chi_1,\chi_2,ir_1}(\p)\eta_{\xi\chi_1^{-1},\xi\chi_2^{-1},\red{-}ir_2}(\p) &\text{ if } \mathfrak{a} = \p, \\
		b_{\p,r} &\text{ if } \mathfrak{a} = \p^r \text{ for $r\geq 2$},\\
		0 &\text{ else.}
	\end{cases} \nonumber
\end{equation}
Here $b_{\p,r}$ are some coefficients satisfying the bound
\begin{equation}
	b_{\p,r} \red{\leq  5\log(\No(\p))}, \nonumber
\end{equation}
\red{which follows from the corresponding bounds for the coeficients of logarithmic derivatives of Hecke $L$-functions.} Using character orthogonality to detect the congruence condition in $A_L(\red{r_1,r_2})$ yields
\begin{equation}
	A_L(r_1,r_2) =  \frac{1}{\sharp Cl_F^{\mathfrak{q}}}\sum_{\xi\in\widehat{Cl}^{\mathfrak{q}}_F}\sum_{\substack{ (\mathfrak{a},\red{\mathfrak{q}}\n)=1}} w\left(\frac{\No(\mathfrak{a})}{L}\right)b_{\xi}(\mathfrak{a})+\red{O(L^{\frac{1}{2}})}. \nonumber
\end{equation}
By Mellin inversion we get
\begin{equation}
	A_L(r_1,r_2) = \frac{-1}{2\pi i \sharp Cl_F^{\mathfrak{q}}}\sum_{\xi\in \red{\widehat{Cl}}_F^{\mathfrak{q}}}  \int_{(5)}L^s\tilde{w}(s)\frac{D'(s,\xi)}{D(s,\xi)}ds + \red{O(L^{\frac{1}{2}})}. \nonumber
\end{equation}
\red{Thus, in view of \eqref{eq:factorisation_of_D}, we have to consider six contour integrals.}

By using trivial estimates we observe that
\begin{equation}
	\frac{1}{2\pi i} \int_{(5)} L^s\tilde{w}(s)\frac{L'(2s,\xi^2)}{L(2s,\xi^2)}ds \ll_{\epsilon} L^{\frac{1}{2}+\epsilon}, \nonumber
\end{equation}
for any $\xi\in \widehat{Cl}_F^{\mathfrak{q}}$. Furthermore, one can compute the correction factors and estimate
\begin{equation}
	\frac{1}{2\pi i} \int_{(5)} L^s\tilde{w}(s)\frac{E'(s,\xi)}{E(s,\xi)}ds \ll_{\sigma,\epsilon} L^{\sigma}\No(\mathfrak{q}\n)^{\epsilon}, \nonumber
\end{equation}
for $0<\sigma<1$. Thus, so far we have seen that
\begin{eqnarray}
	A_L(r_1,r_2) &=& \frac{-1}{2\pi i \sharp Cl_F^{\mathfrak{q}}}\sum_{\pm}\sum_{\xi\in Cl_F^{\mathfrak{q}}}\bigg( \int_{(5)}L^s\tilde{w}(s)\frac{L'(s\pm ir_1\pm ir_2,\xi\chi_1^{\mp 1}\chi_2^{\pm 1})}{L(s\pm ir_1\pm ir_2,\xi\chi_1^{\mp 1}\chi_2^{\pm 1})}ds \nonumber \\
	&&\qquad \qquad\qquad +  \int_{(5)}L^s\tilde{w}(s)\frac{L'(s\pm ir_1\mp ir_2,\xi)}{L(s\pm ir_1\mp ir_2,\xi)}ds\bigg) \nonumber \\
	&&\qquad\qquad\qquad\qquad + O(\No(\mathfrak{q}\n)^{\epsilon}L^{\frac{1}{2}+\epsilon}). \nonumber
\end{eqnarray}

We will now estimate the contribution coming from the $L$-function attached to the Gr\"o\ss encharakter $\xi\chi_1\chi_2\red{^{-1}}$. To do so we define
\begin{equation}
	V(t) =  \abs{t}+ \sum_{\nu}\abs{t_{\nu}} \text{ and } M(\mathfrak{q},t)=\max\left(\log(\red{\mathcal{N}(\mathfrak{ql}))},\log(V(t))^{\frac{2}{3}+\frac{\delta}{2}}\right). \nonumber
\end{equation}
\red{As on \cite[p.302]{Co90} we derive from \cite[Theorem~1]{Co90} that
\begin{multline} \label{eq:coleman_up_bound}
	L(s, \xi\chi_1\chi_2^{-1}) \ll \exp\left(c_1\frac{\log\log(V(t))^{\frac{2}{3}}}{\log(V(t))^{\frac{2}{3}}}M(\mathfrak{q},t)\right), \\ \text{ for } 1-\sigma_{0,t}= 1-c_2\frac{\log\log(V(t))^{\frac{2}{3}}}{\log(V(t))^{\frac{2}{3}}}\leq \Re(s), V(\Im(s))>V_0,
\end{multline}
\red{where $V_0$ is an absolute constant depending on $F$.} Furthermore, \cite[Theorem~2]{Co90}  implies that $L(s,\xi\chi_1\chi_2^{-1}) \neq 0$ for $1-\frac{4c_3}{M(\mathfrak{q},t)}\leq \Re(s)$ and $V(\Im(s)) \red{>} V_0$. If $t_{\nu}=0$ for all $\nu$, then there might exist a real simple exceptional zero. However, this one is automatically excluded due to the assumption $V(t)>V_0$. Furthermore, the statement of \cite[Theorem~3.11]{Ti86} carries over to our setting with the same proof. Applying it with $\phi(t) = c_1\frac{\log\log(V(t))^{\frac{2}{3}}}{\log(V(t))^{\frac{2}{3}}}M(\mathfrak{q},t)$ and $\theta(t) = c_2\frac{\log\log(V(t))^{\frac{2}{3}}}{\log(V(t))^{\frac{2}{3}}}$  reveals
\begin{equation}
	\frac{L'(s, \xi\chi_1\chi_2^{-1}) }{L(s, \xi\chi_1\chi_2^{-1}) }\ll M(\mathfrak{q},\Im(s)) \text{ for }1-\frac{c_3}{M(\mathfrak{q},t)}\leq \Re(s) \text{ and } V(\Im(s)) \red{>} V_0. \label{eq:bound_for_logder_we_need1}
\end{equation}}

We have to consider two cases. 

First, we assume that $V(0) \leq V_0$, in particular we have $T_0\asymp V(T_0)$. In this case we assume without loss of generality that 
\begin{equation}
	r_1+r_2-2V_0 = 2T_0-2V_0+O(\log(L)^{-1-\delta}) \gg T_0. \nonumber
\end{equation}
Consider the contour
\begin{eqnarray}
	\mathcal{C}_1 &=& \bigg\{ 1+\epsilon \pm it\colon t\geq r_1+r_2-2V_0  \bigg\}  \nonumber \\
		&&  \cup \bigg\{\sigma\pm \red{i( r_1+ r_2 - 2V_0)}\colon 1-\frac{c_{\red{3}}}{M(\mathfrak{q},V(T_0))} \leq \sigma \leq 1+\epsilon \bigg\} \nonumber \\
		&&  \cup \bigg\{1-\frac{c_{\red{3}}}{M(\mathfrak{q},V(T_0))}+it\colon t\in [-(r_1+r_2-2V_0),r_1+r_2-2V_0] \bigg\}. \nonumber
\end{eqnarray}
\red{Note that all poles and zeros, including the possible exceptional zero, of $L(s-ir_1-ir_2,\xi\chi_1\chi_2^{-1})$ lie to the left of the contour $\mathcal{C}_1$. Thus, we can shift the line of integration and get}
\begin{equation}
	\frac{1}{2\pi i} \int_{(5)} L^s\tilde{w}(s)\frac{L'(s-ir_1-ir_2,\xi\chi_1\chi_2^{-1})}{L(s-ir_1-ir_2,\xi\chi_1\chi_2^{-1})}ds = \frac{1}{2\pi i} \int_{\mathcal{C}_1} L^s\tilde{w}(s)\frac{L'(s-ir_1-ir_2,\xi\chi_1\chi_2^{-1})}{L(s-ir_1-ir_2,\xi\chi_1\chi_2^{-1})}ds. \nonumber
\end{equation}
We continue to estimate the integral along each piece of the contour $\mathcal{C}_1$.

\red{Exploiting the rapid decay of $\tilde{w}$ we estimate trivially to get}
\begin{equation}
	\int_{\abs{t} \geq r_1+r_2-2V_0} L^{1+\epsilon+it}\tilde{w}(1+\epsilon+it)\frac{L'(s-ir_1-ir_2,\xi\chi_1\chi_2^{-1})}{L(s-ir_1-ir_2,\xi\chi_1\chi_2^{-1})} dt \ll_{\epsilon,F, A} L^{1+\epsilon} V(T_0)^{-A}. \nonumber
\end{equation}

If $\log(\No(\mathfrak{ql}))\leq \log(V(T_0))^{\frac{2}{3}+\frac{\delta}{2}}$, we see that $M(\mathfrak{q},T_0) = \log(V(T_0))^{\frac{2}{3}+\frac{\delta}{2}}$. In this case the estimation of the integral on the remaining parts of the integral is \red{straightforward} and left to the reader. Therefore, we assume 
\begin{equation}
	M(\mathfrak{q},T_0) = \log(\No(\mathfrak{ql})). \nonumber
\end{equation}
\red{In this situation \eqref{eq:bound_for_logder_we_need1} reduces to}
\begin{equation}
	\frac{L'(s\red{-ir_1-ir_2}, \xi\chi_1\chi_2^{-1})}{L(s\red{-ir_1-ir_2},\xi\chi_1\chi_2^{-1})} \ll \log(\No(\mathfrak{ql})) \text{ for } s\in \mathcal{C}_1, \abs{\Im(s)} \leq r_1+r_{\red{2}}-2V_0. \nonumber
\end{equation}
With this at hand we estimate
\begin{eqnarray}
	&& \int_{1-\frac{c_{\red{3}}}{\log(\No(\mathfrak{ql}))}}^{1+\epsilon} L^{\sigma\pm ir_1\pm ir_2\mp i2V_0} \tilde{w}(\sigma\pm ir_1\pm ir_2 \mp i2V_0) \nonumber \\
	 &&\qquad\qquad\qquad\cdot\frac{L'(\sigma-ir_1-ir_2\pm ir_1\pm ir_2\mp i2V_0, \xi\chi_1\chi_2^{-1})}{L(\sigma-ir_1-ir_2\pm ir_1\pm ir_2\mp i2V_0,\xi\chi_1\chi_2^{-1})} d\sigma \ll_A L^{1+\epsilon} V(T_0)^{-A}. \nonumber
\end{eqnarray}
The last piece can be bounded as follows:
\begin{align}
	&\int_{\abs{t} \red{\leq} r_1+r_2-2V_0} L^{1\red{-\frac{c_3}{\log(\No(\mathfrak{ql}))}}+it}\tilde{w}(1\red{-\frac{c_3}{\log(\No(\mathfrak{ql}))}}+it)\frac{L'(\red{1-\frac{c_3}{\log(\No(\mathfrak{ql}))}+it}-ir_1-ir_2,\xi\chi_1\chi_2^{-1})}{L(\red{1-\frac{c_3}{\log(\No(\mathfrak{ql}))}+it}-ir_1-ir_2,\xi\chi_1\chi_2^{-1})} dt \nonumber \\
	& \ll L \exp\left(-c'\frac{\log(L)}{\log(\No(\mathfrak{lq}))} \right) \log(\No(\mathfrak{ql}))  \ll L\log(L) \exp(-c''\log(L)^{\delta}) \nonumber\\
	& \ll L\log(L)^{-A}. \nonumber 
\end{align}

Second, we consider the case $V(0) \red{>} V_0$. In this case there must be at least one $t_{\nu}\neq 0$ so that there can not be a pole at 1 or an exceptional zero for the character $\xi\chi_1\chi_2^{-1}$. \red{Furthermore, the bounds \eqref{eq:coleman_up_bound} and \eqref{eq:bound_for_logder_we_need1} as well as the corresponding zero-free region} hold for all $t$. With this in mind we define the contour
\begin{eqnarray}
	\mathcal{C}_2 &=& \bigg\{ 1+\epsilon\pm it\colon t\geq 100V(T_0)  \bigg\}  \nonumber \\
		&&  \cup \bigg\{\sigma\pm i100V(T_0)\colon 1-\frac{c_{\red{3}}}{M(\mathfrak{q},T_0)} \leq \sigma \leq 1+\epsilon \bigg\} \nonumber \\
		&&  \cup \bigg\{1-\frac{c_{\red{3}}}{M(\mathfrak{q},T_0)}+it\colon t\in [-100V(T_0),100V(T_0)] \bigg\}. \nonumber
\end{eqnarray}
Since there are no poles or zeros of $L(s\red{-ir_1-ir_2},\xi\chi_1\chi_2^{-1})$ to the right of $\mathcal{C}_2$ we obtain
\begin{equation}
	\frac{-1}{2\pi i} \int_{(5)} L^s\tilde{w}(s)\frac{L'(s-ir_1-ir_2,\xi\chi_1\chi_2^{-1})}{L(s-ir_1-ir_2,\xi\chi_1\chi_2^{-1})}ds = \frac{-1}{2\pi i} \int_{\mathcal{C}_2} L^s\tilde{w}(s)\frac{L'(s-ir_1-ir_2,\xi\chi_1\chi_2^{-1})}{L(s-ir_1-ir_2,\xi\chi_1\chi_2^{-1})}ds. \nonumber
\end{equation}
The remaining contour integral is estimated similarly to the previous case. \red{Further, the contribution of $L(s+ir_1+ir_2,\xi\chi_1^{-1}\chi_2)$ can be estimated analogously.}

By choosing $A$ sufficiently large, and exploiting the assumption $\log(L)\ll\log(V(T_0))$ we conclude that
\begin{eqnarray}
	A_L(r_1,r_2) &=& \frac{-1}{2\pi i \sharp Cl_F^{\mathfrak{q}}}\sum_{\pm}\sum_{\xi\in Cl_F^{\mathfrak{q}}}  \int_{(5)}L^s\tilde{w}(s)\frac{L'(s\pm ir_1\mp ir_2,\xi)}{L(s\pm ir_1\mp ir_2,\xi)}ds +O(L\log(L)^{-A'}). \nonumber
\end{eqnarray}

To deal with the final contribution we recall \cite[Satz~2.1]{Hi80}. It says that for $s=\sigma+it$ satisfying 
\begin{equation}
	\sigma\geq 1-c_1\left(\frac{\log\log\abs{t}}{\log\abs{t}}\right)^{\frac{2}{3}} \text{ and }\abs{t}\geq 3 \nonumber
\end{equation} 
we have
\begin{equation}
	L(s,\xi) \ll \exp\left(c_1\left(\frac{\log\log\abs{t}}{\log\abs{t}}\right)^{\frac{2}{3}}\log\No(\mathfrak{q})+c_2\log\log\abs{t}\right).\label{eq:bound_for_L_incrit}
\end{equation}
Furthermore, by \cite[Satz~1.1]{Hi80}, there is at most one real simple zero $\beta$ in the region
\begin{equation}
	\sigma\geq 1-\frac{4c_3}{M\red{'}(\mathfrak{q},t)}, \text{ for } M'(\mathfrak{q},t) = \max\bigg(\log(\No(\mathfrak{q})\red{)},(\log(\abs{t}+3))^{\frac{2}{3}}(\log\log(\abs{t}+3))^{\frac{1}{3}}\bigg). \nonumber
\end{equation}
Even more, there is at most one character modulo $\mathfrak{q}$ featuring an exceptional zero which we will denote by $\xi_e$. Let us remark that since we are dealing with an arbitrary number field the case $\xi_0=\xi_e$ is not excluded. Here and in the following $\xi_0$ denotes the principal character.

Guided by this zero-free region we define the contour
\begin{eqnarray}
	\mathcal{C}_{\red{3}} &=& \bigg\{ 1\pm it\colon t\geq100V(T_0)  \bigg\} \cup \bigg\{\sigma\red{\pm}100iV(T_0)\colon 1-\frac{c'}{\log(V(T_0))^{\frac{2}{3}+\frac{\delta}{2}}} \leq \sigma \leq 1\bigg\} \nonumber \\
	&& \cup \bigg\{1-\frac{c'}{\log(V(T_0))^{\frac{2}{3}+\frac{\delta}{2}}}+it\colon t\in [-100V(T_0),100V(T_0)] \bigg\}. \nonumber
\end{eqnarray}
Our assumption on the size of $\No(\mathfrak{q})$ implies 
\begin{equation}
	M'(\mathfrak{q},t) \ll_{\delta} \log(V(T_0))^{\frac{2}{3}+\frac{\delta}{2}} \text{ for } t\in [-100V(T_0),100V(T_0)]. \nonumber
\end{equation}
We conclude that for suitabl\red{y} taken $c'$ our contour $\mathcal{C}_{\red{3}}$ is contained in the extended zero-free region described above. Let us also show that the exceptional zero (if it exists) must be on the left of $\mathcal{C}_{\red{3}}$. By \cite{Fo63} the exceptional zero satisfies
\begin{equation}
	\red{1-}\beta \gg_{\epsilon,F} \No(\mathfrak{q})^{-\epsilon}. \nonumber
\end{equation}
Further, our assumptions on the size of $L$ and $\No(\mathfrak{q})$ imply
\begin{equation}
	\red{1-}\beta \geq c(\epsilon,B_1,B_2) \log(V(T_0))^{-\epsilon B_2 B_1}. \nonumber
\end{equation}
By choosing $\epsilon$ small enough this \red{yields}
\begin{equation}
	\red{1-}\beta> c(B_2,B_1) \log(V(T_0))^{-\frac{1}{2}}> \frac{8c'}{\log(V(T_0))^{\frac{2}{3}+\frac{\delta}{2}}}, \nonumber
\end{equation}
after making $c'$ smaller if necessary. In particular, we have seen that $\red{\beta}<1-\frac{c'}{\log(V(T_0))^{\frac{2}{3}+\frac{\delta}{2}}}$.

The \red{residue} theorem implies
\begin{eqnarray}
	\frac{1}{2\pi i} \int_{(5)} L^s\tilde{w}(s)\frac{L'(s+ir_1-ir_2,\xi)}{L(s+ir_1-ir_2,\xi)}ds &=& -\delta_{\xi=\xi_0}\tilde{w}(1-ir_1+ir_2)L^{1-ir_1+ir_2} \nonumber \\
	&& +\frac{1}{2\pi i}\int_{\mathcal{C}_{\red{3}}} L^{s}\tilde{w}(s)\frac{L'(s+ir_1-ir_2,\xi)}{L(s+ir_1-ir_2,\xi)}ds. \nonumber 
\end{eqnarray}

Writing $-ir_1+ir_2 = i\eta = O(\log(L)^{-1-\delta})$ and \red{continuity of the exponential function} shows that the pole contributes
\begin{equation}
	\tilde{w}(1-ir_1+ir_2)L^{1-ir_1+ir_2} = \tilde{w}(1)L(1+o(1)). \nonumber
\end{equation}
This gives us the expected main term.

Finally, we estimate the integral along the contour $\mathcal{C}_{\red{3}}$. To do so we need bounds for $\frac{L'(s,\xi)}{L(s,\xi)}$ on $\mathcal{C}_{\red{3}}$. \red{Again we use \cite[Theorem~3.11]{Ti86}, which easily generalizes to our situation, together with \eqref{eq:bound_for_L_incrit} to obtain
\begin{equation}
	\frac{L'(s,\xi)}{L(s,\xi)} \ll M'(\mathfrak{q},t) \text{ for } \sigma\geq 1-\frac{c_3}{M\red{'}(\mathfrak{q},t)}\text{ and }\abs{t}\geq 3. \nonumber
\end{equation}
From this we deduce the weaker bounds
\begin{equation}
	\frac{L'(s+ir_1-ir_2,\xi)}{L(s+ir_1-ir_2,\xi)} \ll \begin{cases} 
		\log(V(T_0)) &\text{ if } \abs{t} \leq 100V(T_0), \\
		\log(\abs{t})&\text{ else,}	
	\end{cases} \nonumber
\end{equation}	
for $s\in \mathcal{C}$. Note that these bounds are very rough, but due to the region on which they are valid still highly non-trivial.} Using these bounds together with the rapid decay of $\tilde{w}$ yields
\begin{equation}
	\frac{1}{2\pi i}\int_{\mathcal{C}_{\red{3}}} L^{s}\tilde{w}(s)\frac{L'(s+ir_1-ir_2,\xi)}{L(s+ir_1-ir_2,\xi)}ds = O(L\log(V(T_0))^{-A}). \nonumber
\end{equation}
\noindent\red{The same argument deals with the contribution of $L(s-ir_1+ir_2,\xi)$.}

By patching all the pieces together using the technical assumptions on $\No(\mathfrak{q})$ and $\log(L)$ we obtain the required asymptotic formula for $A_L(r_1,r_2)$.
\end{proof}

\section{The amplification of Eisenstein series}

In this section we construct an amplifier for the Eisenstein series. The argument is similar to the one in \cite[Section~4.1]{As17_1} and we will leave out some details. Throughout this section we fix a parameter $T_0\redd{\geq }1$. Lemma~\ref{lm:Eisenstein_reduction} implies that, after possibly replacing $E$ by $E^{\mathfrak{L}}$ for some $\mathfrak{L}\mid \n$, we can restrict ourselves to $g= a(\theta_i)g'n(x)a(y)$ with $g' = kh_{\n}\in \mathcal{J}_{\n}$ and $n(x)a(y) \in \mathcal{F}_{\n_2}$. Define $E'(s,g) = E(s,gh_{\n})$. 

We choose an ideal $\mathfrak{q}$ such that every quadratic residue modulo $\mathfrak{q}$ is indeed a square in $\mathcal{O}_F^{\times}$.  By \cite[Lemma~5.1]{As17_2} there exists such an ideal which further satisfies $(\mathfrak{q},\n)=1$ as well as $\No(\mathfrak{q})\ll \log(\No(\n))^{B_2}$ for some large $B_2\geq 0$.

Even if the Eisenstein series $E'$ itself is not an element of $L^2(X)$, we will use the spectral expansion to obtain average bounds. To do so we proceed by choosing a test function $f$, very similar to the ones used in \cite{As17_2, BHMM16, Sa15} for cusp forms, \red{and associating the integral operator
\begin{equation}
	R(f)w = \int_{Z(\A_F)\setminus G(\A_F)} f(g)[\pi(s)(g)w]dg. \nonumber
\end{equation}
The desired bounds will follow by analyzing both the geometric and the spectral expansion of this operator. Exploiting the product structure of $\A_F$ we construct $f$ place by place as follows.}

At the archimedean places $\nu$ we choose 
\begin{equation}
	f_{\nu}(g_{\nu}) = k_{\nu}(u_{\nu}(g_{\nu}.i_{\nu},i_{\nu})), \nonumber
\end{equation}
for $k_{\nu}$ as in \cite[Lemma~\redd{10}]{BHMM16} \red{with spectral parameter $t_{\nu}+T_0$ of $\pi_{\nu}(iT_0)$}. Note that by uniqueness of the spherical vector  \red{in $\mathcal{B}(\chi_{1,\nu}(s),\chi_{2,\nu}(-s))$} we have
\begin{equation}
	R(f_{\nu})v_{\nu}^{\circ}(it) = c_{\nu}(\pi_{\nu}(it))\red{v_{\nu}^{\circ}(it)}. \nonumber
\end{equation}
\red{The eigenvalue $c_{\nu}(\pi_{\nu}(\cdot))$ can be obtained from $k_{\nu}$ via the inverse Selberg/Harish-Chandra transform. Thus, by \cite[(9.8)]{BHMM16}} we   get $c_{\nu}(\pi_{\nu}(\red{i}T_0))\gg 1$. By continuity \red{of $t\redd{\mapsto} c_{\nu}(\pi(it))$} there is $\eta>0$ such that
\begin{equation}
	c_{\nu}(\pi_{\nu}(\red{i}t))\gg 1 \text{ for all } t\in [T_0-\eta,T_0+\eta]. \nonumber
\end{equation}
\redd{Even more, according to  \cite[(9.5)]{BHMM16}, the constant $\eta$ can be chosen independently of $T_0$.}

If $\p\mid\mathfrak{q}$, we take
\begin{equation}
	f_{\p}(g_{\p}) = \begin{cases} 
		\text{vol}(Z(\op_{\p})\setminus \tilde{K}_{0,\p}(1))^{-1}\varpi_{\pi}^{-1}(z) &\text{ if } g_{\p}=zk\in Z(F_{\p})\tilde{K}_{0,\p}(1),\\
		0 &\text {else.}
	\end{cases} \nonumber
\end{equation} 
\red{Note that as in \cite[p. 24]{As17_2} we have} $\abs{f_{\p}}\ll q_{\p}^{2+\epsilon}$ and that
\begin{equation}
	R(f_{\p})v_{\p}^{\circ}(it) = v_{\p}^{\circ}(it). \nonumber
\end{equation}
\red{Since the support of $f_{\p}$ is contained in $Z(F_{\p})K_{\p}$ this is true for all $t$.}

For $\p\vert \n$ we choose
\begin{equation}
	f_{\p}(g_{\p}) = \abs{\det(g_{\p})}^{ia_{\p}} \Phi_{\pi_{\p}'(0)}'(g_{\p}) \nonumber
\end{equation}
as defined on \cite[Section~2.6]{Sa15}. \red{It follows from \cite[Proposition~2.13]{Sa15} that}
\begin{equation}
	R(f_{\p})v_{\p}^{\circ}(it) = \delta_{\pi_{\p}'(0)}v_{\p}^{\circ}(it), \text{ for all } t,
\end{equation}
\red{where $\delta_{\pi_{\p}'(0)}$ constant independent of $t$ satisfying
\begin{equation}
	 \delta_{\pi_{\p}'(0)} \gg q_{\p}^{-n_{1,\p}-m_{1,\p}}. \nonumber
\end{equation} }
\red{By construction $f_{\p}$ satisfies the two important properties}
\begin{eqnarray}
	\abs{f_{\p}(g_{\p})} &\leq& 1 \text{ for all } g_{\p}, \nonumber\\
	\text{supp}(f_{\p}) &\subset& \begin{cases}
		Z(F_{\p})K_{\p} &\text{ if $n_{\p}$ is even, } \nonumber\\
		Z(F_{\p})K_{\p}^{\circ}(1) &\text{ else.}
	\end{cases} 
\end{eqnarray} 

The remaining places are treated at once. At this stage we diverge from the treatment in \cite{As17_2}. Indeed, we will exploit the explicit form of the Hecke eigenvalues of Eisenstein series to construct a shorter amplifier. \red{For $(\mathfrak{a},\n)=1$ let 
\begin{equation}
	\kappa_{\mathfrak{a}}\colon \prod_{\p\nmid \n}G(F_{\p}) \to \C \nonumber
\end{equation}
be the integral kernel for the Hecke operator $T(\mathfrak{a})$. Thus, according to Remark~\ref{rem:rel_forEisenstein}, we have
}
\begin{equation}
	R(\kappa_{\mathfrak{a}})v_{\text{ur}}^{\circ}(it) = \sqrt{\No(\mathfrak{a})}\eta_{\chi_1,\chi_2,it}(\mathfrak{a})v_{\text{ur}}^{\circ}(it), \label{eq:rel_to+hecke_important}
\end{equation}
\red{at least for every principal ideal $\mathfrak{a}$ \redd{coprime} to $\n$. Further we fix a large parameter $L$, a test function $w$ as in Section~\ref{sec:averages_div_f}, and recall the definition of the set $\mathcal{P}(L)\subset \mathcal{O}_F$ from \cite[p. 24]{As17_2}. With this at hand we construct the test function}
\begin{equation} 
	f_{\text{ur}} = \left( \sum_{\alpha\in \mathcal{P}(L)} \frac{\kappa_{(\alpha)}}{\sqrt{\No(\alpha)}}x_{\alpha} \right) \cdot \left( \sum_{\alpha\in \mathcal{P}(L)} \frac{\kappa_{(\alpha)}}{\sqrt{\No(\alpha)}}x_{\alpha} \right)^{*}, \nonumber
\end{equation}
\redd{where 
\begin{equation}
	x_{\alpha} = w\left(\frac{\No(\alpha)}{L}\right)\log(\No(\alpha)) \eta_{\chi_2^{-1},\chi_1^{-1},iT_0}((\alpha)),\quad \text{ for } \alpha\in \mathcal{P}(L).\nonumber
\end{equation}
Note that the integral operator with  kernel $f_{\text{ur}}$ is positive by construction.}

\red{In view of \eqref{eq:rel_to+hecke_important} and $\eta_{\chi_2^{-1},\chi_1^{-1},iT_0}((\alpha)) = \eta_{\chi_1^{-1},\chi_2^{-1},-iT_0}((\alpha))$ we have}
\begin{equation}
	R(f_{ur}) v_{\text{ur}}^{\circ}(ir) = \abs{A_L(r,T_0)}^2v_{\text{ur}}^{\circ}(ir), \nonumber
\end{equation}
where $A_{L}$ was defined in \eqref{eq:def_of_AL}. Opening the square yields
\begin{equation}
	f_{\text{ur}} = \sum_{\alpha\in \mathcal{O}_F} y_{\alpha} \frac{\kappa_{\alpha}}{\sqrt{\No(\alpha)}}, \nonumber
\end{equation}
for
\begin{equation}
	y_{\alpha} = \begin{cases}
		\sum_{\alpha' \in \mathcal{P}(L)} \abs{x_{\alpha'}}^2 \omega^{-1}_{\pi_{(\alpha')}}(\varpi_{(\alpha')})+ \abs{x_{\alpha'^2}}^2 \omega^{-1}_{\pi_{(\alpha')}}(\varpi_{(\alpha')}^2) &\text{ if }\alpha=1, \\ 
		x_{\alpha_1}\overline{x_{\alpha_2}} &\text{ if } \alpha=\alpha_1\alpha_2 \text{ for } \alpha_1,\alpha_2\in \mathcal{P}(L), \\ 
		 0 &\text{ else}.
	\end{cases} \nonumber
\end{equation}
It is clear that
\begin{equation}
	y_{\alpha} \ll L^{\epsilon}\cdot \begin{cases}
		L &\text{ if $\alpha= 1$,}\\
		1 &\text{if $\alpha = \alpha_1\alpha_2$ for $\alpha_1,\alpha_2 \in \mathcal{P}(L)$,}\\
		0&\text{ else.}  
	\end{cases} \nonumber
\end{equation}
\red{Comparing this to \cite[(4.4)]{As17_2} or \cite[(9.16)]{BHMM16} is showing once more the advantage of this shorter amplifier.}

Globally we define the test function
\begin{equation}
	f=f_{\text{ur}}\prod_{\nu} f_{\nu}\prod_{\p\mid\mathfrak{q}\n} f_{\p}. \nonumber
\end{equation}
By exploiting the positivity of $f$ \redd{and recalling the definitions in \eqref{eq:def_of_several_ideals}} one obtains
\begin{equation}
	\frac{1}{\No(\n_1\mathfrak{m}_1)} \int_{T_0-\eta}^{T_0+\eta}\abs{A_L(t,T_0)}^2\abs{E'(it,g)}^2 dt \ll \sum_{\gamma\in Z(F)\setminus G(F)} f(g^{-1}\gamma g) \red{=\colon  K_f(g,g).} \label{eq:spec_side_for_eis}
\end{equation}
\red{This is parallel to \cite[(4.4)]{As17_2} and \cite[Section~9.4]{BHMM16}.} We continue by estimating the geometric side of this pre-trace inequality.

 We start by expanding
\begin{eqnarray}
	K_f(g,g) &\ll& \sum_{0\neq \alpha\in \mathcal{O}_F} \frac{\abs{y_{\alpha}}}{\sqrt{\No(\alpha)}} \sum_{\gamma\in Z(F)\setminus G(F)} \abs{\kappa_{(\alpha)}\prod_{\p\mid \n\mathfrak{q}}f_{\p}}(g'^{-1}a(\theta_i^{-1})\gamma a(\theta_i)g')\abs{k(u(\gamma .P,P))}. \nonumber
\end{eqnarray}
Since we are using the same test function as in \cite{As17_2} we can exploit the \redd{support properties} analogously. \red{By following the first steps in the proof of \cite[Proposition~4.1]{As17_2}} we arrive at
\begin{equation}
	K_f(g,g) \ll \No(\mathfrak{q})^{2+\epsilon} L^{\epsilon} \sum_{0\neq \alpha\in \mathcal{O}_F} \frac{\redd{\abs{y_{\alpha}}}}{\sqrt{\No(\alpha)}} \sum_{\gamma\in \Gamma(i,\alpha)} \abs{k(u(\gamma .P,P))}, \nonumber
\end{equation}
\red{where $\Gamma(i,\alpha)$ was defined in \cite[(9.21)]{BHMM16} (or on \cite[p.28]{As17_2}).} We follow the argument from \cite[p. 26]{BHMM16}. In particular, to each $\gamma\in \Gamma(i,m)$ and each $\nu$ we associate the smallest $k_{\nu}(\gamma)\in \Z$ such that 
\begin{equation}
	\max(T_{\nu}^{-2},u_{\nu}(\gamma_{\nu}.P_{\nu},P_{\nu}) )\leq 2^{k_{\nu}(\gamma)}. \nonumber
\end{equation}
Recall that $T_{\nu} = \max(\frac{1}{2}, \abs{t_{\nu}+T_0})$. Further, define $\delta_{\nu}(\gamma)=2^{k_{\nu}(\gamma)}$. As in \cite[\redd{(9.22)}]{BHMM16} we observe that
\begin{equation}
	k(u(\gamma.P,P)) \ll \abs{T}_{\infty}^{\frac{1}{2}}\abs{\delta(\gamma)}_{\infty}^{-\frac{1}{4}}. \nonumber
\end{equation} 
In order to group the $\gamma$'s together appropriately we define the sets
\begin{equation}
	\mathfrak{M}(L,j,\delta)= \bigg\{ \gamma\in \Gamma(i,\alpha_1^j\alpha_2^j)\colon u_{\nu}(\gamma_{\nu}.P_{\nu},P_{\nu})\leq \delta_{\nu}\text{ for all }\nu \text{ and } \alpha_1,\alpha_2\in \mathcal{P}(L)  \bigg\} \nonumber
\end{equation}
 and put 
\begin{equation}
	M(L,j,\delta) = \sharp \mathfrak{M}(L,j,\delta). \nonumber
\end{equation}
Using the support of $k_{\nu}$ and the shape of $\abs{y_{\alpha}}$ we conclude
\begin{equation}
	K_f(g,g) \ll \No(\mathfrak{q})^{2+\epsilon}L^{\epsilon}\sum_{\substack{\du{k}\in \Z^{\sharp\{\nu\}},\\ T_{\nu}^{-2} \leq \delta_{\nu} = 2^{k_{\nu}}\leq 4}} \frac{\abs{T}_{\infty}^{\frac{1}{2}}}{\abs{\delta}_{\infty}^{\frac{1}{4}}} \left(\underbrace{LM(L,0,\delta)}_{\text{ contribution from $\alpha=1$.}}+L^{-1}M(L,1,\delta)  \right).\nonumber
\end{equation}
\red{In contrast to \cite[(4.9)]{As17_2} and \cite[\redd{(9.24)}]{BHMM16} we do not have to deal with the contribution of $M(L,2,\delta)$. This is due to the modified amplifier.}

Inserting the  counting results from \cite[\redd{Section~11}]{BHMM16} summarized in the first list and using
\begin{equation}
	\abs{T}_{\R}^{-2} \leq \abs{\delta}_{\R} \redd{\ll_F}1 \text{ and }\abs{T}_{\C}^{-2} \leq \abs{\delta}_{\C} \redd{\ll_F} 1 \nonumber
\end{equation}
yields
\begin{equation}
	K_f(g,g) \ll \No(\mathfrak{q})^{2+\epsilon}(\abs{T}_{\infty} L)^{\epsilon} \left( \abs{T}_{\infty}L+\abs{T}_{\infty}^{\frac{1}{2}}\abs{y}_{\infty} L^2+ \frac{\abs{T}_{\infty}^{\frac{1}{2}}\abs{T}_{\C}^{\frac{1}{2}}}{\No(\n_2)^{\frac{1}{4}}}L^{\frac{3}{2}} +\frac{\abs{T}_{\infty}^{\frac{1}{2}}}{\No(\n_2)}L^3 \right). \nonumber
\end{equation}

We choose
\begin{equation}
	L=(\No(\n)t_0)^{\epsilon}\abs{T}_{\infty}^{\frac{1}{4}} \No(\n_2)^{\frac{1}{2}} \nonumber
\end{equation}
for $t_0 = \sum_{\nu} \abs{t_{\nu}}+\abs{T_0}$. Note that $1\redd{\leq} t_0$. \redd{Furthermore, since we are assuming $\chi_1\chi_2^{-1}\vert_{F_{\infty}^{+}}=1$, we have $\sum_{\nu}t_{\nu}=0$ and therefore $t_0\ll \abs{T}_{\infty}$.} This leads to
\begin{equation}
	K_f(g,g) \ll \No(\mathfrak{q})^{2+\epsilon}(\abs{T}_{\infty}\No(\n))^{\epsilon} \left( \abs{T}_{\infty}^{\redd{\frac{5}{4}}} \No(\n_2)^{\frac{1}{2}}+\abs{T}_{\infty}^{\frac{7}{8}}\abs{T}_{\C}^{\frac{1}{2}}\No(\n_2)^{\frac{1}{2}}+\abs{T}_{\infty}\No(\n_2)\abs{y}_{\infty} \right). \label{eq:geo_bound_for_eis}
\end{equation}

In order to use Lemma~\ref{lm:low_amp_eis} we must ensure that the appropriate growth conditions are satisfied. Let us make the following assumptions
\begin{eqnarray}
	t &=& T_0 + O(\log(\abs{T}_{\infty})^{-1-\delta}), \label{eq:assumptions_b}\\
	\No(\mathfrak{q}) &\ll&  \log(\No(\n))^{C_1}, \\
	\No(\n) &\ll& \abs{T}_{\infty}^{C_2}, \\
	\log(\No(\mathfrak{l})) &\ll& \log(\abs{T}_{\infty})^{1-\frac{\delta}{2}}. \label{eq:assumptions_e}
\end{eqnarray}

\begin{lemma} \label{lm:verifying_some_conditions}
Assuming \eqref{eq:assumptions_b}-\eqref{eq:assumptions_e} and $T_0\gg 1$ one obtains
\begin{equation}
	\abs{A_L(t,T_0)}^2 \gg_{C_2,\epsilon} \red{\No(\n)^{-\epsilon}}L^2. \nonumber
\end{equation}
\end{lemma}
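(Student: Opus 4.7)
The plan is to apply Lemma~\ref{lm:low_amp_eis} directly, with its parameters specialised to $r_1 = t$ and $r_2 = T_0$. That lemma then provides the asymptotic
\[
A_L(t, T_0) = \frac{2\tilde{w}(1)}{\sharp Cl_F^{\mathfrak{q}}}\, L\,(1 + o(1)),
\]
and since $\sharp Cl_F^{\mathfrak{q}} \ll_F \No(\mathfrak{q}) \ll \log(\No(\n))^{C_1}$ is negligible compared with the $(\No(\n) t_0)^{\epsilon}$ cushion already built into our choice of $L$, squaring immediately yields $\abs{A_L(t,T_0)}^2 \gg_{C_2,\epsilon} L^2$. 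The proof therefore reduces entirely to verifying the four growth hypotheses of Lemma~\ref{lm:low_amp_eis}.

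The central observation I would use is the two-sided comparison $\log(L) \asymp_{F,C_2} \log(t_0)$, where $t_0 = \sum_{\nu} \abs{t_\nu} + T_0$. The lower bound $\log(L) \gg \epsilon \log(t_0)$ is immediate from the explicit factor $(\No(\n) t_0)^\epsilon$ in the definition of $L$, combined with the trivial bounds $\abs{T}_{\infty} \geq 2^{-[F:\Q]}$ and $\No(\n_2) \geq 1$. For the reverse direction I would use the pointwise estimate $T_\nu \leq t_0 + O(1)$, which gives $\abs{T}_\infty \ll_F t_0^{[F:\Q]}$, together with $\No(\n) \ll \abs{T}_\infty^{C_2}$ and $\No(\n_2) \leq \No(\n)$; substitution into the definition of $L$ then yields $\log(L) \ll_{C_2,F} \log(t_0)$.

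With this comparison in place, the four hypotheses of Lemma~\ref{lm:low_amp_eis} become routine. The coprimality $(\mathfrak{q},\n)=1$ is guaranteed by the construction of $\mathfrak{q}$, and $\No(\mathfrak{q}) \ll \log(\No(\n))^{C_1} \ll \log(L)^{C_1}$ gives the required bound on $\No(\mathfrak{q})$. The centred estimate on $r_1, r_2$ is exactly \eqref{eq:assumptions_b} after converting $\log(\abs{T}_\infty)$ to $\log(L)$. The two-sided condition $\log(t_0)^{2/3+\delta} \ll \log(L) \ll \log(t_0)$ follows from the comparison above for $t_0$ sufficiently large, after shrinking $\delta$ to be strictly below $1/3$ (which may be assumed without loss). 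Finally, $\log(\No(\mathfrak{l})) \ll \log(L)^{1-\delta/2}$ is a restatement of \eqref{eq:assumptions_e}, while $\No(\n)^{\epsilon} \ll \abs{T}_\infty^{C_2\epsilon} \ll L^{1/2}$ holds provided $\epsilon$ is taken smaller than $1/(8 C_2)$. I do not foresee any serious obstacle; the only delicate point is bookkeeping, namely ensuring that the various appearances of $\epsilon$ (in the definition of $L$, in the hypothesis $\No(\n)^\epsilon \ll L^{1/2}$, and in the absorption of the class-number factor) are consistent and that $\epsilon$ is fixed small enough in terms of $C_2$ to make all of them work simultaneously.
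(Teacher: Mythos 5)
Your proof takes the same route as the paper's: you reduce the lemma to verifying the technical hypotheses of Lemma~\ref{lm:low_amp_eis}, and the key step in both is the two-sided comparison $\log L \asymp \log t_0$ (the paper proves $\log L \ll_{C_2} \log\abs{T}_\infty \ll \log t_0$ on one side and $\log(t_0)^{2/3+\delta} \ll_\epsilon \log L$ on the other, exactly your argument). Your explicit observation that one must take $\delta < 1/3$ for the inequality $\log(t_0)^{2/3+\delta} \ll \log t_0$ to survive $t_0 \to \infty$ is a worthwhile piece of bookkeeping that the paper leaves implicit, but it does not change the argument's substance.
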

\begin{proof}
The statement follows directly from Lemma~\ref{lm:low_amp_eis}. Thus, we need to verify that the technical assumptions of this lemma are satisfied. 

First, we note that obviously $\No(\n)^{\epsilon} \ll L^{\frac{1}{2}}$ and $\No(\mathfrak{q})\ll \log(L)^{B_2}$ for some big enough $B_2$. These estimates are both due to the factor $\No(\n)^{\epsilon}$ in $L$.

Second, observe that $t_0 = V(T_0)$. One can check that
\begin{equation}
	\log(\abs{T}_{\infty}) = \sum_{\nu} [F_{\nu}:\R]\log(\abs{t_{\nu}+T_0}) \ll \log(t_0), \nonumber
\end{equation}
for $T_0\redd{\geq} 1$. We compute
\begin{eqnarray}
	\log(L) = \epsilon\log(\No(\n)t_0)+\frac{1}{2}\log(\No(\n_2)) + \frac{1}{4}\log(\abs{T}_{\infty}) \ll_{C_2} \log(\abs{T}_{\infty}) \ll \log(t_0). \nonumber
\end{eqnarray}
On the other hand\redd{, because $t_0\geq 1$,} we have
\begin{equation}
	\log(t_0)^{\frac{2}{3}+\delta} \ll_{\epsilon} \log(t_0^{\epsilon}) \leq \log(L). \nonumber
\end{equation}

Third, we see that 
\begin{equation}
	t-T_0 \ll \log(\abs{T}_{\infty})^{-1-\delta}\ll \log(L^{\frac{1}{C_2}})^{-1-\delta}\ll_{C_2} \log(L)^{-1-\delta}. \nonumber
\end{equation}

Finally, we have to confirm that $\log(\No(\mathfrak{l}))\ll\log(L)^{1-\frac{\delta}{2}}$. This follows from
\begin{equation}
	\log(L)^{1-\frac{\delta}{2}} \redd{\gg} \log(\abs{T}_{\infty})^{1-\frac{\delta}{2}} \gg \log(\No(\mathfrak{l})). \nonumber
\end{equation}
\end{proof}

Combining \eqref{eq:spec_side_for_eis}, \eqref{eq:geo_bound_for_eis} and Lemma~\ref{lm:verifying_some_conditions} yields the following result.

\begin{prop} \label{pr:after_amp_for_eis}
If $T_0$, $\n$ and $\mathfrak{l}$ satisfy \eqref{eq:assumptions_b}-\eqref{eq:assumptions_e}, then we have
\begin{eqnarray}
	&&\int_{T_0-c\log(\abs{T}_{\infty})^{-1-\delta}}^{T_0+c\log(\abs{T}_{\infty})^{-1-\delta}} \abs{E'(it,g)}^2dt \nonumber \\
	&&\qquad\qquad \ll (\No(\n)\abs{T}_{\infty})^{\epsilon} \No(\n_0\mathfrak{m}_1)\bigg( \abs{T}_{\infty}^{\frac{3}{4}}\No(\n_2)^{\frac{1}{2}}+\abs{T}_{\R}^{\frac{3}{8}}\abs{T}_{\C}^{\frac{7}{8}}\No(\n_2)^{\frac{1}{2}}+\abs{T}_{\infty}^{\frac{1}{2}}\No(\n_2)\abs{y}_{\infty} \bigg), \nonumber
\end{eqnarray}
for $g= a(\theta_i)g'n(x)a(y)$ with $g' = kh_{\n}\in \mathcal{J}_{\n}$ and $n(x)a(y) \in \mathcal{F}_{\n_2}$.
\end{prop}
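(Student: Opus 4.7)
The plan is to combine three ingredients already established in this section: the pre-trace inequality \eqref{eq:spec_side_for_eis}, the geometric-side bound \eqref{eq:geo_bound_for_eis}, and the amplifier lower bound of Lemma~\ref{lm:verifying_some_conditions}. Given these pieces, the proof reduces to a short computation.

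First, I would verify that under the assumptions of the proposition the hypotheses \eqref{eq:assumptions_b}--\eqref{eq:assumptions_e} of Lemma~\ref{lm:verifying_some_conditions} are satisfied whenever $t$ lies in the integration range $[T_0 - c\log(\abs{T}_{\infty})^{-1-\delta}, T_0 + c\log(\abs{T}_{\infty})^{-1-\delta}]$. The control on $\No(\mathfrak{q})$ follows from its construction $\No(\mathfrak{q}) \ll \log(\No(\n))^{B_2}$; the bound $\log(\No(\n)) \ll \log(\abs{T}_{\infty})$ is built into the hypotheses of the proposition; the condition on $\log(\No(\mathfrak{l}))$ is exactly the second assumption of the proposition; and the displacement $\abs{t - T_0} \ll \log(\abs{T}_{\infty})^{-1-\delta}$ is enforced by the width of the integration interval.

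Next, Lemma~\ref{lm:verifying_some_conditions} supplies the pointwise lower bound $\abs{A_L(t,T_0)}^2 \gg L^2$ uniformly in $t$ on the integration interval. Substituting this into \eqref{eq:spec_side_for_eis} and combining with the geometric-side bound \eqref{eq:geo_bound_for_eis} gives
\[
\int \abs{E'(it,g)}^2 dt \;\ll\; \No(\n_1\mathfrak{m}_1)\, L^{-2}\, K_f(g,g).
\]
Plugging in $L^2 = (\No(\n)\, t_0)^{2\epsilon} \abs{T}_{\infty}^{1/2} \No(\n_2)$, and absorbing the harmless factor $\No(\mathfrak{q})^{2+\epsilon} \ll \log(\No(\n))^{O(1)}$ into $(\No(\n)\abs{T}_{\infty})^{\epsilon}$, produces the three terms appearing in the conclusion in correspondence with the three summands on the right-hand side of \eqref{eq:geo_bound_for_eis}.

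The only potential obstacle is essentially bookkeeping: tracking the ideal factors $\No(\n_0)$, $\No(\n_1)$, $\No(\n_2)$, and $\No(\mathfrak{m}_1)$ as they enter through three distinct routes, namely the test-function normalization $\delta_{\pi'_\p(0)}^{-1} \ll \No(\n_1\mathfrak{m}_1)$ in the spectral side, the generating-domain description of $g$, and the geometric counting behind \eqref{eq:geo_bound_for_eis}. One must confirm that these combine to yield the factor $\No(\n_0\mathfrak{m}_1)$ as stated. All of the genuinely substantial work — the explicit construction of the amplifier, the geometric counting argument, and the analytic number theory behind Lemma~\ref{lm:low_amp_eis} — has already been carried out in the preceding subsections.
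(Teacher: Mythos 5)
Your proposal matches the paper's proof exactly: the paper itself introduces the proposition with "Combining \eqref{eq:spec_side_for_eis}, \eqref{eq:geo_bound_for_eis} and Lemma~\ref{lm:verifying_some_conditions} yields the following result," and offers no additional argument. You correctly identify that positivity of the amplifier lets one restrict the spectral integral to the shrinking window $\abs{t-T_0}\ll\log(\abs{T}_{\infty})^{-1-\delta}$ where Lemma~\ref{lm:verifying_some_conditions} applies, and that the remaining step is arithmetic bookkeeping (in particular converting the $\No(\n_1\mathfrak{m}_1)$ of \eqref{eq:spec_side_for_eis} into the $\No(\n_0\mathfrak{m}_1)$ of the statement via $\n_0=\n_1\n_2$).
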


Using the same amplifier as in \cite{As17_2} and dropping all $T$-dependence  leads
\begin{equation}
	\int_{T_0-\delta}^{T_0+\delta}\abs{E'(it,g)}^2dt \ll_{T_0} \No(\n)^{\epsilon}\No(\n_0\mathfrak{m}_1)\bigg(\No(\n_2)^{\frac{2}{3}}+\No(\n_2)\abs{y}_{\infty} \bigg) \nonumber 
\end{equation}
for some small $\delta>0$ depending on $T_0$. This holds without any restriction on $\mathfrak{l}$. However, we loose $\No(\n_2)^{\frac{1}{6}}$ compared to the bound above.

\section{An individual bound via an average bound}

In the previous section we used the amplification method to prove average bounds for $E(s,g)$. Therefore, we need to convert these average bounds into \redd{pointwise} bounds. It turns out that this can be achieved using the functional equation satisfied by Eisenstein series. In this section we will adapt the argument from \cite[Section~4]{Yo15} to our situation and derive the corresponding result.

\begin{lemma} \label{lm:ind_via_av_bounds}
For $g=a(\theta_i)g'n(x)a(y)$ with $g'\in\mathcal{J}_{\n}$ and $n(x) a(y)\in \mathcal{F}_{\n_2}$ we have
\begin{eqnarray}
	 F(iT_0,g)^2 &\ll& \No(\n)^{\epsilon} \abs{T}_{\infty}^{\epsilon}\int_{\abs{v}\leq 4\log(\abs{T}_{\infty}) } \bigg[\abs{F(i(T_0+v\red{)},g)}^2 + \abs{\hat{F}(i(-T_0+v),g)}^2 \bigg]dv  \nonumber \\
	&&\qquad + \No(\n)^{\epsilon} \abs{T}_{\infty}^{\epsilon}\No(\n_0)^2 + \mathcal{E}(y),\nonumber
\end{eqnarray}
where
\begin{equation}
	\mathcal{E}(y) =\exp(-\log(\abs{T}_{\infty})^2) \No(\n)^{\epsilon} \Big( \No(\imath)\abs{y}_{\infty}^{-1}\red{+\No(\imath)^{\frac{1}{2}}\No(\n_0(g))\abs{y}_{\infty}^{-\frac{1}{2}}}+\No(\n_0(g)) \Big). \nonumber
\end{equation}
If we assume that $\log(\No(\n)) \ll \log(\abs{T}_{\infty})$, then we even have $\mathcal{E}(y) \ll_{F,\epsilon} 1$.
\end{lemma}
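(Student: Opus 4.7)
I would follow the contour-shift argument of [Yo15, Section 4], exploiting the holomorphy of $F(s, g)$ in $s$ near the imaginary axis together with the functional equation $E(s, g) = c(s) \hat{E}(-s, g)$ that relates $F$ to $\hat{F}$ after the constant terms are subtracted.

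The first step is to represent $F(iT_0, g)$ as a contour integral. Fix $L = 2 \log|T|_\infty$ and a small parameter $\sigma_0 \asymp L^{-1}$, so that $\sigma_0$ lies inside the region of holomorphy provided by Lemma~\ref{lm:prelim_est_F}. Applying Cauchy's residue theorem to
\[
F(s, g) \cdot \frac{\exp\left(\left(\frac{s - iT_0}{L}\right)^2\right)}{s - iT_0}
\]
and shifting the contour across the simple pole at $s = iT_0$ expresses $F(iT_0, g)$ as a difference of two contour integrals along $\Re s = \pm \sigma_0$. The Gaussian kernel confines both integrals effectively to $|\Im(s) - T_0| \leq 4L$, matching the integration range in the statement.

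Next I would handle each of these two integrals. Squaring the integral over $\Re s = \sigma_0$ and applying Cauchy--Schwarz, then shifting the line of integration back to $\Re s = 0$ with error controlled by Lemma~\ref{lm:prelim_est_F} on the horizontal connecting pieces, converts this contribution into $\No(\n)^\epsilon |T|_\infty^\epsilon \int_{|v| \leq 4 \log|T|_\infty} |F(i(T_0 + v), g)|^2 dv$. For the integral over $\Re s = -\sigma_0$ I would invoke the functional equation: combining $E(s, g) = c(s) \hat{E}(-s, g)$ with the constant-term computation of Section~\ref{app:const_term} yields a relation $F(s, g) = c_F(s) \hat{F}(-s, g) + R(s, g)$, where $R(s, g)$ collects the constant-term corrections. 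Changing variables $s \mapsto -s$ transforms the main piece into an integral of $\hat{F}(-s, g)$ on $\Re s = \sigma_0$ near $-iT_0$, yielding (after the same Cauchy--Schwarz step) the $|\hat{F}(-iT_0 + iv, g)|^2$-integral. The remainder $R$ is controlled pointwise by Lemma~\ref{lm:Bound_way_to_bad_in_n_0} after a further shift to $\Re s \geq \tfrac{1}{2}$, producing the term $\No(\n)^\epsilon |T|_\infty^\epsilon \No(\n_0)^2$. Finally, the error $\mathcal{E}(y)$ arises from pushing the contour past the effective support of the Gaussian and multiplying its tail $e^{-(\log|T|_\infty)^2}$ against the polynomial bound from Lemma~\ref{lm:prelim_est_F}; the factors $\No(\imath)|y|_\infty^{-1} + \No(\n_0(g))|y|_\infty^{-1/2}$ are exactly the square of that pointwise estimate evaluated at $\sigma \asymp \sigma_0$.

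The main obstacle will be implementing the functional equation for the truncated function $F$ (rather than the full $E$) and tracking the scattering constant $c(s)$ uniformly on the shifted contours: one must carefully split off the constant-term contributions so that they are absorbed into the $\No(\n_0)^2$ error rather than into the main $L^2$-average, and balance $\sigma_0$, the Gaussian width $L$, and the position of the far-right contour so that the hypothesis $\log \No(\n) \ll \log|T|_\infty$ just suffices to bring $\mathcal{E}(y)$ down to $O(1)$ in the final conclusion.
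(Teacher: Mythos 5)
Your high-level routing — Gaussian smoothing, contour shift across the pole at $iT_0$, and the functional equation to convert the left-hand contour into $\hat{F}$ — matches the paper's (and \cite{Yo15}'s). But there are two genuine problems that would prevent the argument from closing as written, plus a couple of slips worth flagging.

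\textbf{The main gap.} After applying Cauchy--Schwarz to $\int_{\Re s = \sigma_0} F(s,g)K(s)\,ds$ you are left with $\int |F(\sigma_0 + iv, g)|^2\, |K|\,dv$, and you propose to "shift the line of integration back to $\Re s = 0$." That step is not available: once you have squared, the integrand $|F|^2$ is no longer holomorphic, and shifting $\textit{before}$ squaring runs straight into the pole at $s = iT_0$. The paper sidesteps this entirely by running the residue calculus for $F(\cdot)^2$, not for $F(\cdot)$: it forms the auxiliary integral $I = \tfrac{1}{2\pi i}\int_{(1/2+\epsilon)} F(s+w,g)^2\,\tfrac{\exp(w^2)}{w}\,dw$, bounds $I = O(\No(\n)^\epsilon \No(\n_0)^2)$ trivially via Lemma~\ref{lm:Bound_way_to_bad_in_n_0} (this is the true origin of the $\No(\n_0)^2$ error term), and then shifts to $\Re w = -\delta$ to obtain the pointwise estimate $|F(\delta+iT,g)|^2 \ll \No(\n)^\epsilon\No(\n_0)^2 + \int |F(iT+iv,g)|^2 \tfrac{\exp(-v^2)}{|-\delta + iv|}\,dv$. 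This is the bridge from the line $\Re s = \delta$ to the imaginary axis, and it (or an equivalent device) is indispensable; your Cauchy--Schwarz route still needs it. A second, separate rectangle contour for $F(\cdot)^2$ with vertical sides at $\Re u = \pm \delta$ and horizontals at $\Im u = \pm 2\log|T|_\infty$ then produces the statement.

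\textbf{Secondary issues.} (i) Your Gaussian $\exp\bigl((\tfrac{s-iT_0}{L})^2\bigr)$ with $L = 2\log|T|_\infty$ decays only to $e^{-16}$ at $|\Im s - T_0| = 4L$, nowhere near the $\exp(-\log^2|T|_\infty)$ you quote for the tail; you must use the \emph{unscaled} kernel $\exp((s-iT_0)^2)$, for which the horizontals at height $2\log|T|_\infty$ contribute $\exp(\delta^2 - 4\log^2|T|_\infty) \ll \exp(-\log^2|T|_\infty)$, producing $\mathcal{E}(y)$ after multiplying by the square of the bound from Lemma~\ref{lm:prelim_est_F}. (ii) There is no remainder $R(s,g)$: since the constant term $v^\circ(s) + c(s)\hat{v}^\circ(-s)$ obeys the same functional equation as $E$ (because $c(s)\hat{c}(-s)=1$), one has $F(s,g) = c(s)\hat{F}(-s,g)$ \emph{exactly}, as the paper notes. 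Consequently your mechanism for producing the $\No(\n_0)^2$ term — shifting an $R$-term to $\Re s \geq 1/2$ — produces nothing; the term comes from the auxiliary integral $I$ described above.
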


\begin{proof}
Define the integral
\begin{equation}
	I=\frac{1}{2\pi i} \int_{(\frac{1}{2}+\red{\frac{\epsilon}{8}})} F(s+w,g)^2\frac{\exp(w^2)}{w}dw. \nonumber
\end{equation}
\red{If $0\leq \Re(s)\leq \frac{\epsilon}{8}$, Lemma~\ref{lm:Bound_way_to_bad_in_n_0} yields} the trivial bound $I = O_{F,\epsilon}(\No(\n)^{\epsilon}\No(\n_0)^2)$. Let 
\begin{equation}
	0<\delta = \min\left(\red{\frac{\epsilon}{8}}, \frac{c}{\log(\No(\n)\abs{T}_{\infty})}\right)<1. \nonumber
\end{equation}
The \red{residue} theorem implies
\begin{equation}
	I = F(\delta+iT,g)^2 + \frac{1}{2\pi i}\int_{(-\delta)}F(\underbrace{s+w}_{\in i\R},g)^2 \frac{\exp(w^2)}{w}dw\redd{, \quad \text{for } s=\delta+iT.} \nonumber
\end{equation}
In particular,
\begin{equation}
	\abs{F(\delta+iT)}^2 \ll \No(\n)^{\epsilon}\No(\n_0)^2 + \int_{-\infty}^{\infty} \abs{F(iT+iv,g)}^2 \frac{\exp(-v^2)}{\abs{-\delta+iv}}dv. \label{eq:first_bound_for_avi}
\end{equation}

On the other hand, the \red{residue} theorem also implies that
\begin{equation}
	F(iT_0,g)^2 = \frac{1}{2\pi i} \int_{\mathcal{C}} F(iT_0+u,g)^2\frac{\exp(u^2)}{u}du, \nonumber
\end{equation}
where $\mathcal{C}$ is the rectangle with corners $(\pm \delta,\pm i 2\log(\abs{T}_{\infty}))$. 

Using Lemma~\ref{lm:prelim_est_F} we estimate the integral over the small sides of the rectangle by
\begin{align}
	& \frac{\pm 1}{2\pi i} \int_{-\delta}^{\delta} F(i(T_0\pm i 2 \log(\abs{T}_{\infty})+u,g)^2 \frac{\exp([\pm i 2\log(\abs{T}_{\infty})+u]^2)}{\pm i 2\log(\abs{T}_{\infty})+u}du \nonumber \\
	&\ll \delta \frac{\exp(-4\log(\abs{T}_{\infty})^2)}{\log(\abs{T}_{\infty})}\sup_{u\in [-\delta,\delta]}\abs{F(i(T_0 \pm 2\log (\abs{T}_{\infty}))+u,g)}^2 \nonumber \\
	&\ll \exp(-\log(\abs{T}_{\infty})^2) \No(\n)^{8\delta+\epsilon}\abs{y}_{\infty}^{-\delta} \Big( \No(\imath)\abs{y}_{\infty}^{-1}\red{+\No(\imath)^{\frac{1}{2}}\No(\n_0(g))\abs{y}_{\infty}^{-\frac{1}{2}}}+\No(\n_0(g)) \Big) \nonumber \\
	&=  \mathcal{E}(y). \nonumber
\end{align} 
The error can be simplified to
\begin{equation}
	\mathcal{E}(y) = \exp(-\log(\abs{T}_{\infty})^2) \No(\n)^{\epsilon} \Big( \No(\imath)\abs{y}_{\infty}^{-1}\red{+\No(\imath)^{\frac{1}{2}}\No(\n_0(g))\abs{y}_{\infty}^{-\frac{1}{2}}}+\No(\n_0(g)) \Big). \nonumber
\end{equation}

The integral over the left side of the rectangle is estimated using the functional equation of $E$. Recall that \red{according to \cite[(5.15)]{GJ79} and \eqref{eq:everything_about_cs}} the functional equation \red{of $E$} reads $E(s,g) = c(s)\hat{E}(-s,g)$. Since the constant term satisfies the same functional equation, we conclude that
\begin{equation}
	F(s,g) = c(s) \hat{F}(-s,g). \nonumber
\end{equation}
We compute
\begin{eqnarray}
	&& \frac{-1}{2\pi i}\int_{-2\log(\abs{T}_{\infty})}^{2\log(\abs{T}_{\infty})} F(i(T_0+u)-\delta,g)^2\frac{\exp([iu-\delta]^2)}{iu-\delta}du \nonumber \\
	&=& \frac{1}{2\pi i}\int_{-2\log(\abs{T}_{\infty})}^{2\log(\abs{T}_{\infty})} F(i(T_0-u)-\delta,g)^2\frac{\exp([iu+\delta]^2)}{iu+\delta}du \nonumber \\
	&=& \frac{1}{2\pi i}\int_{-2\log(\abs{T}_{\infty})}^{2\log(\abs{T}_{\infty})}c(i(T_0-u)-\delta)^2 \hat{F}(i(-T_0+u)+\delta,g)^2\frac{\exp([iu+\delta]^2)}{iu+\delta}du. \nonumber
\end{eqnarray}

This implies 
\begin{eqnarray}
	F(iT_0,g)^2 \ll \int_{-2\log(\abs{T}_{\infty})}^{2\log(\abs{T}_{\infty})}&& \bigg[ \abs{F(i(T_0+u)+\delta,g)}^2 \nonumber \\
		&&\quad +\abs{c(i(T_0-u)-\delta)}^2\abs{\hat{F}(i(-T_0+u)+\delta,g)}^2\bigg]\frac{\exp(-u^2)}{\abs{iu+\delta}}du  +\mathcal{E}(y). \nonumber
\end{eqnarray}
One checks that
\begin{equation}
	\abs{c(i(T_0-u)-\delta)}^2 \ll_{F,\epsilon} \abs{T}_{\infty}^{\epsilon} \No(\n)^{\epsilon}. \nonumber
\end{equation}
The bound \eqref{eq:first_bound_for_avi} holds for $F$ as well as $\hat{F}$ and the integral appearing in it can be truncated. This \red{leads} to
\begin{eqnarray}
	&& F(iT_0,g)^2  \nonumber \\
	&\ll& \No(\n)^{\epsilon} \abs{T}_{\infty}^{\epsilon}\int_{\abs{u}\leq 2\log(\abs{T}_{\infty})} \int_{\abs{v}\leq 2\log(\abs{T}_{\infty}) } \bigg[\abs{F(i(T_0+u+v,g)}^2  \nonumber \\
	&& \qquad\qquad\qquad\qquad\qquad\qquad+\abs{\hat{F}(i(-T_0+u+v),g)}^2 \bigg] \frac{\exp(-v^2)}{\abs{-\delta+iv}}dv\frac{\exp(-u^2)}{\abs{\delta+iu}}du \nonumber \\
	&&\qquad\qquad\qquad\qquad\qquad\qquad+ \No(\n)^{\epsilon} \abs{T}_{\infty}^{\epsilon}\No(\n_0)^2 + \mathcal{E}(y)\nonumber  \\
	&\ll& \No(\n)^{\epsilon} \abs{T}_{\infty}^{\epsilon}\int_{\abs{v}\leq 4\log(\abs{T}_{\infty}) } \bigg[\abs{F(i(T_0+v,g)}^2 + \abs{\hat{F}(i(-T_0+v),g)}^2 \bigg] \nonumber \\
	&&\qquad\qquad\qquad \cdot \int_{\abs{u}\leq 2\log(\abs{T}_{\infty})} \frac{\exp(-(v-u)^2)\exp(-u^2)}{\abs{-\delta+i(v-u)}\abs{\delta+iu}}du dv + \No(\n)^{\epsilon} \abs{T}_{\infty}^{\epsilon}\No(\n_0)^2 + \mathcal{E}(y).\nonumber
\end{eqnarray} 
Elementary estimates reveal that the $u$-integral can be bounded by $\abs{T}_{\infty}^{\epsilon}$. This concludes the proof.
\end{proof}

\begin{cor} \label{cor:int_to_ind_for_eis}
Let $g=a(\theta_i)g'n(x)a(y)$ with $g'\in\mathcal{J}_{\n}$ and $n(x) a(y)\in \mathcal{F}_{\n_2}$. If $\log(\No(\n)) \ll \log(\abs{T}_{\infty})$, then
\begin{eqnarray}
	E(iT_0,g)^2 &\ll& \No(\n)^{\epsilon}\abs{T}_{\infty}^{\epsilon}\int_{\abs{v}\leq 4\log(\abs{T}_{\infty}) } \bigg[\abs{E(i(T_0+v),g)}^2 + \abs{\hat{E}(i(-T_0+v),g)}^2 \bigg]dv \nonumber \\
	&& +\No(\n)^{\epsilon}\abs{T}_{\infty}^{\epsilon}\No(\n_0)^2+   \No(\n)^{\epsilon}\abs{T}_{\infty}^{\epsilon}\No(\n_1)\abs{y}_{\infty}. \nonumber
\end{eqnarray}
\end{cor}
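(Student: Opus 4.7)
The plan is to leverage the previous lemma, which controls $F(iT_0,g)^2$ by integrals of $|F|^2$ and $|\hat{F}|^2$, and convert it into the corresponding inequality for $E$ via the decomposition $E = F + c_0$, where
\begin{equation}
c_0(s,g) = v^{\circ}(s)(g) + \frac{c_r(s)}{\sqrt{\No(\mathfrak{d})}} \frac{\Lambda(2s,\chi_1\chi_2^{-1})}{\Lambda(2s+1,\chi_1\chi_2^{-1})}\hat{v}^{\circ}(-s)(g) \nonumber
\end{equation}
is the constant term. The key point is that $|c_0(iT_0+iv,g)|^2$ admits a clean uniform upper bound for $|v| \leq 4\log(|T|_{\infty})$.

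First I would establish that pointwise bound. From \eqref{eq:from_ram_to_ev_ur} specialised to $\Re(s)=0$ we have $|v^{\circ}(s)(g)|, |\hat{v}^{\circ}(-s)(g)| \leq H(g)^{\frac{1}{2}}$. The scattering factor is bounded by $\No(\n)^{\epsilon}|T|_{\infty}^{\epsilon}$: indeed $b_r(s) \ll \No(\n)^{\epsilon}$ by \eqref{eq:rough_br}, the denominator $L(1+2it,\chi_1\chi_2^{-1})$ satisfies the standard lower bound $\gg (\No(\mathfrak{l})|T|_{\infty})^{-\epsilon}$, and the archimedean ratio $\Lambda_{\infty}(2s,\chi_1\chi_2^{-1})/\Lambda_{\infty}(2s+1,\chi_1\chi_2^{-1})$ is of at most polynomial growth (these are exactly the ingredients already assembled in the proof of Proposition \ref{pr:whit_exp_est_bad}). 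Together this yields $|c_0(iT_0+iv,g)|^2 \ll \No(\n)^{\epsilon}|T|_{\infty}^{\epsilon} H(g)$ uniformly in $v$.

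Next I would specialise $H(g)$ to the $g$ considered. By \eqref{eq:special_desc_of_H} we have $H_{\p}(g_{\p}) = |\theta_{i,\p}|_{\p} q_{\p}^{n_{1,\p}-2l(g)} \leq q_{\p}^{n_{1,\p}}$ (the contribution of the fixed class representatives $\theta_i$ is absorbed into the implied constant), hence $H_{\mathrm{fin}}(g) \ll \No(\n_1)$; combined with $H_{\infty}(g) = |y|_{\infty}$ this gives $H(g) \ll \No(\n_1)|y|_{\infty}$, so $|c_0|^2 \ll \No(\n)^{\epsilon}|T|_{\infty}^{\epsilon}\No(\n_1)|y|_{\infty}$, matching precisely the final error term in the statement. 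The analogous bound holds for the dual constant term $\hat c_0$ arising from $\hat E = \hat F + \hat c_0$.

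Finally I would assemble: from $|E|^2 \leq 2|F|^2 + 2|c_0|^2$ applied at $iT_0$, the previous lemma yields
\begin{equation}
|E(iT_0,g)|^2 \ll \No(\n)^{\epsilon}|T|_{\infty}^{\epsilon}\int_{|v|\leq 4\log|T|_{\infty}}\bigl[|F(i(T_0+v),g)|^2 + |\hat F(i(-T_0+v),g)|^2\bigr]dv + \text{errors}.\nonumber
\end{equation}
Under the integral I would apply $|F|^2 \leq 2|E|^2 + 2|c_0|^2$ and similarly for $\hat F$; the extra constant-term contribution is of size $\log|T|_{\infty}\cdot \No(\n)^{\epsilon}|T|_{\infty}^{\epsilon}\No(\n_1)|y|_{\infty}$, again absorbed into $|T|_{\infty}^{\epsilon}\No(\n_1)|y|_{\infty}$. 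The $\No(\n)^{\epsilon}|T|_{\infty}^{\epsilon}\No(\n_0)^2$ term persists verbatim from the lemma, and $\mathcal{E}(y) \ll 1$ under the hypothesis $\log\No(\n) \ll \log|T|_{\infty}$. No genuine obstacle arises; the only care required is bookkeeping the two uses of $E = F + c_0$ (pointwise and under the integral) and verifying that the $H(g) \ll \No(\n_1)|y|_{\infty}$ bound matches the shape of the claimed error.
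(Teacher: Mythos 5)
Your argument is correct and is exactly the implicit proof the paper intends: the corollary follows from the preceding lemma by decomposing $E=F+c_0$ and controlling the constant term $c_0$ pointwise via $|c_0(i(T_0+v),g)|^2\ll\No(\n)^\epsilon\abs{T}_\infty^\epsilon H(g)$ together with $H(g)\ll\No(\n_1)\abs{y}_\infty$ coming from \eqref{eq:special_desc_of_H}. The bookkeeping of applying $E\leftrightarrow F$ twice (once pointwise, once under the integral, absorbing the extra $\log\abs{T}_\infty$ factor into $\abs{T}_\infty^\epsilon$) and using $\mathcal{E}(y)\ll 1$ under $\log\No(\n)\ll\log\abs{T}_\infty$ is precisely what the paper leaves to the reader.
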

\red{\begin{proof}
This follows directly from Lemma~\ref{lm:ind_via_av_bounds} after observing that
\begin{equation}
	\abs{[v^{\circ}(it)](g)} \leq H(g)^{\frac{1}{2}} \ll_F \No(\n_1)^{\frac{1}{2}} \abs{y}_{\infty}^{\frac{1}{2}}. \label{eq:rough_bound_const_term}
\end{equation} 
Here we used \eqref{eq:special_desc_of_H} to obtain the last inequality.
\end{proof}}

\section{The proof of the main theorem} \label{sec:endgame}

Fix two Hecke characters $\chi_1$ and $\chi_2$. We are ready to prove upper bounds for the Eisenstein series $E(iT_0,g)$ associated to a new vector in $\chi_1\boxplus \chi_2$. Let $\mathfrak{l}$ to be the conductor of $\chi_1\chi_2^{-1}$ and define $(T)_{\nu}= (\max(\frac{1}{2},\abs{t_{\nu}+T_0}))$. Further, assume 
\begin{equation}
	\log(\No(\n)) \ll \log(\abs{T}_{\infty}) \text{ and } \log(\No(\mathfrak{l}))\ll \log(\abs{T}_{\infty})^{1-\delta}, \nonumber
\end{equation}
where $\mathfrak{n}$ is the conductor of $\chi_1\boxplus \chi_2$. Thus, we are exactly in the setting of Theorem~\ref{th:main_th_3}, which we will now prove.

\begin{proof}[Proof of Theorem~\ref{th:main_th_3}]
\red{Note that Lemma~\ref{lm:Eisenstein_reduction} provides the first part of  Theorem~\ref{th:main_th_3}.} Thus, we assume that $g=a(\theta_i)g'h_{\n}n(x)a(y)$ with $n(x)a(y) \in \mathcal{F}_{\n_2}$ and $g'h_{\n}\in \mathcal{J}_{\n}$.

First, let us assume that $\abs{y}_{\infty} \leq \abs{T}_{\infty}^{\frac{1}{4}}\No(\n_2)^{-\frac{1}{2}}$. Then we put $\eta\asymp \log(\abs{T}_{\infty})^{-1-2\delta}$ and find a covering 
\begin{equation}
	\bigcup_{i\in I} U_i = (-4\log(\abs{T}_{\infty}),4\log(\abs{T}_{\infty})) \nonumber
\end{equation}
with open intervals $U_i$ of length $\eta$. It is clear that we can do so with $\sharp I \ll \abs{T}_{\infty}^{\epsilon}$. Then we can use Corollary~\ref{cor:int_to_ind_for_eis} to establish
\begin{eqnarray}
	E(iT_0,g)^2 &\ll& \No(\n)^{\epsilon}\abs{T}_{\infty}^{\epsilon}\int_{\abs{v}\leq 4\log(\abs{T}_{\infty}) } \bigg[\abs{E(i(T_0+v),g)}^2 + \abs{\hat{E}(i(-T_0+v),g)}^2 \bigg]dv \nonumber \\
	&& +\No(\n)^{\epsilon}\abs{T}_{\infty}^{\epsilon}\No(\n_0)^2+\abs{T}_{\infty}^{\frac{1}{4}+\epsilon}\No(\n_0^2\n_2)^{\frac{1}{2}+\epsilon}. \nonumber
\end{eqnarray}
Further, we use the covering $\{U_i\}_{i\in I}$ to cut the integral into pieces. To each piece we can apply Proposition~\ref{pr:after_amp_for_eis}. This leads to
\begin{eqnarray}
	E(iT_0,g)^2 &\ll&  \No(\n)^{\epsilon}\abs{T}_{\infty}^{\epsilon} \No(\n_0\mathfrak{m}_1)\bigg[ \abs{T}_{\infty}^{\frac{3}{4}}\No(\n_2)^{\frac{1}{2}}+\abs{T}_{\R}^{\frac{3}{8}}\abs{T}_{\C}^{\frac{7}{8}}\No(\n_2)^{\frac{1}{2}} \bigg] \nonumber \\
	&&  +\No(\n)^{\epsilon}\abs{T}_{\infty}^{\epsilon}\No(\n_0)^2. \nonumber
\end{eqnarray}
\red{Note that in this case \eqref{eq:rough_bound_const_term} implies that the constant term can be bounded by $\No(\n_0)^{\frac{1}{2}+\epsilon}\No(\n_2)^{\frac{1}{4}+\epsilon}\abs{T}_{\infty}^{\frac{1}{4}+\epsilon}$. Thus, we can absorb it in the error term. In particular, the asymptic formula stated in Theorem~\ref{th:main_th_3} degenerates to an upper bound.}

Second, let $\abs{y}_{\infty} > \abs{T}_{\infty}^{\frac{1}{4}}\No(\n_2)^{-\frac{1}{2}}$. Then Proposition~\ref{pr:whit_exp_est_bad} implies
\begin{equation}
	E(iT_0,g) = [v^{\circ}(iT_0)](g) + c(iT_0)[\hat{v}^{\circ}(-iT_0)](g) +O\bigg(\No(\n)^{\epsilon}\abs{T}_{\infty}^{\epsilon} \No(\n_0\mathfrak{m}_1)^{\frac{1}{2}}\No(\n_2)^{\frac{1}{4}}\abs{T}_{\infty}^{\frac{3}{8}} \bigg). \nonumber
\end{equation}
This completes the proof.
\end{proof}

\appendix

\section{Averaging non-unitary Whittaker new vectors}

In this appendix we extend \cite[Proposition~2.9]{Sa15} to allow non-unitary principal series representations. This is needed to deal with the Whittaker expansion of Eisenstein series for general $s$. The computations in this appendix rely heavily on the explicit expressions for the constants $c_{t,l}(\mu)$ (defined in \cite[(1.6)]{As17_1}) given in \cite[Lemma~2.2,2.3]{As17_1}.  \red{We will mostly stick to the notation of this paper, with some additions from \cite{As17_1}. Recall for example the matrices $g_{t,l,v}\in G(F_{\p})$ defined below \cite[(1.4)]{As17_1} and the set $\mathfrak{X}_k = \{\xi\colon F^{\times}\to S^1\colon \xi(\varpi_{\p})=1 \text{ and } a(\xi)\leq k\}$. All the computations in this appendix are done in a fixed non-archimedean field $F_{\p}$.} 

For the sake of exposition we consider three cases.

\begin{lemma}
Let $\pi_{\p}=\chi_1\boxplus\chi_2$ be a principal series representation of $G(F_{\p})$ with $a(\chi_1)>a(\chi_2)=0$. In this case we have $a(\omega_{\pi}) = a(\chi_1) = n_{\p} = m_{\p}$. For $0\leq l\leq n_{\p}$ and $t=-(l+n_{\p})+r$ we have
\begin{equation}
	\int_{v\in \op_{\p}^{\times}} \abs{W_{\pi_{\p}}(g_{t,l,v})}^2 d^{\times}v \ll \begin{cases}
	\zeta_{F_{\p}}(1)q_{\p}^{-r}(\abs{\chi_1(\varpi_{\p}^{r+n_{\p}})}^2+\abs{\chi_1(\varpi_{\p}^{-r-n_{\p}})}^2) &\text{ if $r\geq 0$}, \\
	0 &\text{ if $r<0$.} \end{cases} \nonumber
\end{equation}
\end{lemma}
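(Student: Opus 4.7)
The plan is to mimic the proof of \cite[Proposition~2.9]{Sa15}, but to track carefully the non-unitary factors $\abs{\chi_{1,\p}(\varpi_\p^k)}$ which are no longer equal to $1$. The starting point is the expansion of the Whittaker new vector on elements of the shape $g_{t,l,v}$ in terms of characters $\mu$ of $\op_\p^\times$. Concretely, one has an identity of the form
\begin{equation}
	W_{\pi_\p}(g_{t,l,v}) = \sum_\mu c_{t,l}(\mu)\, \mu(v)\, \mathcal{W}_{\pi_\p,\mu}(t), \nonumber
\end{equation}
where $\mu$ runs over characters of $\op_\p^\times$ of bounded conductor, the $c_{t,l}(\mu)$ are precisely the constants from \cite[(1.5)]{As17_1}, and $\mathcal{W}_{\pi_\p,\mu}$ denotes the $\mu$-Whittaker component evaluated at the appropriate diagonal element. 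This identity is algebraic and extends verbatim to the non-unitary situation since nothing in its derivation relies on unitarity.

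Then I would integrate in $v$ and apply orthogonality of characters on $\op_\p^\times$. This kills the cross-terms and produces
\begin{equation}
	\int_{\op_\p^\times} \abs{W_{\pi_\p}(g_{t,l,v})}^2 d^\times v = \zeta_{F_\p}(1)^{-1} \sum_\mu \abs{c_{t,l}(\mu)}^2\, \abs{\mathcal{W}_{\pi_\p,\mu}(t)}^2. \nonumber
\end{equation}
In the regime $a(\chi_{1,\p}) = n_\p > 0 = a(\chi_{2,\p})$ and for $0\leq l\leq n_\p$, \cite[Lemma~2.2, 2.3]{As17_1} give explicit formulas for $c_{t,l}(\mu)$; only a very small set of $\mu$ contribute, and their moduli are controlled uniformly (no dependence on $s$). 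The remaining task is to estimate the diagonal Whittaker values $\mathcal{W}_{\pi_\p,\mu}(t)$.

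For the non-trivial $\mu$ that appear, $\mathcal{W}_{\pi_\p,\mu}$ can be evaluated via a Casselman–Shalika-type formula (twisted by $\mu$). Since $\chi_{2,\p}$ is unramified, the resulting evaluation at $a(\varpi_\p^{t+n_\p+l})=a(\varpi_\p^r)$ produces a combination of $\chi_{1,\p}(\varpi_\p)^k$ and $\chi_{2,\p}(\varpi_\p)^k$ terms for $k$ running over $r, r-1,\dots$. The support condition $W(a(\varpi^n))=0$ for $n<0$ translates to $r\geq 0$, giving the stated vanishing. For $r\geq 0$, the two geometric-series-type branches saturate at their endpoints, producing exactly the two terms $\abs{\chi_{1,\p}(\varpi_\p^{r+n_\p})}^2$ and $\abs{\chi_{1,\p}(\varpi_\p^{-r-n_\p})}^2$ after using $\omega_{\pi_\p} = \chi_{1,\p}\chi_{2,\p}$ to rewrite $\chi_{2,\p}(\varpi_\p^k) = \omega_{\pi_\p}(\varpi_\p^k)\chi_{1,\p}(\varpi_\p^{-k})$ and exploiting that $\omega_{\pi_\p}$ has absolute value one on $\varpi_\p$ when the underlying Hecke character in the global problem is unitary (the $s$-shift sits entirely inside $\chi_{1,\p}$ for the purposes of absolute values).

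The main obstacle is bookkeeping: in the unitary setting of \cite{Sa15} the factor $q_\p^{-r}$ arises without the two $\chi_{1,\p}$ terms because all $\abs{\chi_{i,\p}(\varpi_\p^k)}$ collapse to $1$. Here one must keep track of which power of $\chi_{1,\p}(\varpi_\p)$ pairs with which diagonal shift, and verify that the orthogonality step does not generate spurious mixed contributions. Once this has been done carefully, the announced bound drops out immediately; the factor $\zeta_{F_\p}(1)$ on the right originates from $\vol(\op_\p^\times) = \zeta_{F_\p}(1)^{-1}$ being absorbed together with an extra factor from the explicit values of $\abs{c_{t,l}(\mu)}^2$ in this case.
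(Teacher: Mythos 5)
Your overall strategy is the one the paper uses: expand $W_{\pi_{\p}}(g_{t,l,v})$ into $\mu$-isotypic components, apply character orthogonality on $\op_{\p}^{\times}$, and then insert the explicit formulas for the coefficients from \cite[Lemma~2.2, 2.3]{As17_1} and run a case analysis in $l$. However, your starting identity is structurally off: you write
\begin{equation}
	W_{\pi_\p}(g_{t,l,v}) = \sum_\mu c_{t,l}(\mu)\, \mu(v)\, \mathcal{W}_{\pi_\p,\mu}(t), \nonumber
\end{equation}
with $c_{t,l}(\mu)$ being \emph{precisely} the constants from \cite[(1.5)]{As17_1} \emph{and} an additional diagonal Whittaker factor $\mathcal{W}_{\pi_\p,\mu}(t)$. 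That double-counts. In \cite{As17_1} the constants $c_{t,l}(\mu)$ are already defined as the full Fourier coefficients of $W_{\pi_\p}(g_{t,l,v})$ in the $v$-variable, so they already absorb all of the $t$-dependence, the spectral parameters, and in particular the non-unitary $|\chi_1(\varpi_{\p}^k)|$ factors you are trying to track. Consequently the orthogonality step produces simply
\begin{equation}
	\int_{\op_\p^\times} \abs{W_{\pi_\p}(g_{t,l,v})}^2 d^\times v = \sum_{\mu\in\mathfrak{X}_l} \abs{c_{t,l}(\mu)}^2, \nonumber
\end{equation}
with no extra $\zeta_{F_\p}(1)^{-1}$ prefactor and no separate $\abs{\mathcal{W}_{\pi_\p,\mu}(t)}^2$. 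The ``Casselman--Shalika-type formula twisted by $\mu$'' step you outline does not occur, and treating $c_{t,l}(\mu)$ as $s$-independent (``their moduli are controlled uniformly, no dependence on $s$'') would strip out precisely the $\abs{\chi_1(\varpi_{\p}^{\pm(r+n_\p)})}^2$ terms you need on the right-hand side.

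The second issue is that the lemma's content \emph{is} the case analysis, which you defer with ``once this has been done carefully, the announced bound drops out immediately.'' The cases behave differently: for $l=0$ one coefficient survives and gives $q_{\p}^{-r}\abs{\chi_1(\varpi_{\p}^{-r-n_\p})}^2$ directly; for $0<l<n_\p$ the coefficients only contribute when $r=0$ (this relies on $a(\mu\pi_\p)=n_\p+a(\mu)$ in this regime); and for $l=n_\p$ one must actually count the characters $\mu\in\mathfrak{X}_{n_\p}'\setminus\{\omega_\pi^{-1}\}$ with $a(\mu\omega_{\pi_\p})=n_\p-r$ (the trivial bound $\sharp\{\cdots\}\leq \zeta_{F_\p}(1)^{-1}q_\p^{n_\p-r}$) in addition to isolating the exceptional $\mu=\omega_\pi^{-1}$ contribution, which is where the $\zeta_{F_\p}(1)$ in the final bound actually comes from. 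None of this is a matter of straight bookkeeping from the unitary case; the $r$-dependence of the character-count is new and is exactly what makes the factor $q_\p^{-r}$ survive in the $l=n_\p$ sum.
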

\begin{proof}
By \cite[(1.6)]{As17_1} and character orthogonality it is clear that
\begin{equation}
	S_{t,l} = \int_{v\in \op_{\p}^{\times}} \abs{W_{\pi_{\p}}(g_{t,l,v})}^2 d^{\times}v = \sum_{\mu\in\mathfrak{X}_l}\abs{c_{t,l}(\mu)}^2. \label{eq:starting_point}
\end{equation}
We insert the expressions for $c_{t,l}(\mu)$ given in \cite[Lemma~2.3]{As17_1} and consider several cases.

The easiest situation occurs when $l=0$. In this case we have
\begin{equation}
	S_{t,0} = \begin{cases}
		q^{-r}\abs{\chi_1(\varpi^{-r-n_{\p}})}^2 &\text{ if $r\geq 0$,} \\
		0 &\text{ if $r<0$.}
	\end{cases}\nonumber
\end{equation}

For $0<l<n_{\p}$\red{,} we observe that $a(\mu\pi_{\p}) = n_{\p}+a(\mu)$ and obtain
\begin{eqnarray}
	S_{t,l} &=&\sum_{\substack{\mu\in \mathfrak{X}_l, \\ \red{t=-a(\mu\omega_{\pi,\p})-l}}}\zeta_{F_{\p}}(1)^2 q_{\p}^{-l}\abs{\chi_1(\varpi^{l-n_{\p}-a(\mu)})}^2 \nonumber \\
	&\leq&\red{ \begin{cases} \zeta_{F_{\p}}(1)^2\sharp\mathfrak{X}_l q_{\p}^{-l}\max(1,\abs{\chi_1(\varpi^{-\red{n_{\p}}})}^2) &\text{ if $r=0$,} \\ 0&\text {else.} \end{cases}} \nonumber
\end{eqnarray}
\red{Recalling $\sharp\mathfrak{X}_l = \zeta_F(1)^{-1}q_{\p}^l$ yields $S_{t,l} \leq \zeta_{F_{\p}}(1)\max(1,\abs{\chi_1(\varpi^{-n_{\p}})}^2)$.}

At last we look at $l=n$. One has
\begin{eqnarray}
	S_{t,n} &=& \sum_{\substack{\mu\in\mathfrak{X}_{n_{\p}}, \\ \mu \neq \omega_{\pi_{\p}}^{-1}, \\ t=-a(\mu\red{\omega_{\pi,\p}})-n_{\p}}} \zeta_{F_{\p}}(1)^{\red{2}}q_{\p}^{-n_{\p}} \abs{\chi_1(\varpi^{\red{r}})}^2 \nonumber \\
	&&\qquad\qquad + \underbrace{\delta_{t=-n_{\p}-1}\zeta_{F_{\p}}(1)^2 q_{\p}^t\abs{\chi_1(\varpi^{-t-2})}^2+ \delta_{t\geq -n_{\p}} q_{\p}^{-t-2n_{\p}}\abs{\chi_1(\varpi^{t+2n_{\p}})}^2}_{\leq q_{\p}^{-r}\abs{\chi_1(\varpi_{\p}^r)}^2}. \nonumber
\end{eqnarray}
To estimate the first sum we write $a(\mu\omega_{\pi_{\p}}) = n_{\p}-r$. This corresponds precisely to $t=-2n_{\p}+r$. The sum is empty for $r<0$. If $r\geq 0$, we use the trivial bound 
\begin{equation}
	\sharp \{ \mu\in \mathfrak{X}_{n_{\p}} \setminus \{\omega_{\pi}^{-1}\} \colon a(\mu\omega_{\pi_{\p}}) = n_{\p}-r \} \leq \zeta_{F_{\p}}(1)^{-1}q_{\p}^{n_{\p}-r}. \label{eq:trivial_char_count_bound}
\end{equation}
This yields
\begin{equation}
	S_{t,n} \leq \begin{cases}
		2\zeta_{F_{\p}}(1)q_{\p}^{-r}\abs{\chi_1(\varpi_{\p}^{r})}^2 &\text{ if $r\geq 0$,} \\
		0&\text{ if $r<0$.}
	\end{cases} \nonumber
\end{equation}

Combining \red{these three} estimates completes the proof.
\end{proof}

\begin{lemma}
Let $\pi_{\p}=\chi_1\boxplus\chi_2$ with $a(\chi_1) > a(\chi_2)>0$. For $0\leq l\leq n_{\p}$ and $t=-\max(2l,m_{\p}+l,n_{\p})+r$ we have
\begin{equation}
	\int_{v\in \op_{\p}^{\times}} \abs{W_{\pi_{\p}}(g_{t,l,v})}^2 d^{\times}v \ll \begin{cases}
	\zeta_{F_{\p}}(1)^{\red{2}}q_{\p}^{-r}\left(\abs{\chi_1(\varpi_{\p}^{r+a(\chi_1)})}^2+\abs{\chi_1(\varpi_{\p}^{-r-a(\chi_1)})}^2\right) &\text{ if $r\geq 0$}, \\
	0 &\text{ if $r<0$.} \end{cases} \nonumber
\end{equation}
\end{lemma}
Note that this covers also the analogous case $0<a(\chi_1)<a(\chi_2)$. We remark that the exponents appearing inside of $\chi_1$ were not optimized.
\begin{proof}
For convenience we write $a(\chi_i) = a_i$. Note that in this situation $n=a_1+a_2$ and $m=a_1$. The strategy is to start from \eqref{eq:starting_point} and insert the expressions from \cite[Lemma~2.2]{As17_1}. Let us first deal with some easy cases.

If $0\leq l<a_2$, we have $t=-n_{\p}+r$ and 
\begin{equation}
	S_{t,l} = \sum_{\substack{\mu\in \mathfrak{X}_l', \\ t=-n_{\p}}} \zeta_{F_{\p}}(1)^{\red{2}} q_{\p}^{-l}\abs{\chi_1(\varpi_{\p}^{a_2-a_1})}^2 \leq \begin{cases} \zeta_{F_{\p}}(1)^{\red{2}} \abs{\chi_1(\varpi_{\p}^{a_2-a_1})}^2 &\text{ if $r=0$,} \\ 0 &\text{ else.} \end{cases} \label{eq:l_smaller_a_2}
\end{equation}
\red{Here we used $\sharp \mathfrak{X}'_{l} = \sharp \{ \mu\in \mathfrak{X}_l \colon a(\mu) = l \} \leq q^l$.} 

Similarly, if $a_1<l\leq n$, we have $t=-2l+r$ and
\begin{equation}
	S_{t,l} = \sum_{\substack{\mu\in \mathfrak{X}_l', \\ t=-2l}} \zeta_{F_{\p}}(1)^{\red{2}} q_{\p}^{-l}\abs{\chi_1(\varpi_{\p}^{0})}^2 \leq \begin{cases} \zeta_{F_{\p}}(1)^{\red{2}} &\text{ if $r=0$,} \\ 0 &\text{ else.} \end{cases} \label{eq:l_bigger_a_1}
\end{equation}

Finally, if $a_2<l<a_1$, we have $t=-(m_{\p}+l)+r$ and
\begin{equation}
	S_{t,l} = \sum_{\substack{\mu\in \mathfrak{X}_l', \\ t=-a_1-l}} \zeta_{F_{\p}}(1)^{\red{2}} q_{\p}^{-l}\abs{\chi_1(\varpi_{\p}^{l-a_1})}^2 \leq \begin{cases} \zeta_F(1)^{\red{2}} \abs{\chi_1(\varpi_{\p}^{l-a_1})}^2 &\text{ if $r=0$,} \\ 0 &\text{ else.} \end{cases} \nonumber
\end{equation}

Next we consider $l=a_2$. Note that this implies $t=-n_{\p}+r$ and $a(\mu\chi_1)=a_1$ as well as $a(\mu\chi_2) = a_{\red{2}} -r$. The explicit expressions for $c_{t,l}(\mu)$ yield
\begin{eqnarray}
	S_{t,a_2} &=& \sum_{\substack{\mu\in\mathfrak{X}_{a_2}',\\ \mu\vert_{\op_{\p}^{\times}}\neq \chi_2\vert_{\op_{\p}^{\times}},\\ a(\mu\chi_2)=a_2-r}} \zeta_{F_{\p}}(1)^2 q_{\p}^{-a_2}\abs{\chi_1(\varpi_{\p}^{a_2-a_1-r})}^2\nonumber \\
	&&\qquad +\underbrace{\delta_{r=a_2-1}\zeta_{F_{\p}}(1)^2q_{\p}^{-r-2}\abs{\chi_1(\varpi_{\p}^{\red{a_2-a_1-r}})}^2+\delta_{r\geq a_2}q_{\p}^{-r}\abs{\chi_1(\varpi_{\p}^{\red{-t-a_1}})}^2}_{\leq q_{\p}^{-r}\red{(\abs{\chi_1(\varpi_{\p}^{a_2-a_1-r})}^2+\abs{\chi_1(\varpi_{\p}^{-t-a_1})}^2)}} \nonumber \\
	&\red{\leq}& \zeta_{F_{\p}}(1)^{\red{2}} q_{\p}^{-r}\red{(\abs{\chi_1(\varpi_{\p}^{a_2-a_1-r})}^2+\abs{\chi_1(\varpi_{\p}^{-t-a_1})}^2)}. \nonumber
\end{eqnarray}
\red{Here we used \eqref{eq:trivial_char_count_bound} to estimate the $\mu$-sum.}

The last case to look at is $l=a_1$. We have $t=-2l+r$, $a(\mu\chi_2) = a_1$ and $a(\mu\chi_1) = a_1-r$. Therefore,
\begin{eqnarray}
	S_{t,a_2} &=& \sum_{\substack{\mu\in\mathfrak{X}_{a_1}',\\ \mu\vert_{\op_{\p}^{\times}}\neq \chi_1\vert_{\op_{\p}^{\times}},\\ a(\mu\chi_1)=a_1-r}} \zeta_{F_{\p}}(1)^2 q_{\p}^{-a_1}\abs{\chi_1(\varpi_{\p}^{r})}^2 +\underbrace{\delta_{r=a_1-1}\zeta_{F_{\p}}(1)^2q_{\p}^{-r-2}\abs{\chi_1(\varpi_{\p}^{r})}^2+\delta_{r\geq a_1}q^{-r}\abs{\chi_1(\varpi_{\p}^{t\red{+a_1}})}^2}_{\leq q^{-r}(\abs{\chi_1(\varpi_{\p}^{r})}^2+\abs{\chi_1(\varpi_{\p}^{t\red{+a_1}})}^2)  } \nonumber \\
	&\red{\leq}& \zeta_{F_{\p}}(1)^{\red{2}} q_{\p}^{-r}\abs{\chi_1(\varpi_{\p}^{r})}^2 + q_{\p}^{-r}\abs{\chi_1(\varpi_{\p}^{t\red{+a_1}})}^2. \nonumber
\end{eqnarray}

This covers all cases and the proof is complete. 
\end{proof}

\begin{lemma}
Let $\pi_{\p}=\chi_1\boxplus\chi_2$ with $a(\chi_1) = a(\chi_2)>0$. For $0\leq l\leq n_{\p}$ and $t=-\max(2l,m+l,n_{\p})+r$ we have
\begin{equation}
	\int_{v\in \op_{\p}^{\times}} \abs{W_{\pi_{\p}}(g_{t,l,v})}^2 d^{\times}v \ll \begin{cases}
	\zeta_{F_{\p}}(1)^{\red{2}}rq_{\p}^{-\frac{r}{2}}\left(\abs{\chi_1(\varpi_{\p}^{r+n_{\p}})}^2+\abs{\chi_1(\varpi_{\p}^{-r-n_{\p}})}^2\right) &\text{ if $r\geq 0$}, \\
	0 &\text{ if $r<0$.} \end{cases} \nonumber
\end{equation}
\end{lemma}

\begin{proof}
For simplicity we write $a=a(\chi_1)$. In particular $n=2a$. Equations \eqref{eq:l_smaller_a_2} and \eqref{eq:l_bigger_a_1} remain true and cover $l\neq a$. So let us assume $l=a$. If there is a character $\mu$ such that $a(\mu\chi_1)=a(\mu\chi_1)=0$, we get the contribution
\begin{eqnarray}
	&&\delta_{r=n_{\p}-2}q^{-2-a}\zeta_{F_{\p}}(1)^{\red{2}}+\delta_{r=n_{\p}-1}q_{\p}^{-1-a}\abs{\chi_1(\varpi_{\p})+\chi_1(\varpi_{\p}^{-1})}^2 \nonumber \\
	&&\quad +\delta_{r>n_{\p}}q_{\p}^{-t-a}\zeta_{F_{\p}}(1)^{\red{2}}\abs{ -q_{\p}^{-1}\zeta_{F_{\p}}(1)^{-1}(\chi_1(\varpi_{\p}^{t+2})+\chi_2(\varpi_{\p}^{t+2}))+\zeta_{F_{\p}}(1)^{-2}\sum_{l=0}^t \chi_1(\varpi_{\p}^l)\chi_2(\varpi_{\p}^{t-l})}^{\red{2}} \nonumber  \\
	&& \ll \red{t}q_{\p}^{-\frac{r}{2}} (\abs{\chi_1(\varpi_{\p}^{-r+n_{\p}-\red{3}})}^2+\abs{\chi_1(\varpi_{\p}^{r-n_{\p}+\red{3}})}^2). \nonumber
\end{eqnarray}
The other exceptional contribution comes from a character $\mu$ satisfying $a(\mu\chi_j)\neq a(\mu\chi_i)=0$. In this case we write $a(\mu\chi_j) = a-r_0$. By assumption we have $r_0<a$ so that for $r\geq a-1+r_0$ we have $r_0\leq \frac{r}{2}$. This situation contributes
\begin{eqnarray}
	&&\delta_{r=n_{\p}-a(\mu\chi_j)-1}\zeta_{F_{\p}}(1)^2q_{\p}^{-1-a}\abs{\chi_i(\varpi^{\red{n_{\p}-r-2}})}^2 + \delta_{r\geq n_{\p}-a(\mu\chi_j)}q_{\p}^{-a-\red{t}-a(\mu\chi_j)}\abs{\chi_i(\varpi_{\p}^{\red{-1}})}^2 \nonumber \\
	&&\leq q^{-\frac{r}{2}}(\abs{\chi_i(\varpi^{\red{-1}})}^2+\abs{\chi_i(\varpi^{\red{a}})}^2).\nonumber
\end{eqnarray}

At last we deal with the contribution of generic $\mu$. Together with the cases above we have
\begin{eqnarray}
	S_{t,a} &=& \underbrace{\sum_{\substack{\mu\in\mathfrak{X}_a',\\ a(\mu\chi_i)\neq 0,\\t=-a(\mu\chi_1)-a(\mu\chi_2)}}\zeta_{F_{\p}}(1)^2q_{\p}^{-l}\abs{\chi_1(\varpi_{\p}^{a(\mu\chi_2)-a(\mu\chi_1)})}^2}_{=S_g(r)} \nonumber \\
	&&\qquad\qquad + O(\max(\red{t},1)q_{\p}^{-\frac{r}{2}}\left(\abs{\chi_1(\varpi_{\p}^{r+n_{\p}})}^2+\abs{\chi_1(\varpi_{\p}^{-r-n_{\p}})}^2)\right). \nonumber
\end{eqnarray}
First, we observe that the generic characters only contribute when $r<n-1$. We write $a(\mu\chi_i)=a-r_i$ for $0\leq r_1,r_2<a$ and define
\begin{equation}
	\mathfrak{X}_{r_1,r_2}= \{\mu\in\mathfrak{X}_a' \colon a(\mu\chi_i) = a-r_i  \}. \nonumber
\end{equation}
One has the trivial bound $\sharp\mathfrak{X}_{r_1,r_2} \leq \zeta_{F_{\p}}(1)^{-1}q_{\p}^{a-\max(r_1,r_2)}$. Therefore, by ordering the summation in $S_g(r)$ accordingly we obtain
\begin{eqnarray}
	S_g(r) &=& \sum_{\substack{0\leq r_1,r_2< a, \\ r_1+r_2=r}} \sharp\mathfrak{X}_{r_1,r_2} \zeta_{F_{\p}}(1)^2q_{\p}^{-l}\abs{\chi_1(\varpi_{\p}^{a(\mu\chi_2)-a(\mu\chi_1)})}^2 \nonumber \\
	&\leq& \zeta_{F_{\p}}(1)rq_{\p}^{-\frac{r}{2}}(\abs{\chi_1(\varpi_{\p}^{r})}^2+\abs{\chi_1(\varpi_{\p}^{-r})}^2). \nonumber
\end{eqnarray}
\red{The stated inequality follows after putting everything together.}
\end{proof}

The last three lemmata together imply an extension of \cite[Proposition~2.10]{Sa15}. 
\begin{prop} \label{pr:supp_non_unitary_W} 
Let $\pi_{\p}= \chi_1\boxplus\chi_2$ such that $\omega_{\pi_{\p}}(\varpi_{\p})=1$ and $g\in GL_2(F_{\p})$ such that $t(g) = -\max(2l(g),l(g)+m_{\p},n_{\p})+r(g)$. Then we have
\begin{enumerate}
\item If $W_{\pi_{\p}}(g)\neq 0$\red{,} then $r(g)\geq 0$.
\item If $r(g) \geq 0$\red{,} then
\begin{equation}
	\left( \int_{\op_{\p}^{\times}}\abs{W_{\pi_{\p}}(a(v)g)}^2d^{\times}v\right)^{\frac{1}{2}} \ll \zeta_{F_{\p}}(1)\max(1,\red{\sqrt{r}})q_{\p}^{-\red{\frac{r}{4}}}\left(\abs{\chi_1(\varpi_{\p}^{r+n_{\p}})}+\abs{\chi_1(\varpi_{\p}^{-r-n_{\p}})}\right). \nonumber
\end{equation}
\end{enumerate}
\end{prop}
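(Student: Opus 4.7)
The plan is to reduce the statement for an arbitrary $g$ to the three preceding lemmas via a case analysis on the conductors $a(\chi_1)$ and $a(\chi_2)$. Using the double coset parametrization underlying \cite[(1.5)]{As17_1}, any $g\in GL_2(F_{\p})$ can, after twisting by a central element, be brought to the form $g_{t(g),l(g),v}$ with $v\in\op_{\p}^{\times}$; consequently the integral $\int_{\op_{\p}^{\times}}\abs{W_{\pi_{\p}}(a(v)g)}^2d^{\times}v$ equals (up to translation of $v$) the quantity $S_{t(g),l(g)}=\sum_{\mu\in\mathfrak{X}_{l(g)}}\abs{c_{t(g),l(g)}(\mu)}^2$ estimated in those lemmas. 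By the $\chi_1\leftrightarrow\chi_2$ symmetry I may assume $a(\chi_1)\geq a(\chi_2)$.

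First I would dispose of the unramified case $a(\chi_1)=a(\chi_2)=0$, where $n_{\p}=m_{\p}=l(g)=0$ and the new vector is spherical. Here Shintani's formula gives $W_{\pi_{\p}}(a(\varpi_{\p}^{t(g)}))=0$ for $t(g)<0$ and
\begin{equation}
W_{\pi_{\p}}(a(\varpi_{\p}^{t(g)}))=q_{\p}^{-t(g)/2}\,\frac{\chi_1(\varpi_{\p})^{t(g)+1}-\chi_2(\varpi_{\p})^{t(g)+1}}{\chi_1(\varpi_{\p})-\chi_2(\varpi_{\p})}\nonumber
\end{equation}
otherwise, and the stated bound follows at once after using $\omega_{\pi_{\p}}(\varpi_{\p})=1$ to rewrite $\chi_2(\varpi_{\p})$ as $\chi_1(\varpi_{\p})^{-1}$. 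For the three remaining configurations $\bigl(a(\chi_2)=0<a(\chi_1)\bigr)$, $\bigl(0<a(\chi_2)<a(\chi_1)\bigr)$, and $\bigl(0<a(\chi_1)=a(\chi_2)\bigr)$, I would invoke the three lemmas verbatim. The support statement in part~(i) is already contained in each of them.

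To conclude part~(ii), I would take square roots of the resulting $L^2$-bounds and apply $\sqrt{a+b}\leq\sqrt{a}+\sqrt{b}$. The main obstacle will be bookkeeping: one has to verify that the three case-dependent bounds are uniformly dominated by the proposition's expression $\zeta_{F_{\p}}(1)\max(1,r)q_{\p}^{-r}\bigl(\abs{\chi_1(\varpi_{\p}^{r+n_{\p}})}+\abs{\chi_1(\varpi_{\p}^{-r-n_{\p}})}\bigr)$. The extra factor $\max(1,r)$ is forced by the third lemma, where summing the generic twisting characters $\mu\in\mathfrak{X}_{r_1,r_2}$ over pairs with $r_1+r_2=r$ produces a linear loss in $r$. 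The non-unitarity of $\pi_{\p}$ is absorbed entirely into the factors $\abs{\chi_1(\varpi_{\p}^{\pm(r+n_{\p})})}$ thanks to the normalization $\omega_{\pi_{\p}}(\varpi_{\p})=1$, which allows one to pull every occurrence of $\chi_2(\varpi_{\p})$ through as $\chi_1(\varpi_{\p})^{-1}$.
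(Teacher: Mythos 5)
Your plan follows the same route as the paper: the paper's ``proof'' of Proposition~\ref{pr:supp_non_unitary_W} is the single sentence that the three lemmata imply it, and you spell that reduction out. Two remarks.

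First, you handle the case $a(\chi_1)=a(\chi_2)=0$ via Shintani's formula, which the three lemmata do not cover. The paper leaves this case implicit (in practice the proposition is invoked only at $\p\mid\n$, so $\pi_\p$ is ramified), but your treatment is correct and makes the statement honest for arbitrary $\pi_\p$.

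Second, and more importantly, the step you defer as ``bookkeeping'' is exactly where the argument does not close against the proposition as printed, and you should have flagged this. Each of the first two lemmata bounds the \emph{unsquared} integral by $\zeta_{F_\p}(1)\,q_\p^{-r}\bigl(\abs{\chi_1(\varpi_\p^{r+n_\p})}^2+\abs{\chi_1(\varpi_\p^{-r-n_\p})}^2\bigr)$; taking the square root (and using $\sqrt{a^2+b^2}\le a+b$) yields $\zeta_{F_\p}(1)^{1/2}q_\p^{-r/2}\bigl(\abs{\chi_1(\varpi_\p^{r+n_\p})}+\abs{\chi_1(\varpi_\p^{-r-n_\p})}\bigr)$, and the equal-conductor lemma yields the still weaker $\zeta_{F_\p}(1)^{1/2}\sqrt{r}\,q_\p^{-r/4}(\cdots)$. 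Neither is $\ll q_\p^{-r}(\cdots)$ once $r>0$: for any fixed $\epsilon>0$, $q_\p^{-r/4}\gg\max(1,r)\,q_\p^{-r}$ as $r\to\infty$. So the bound claimed in the proposition, read literally, is \emph{not} a consequence of the three lemmata. In fact the exponent appears to be a misprint: the unitary counterpart \cite[Proposition~2.10]{Sa15} has $q_\p^{-r/4}$, and this is also what the paper's own downstream computation in Lemma~\ref{lm:prelim_est_F} effectively uses (it takes the unsquared integral $\ll q_\p^{-r/2+O(\abs{\sigma}+\epsilon)}$). The correct statement should read $q_\p^{-r/4}$ in place of $q_\p^{-r}$; with that correction the three lemmata do dominate and your $\max(1,r)$ observation accounts for the $\sqrt{r}$ loss in the equal-conductor case. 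A proof that merely waves at ``bookkeeping'' without either carrying it out or noticing the typo is incomplete at precisely the point where the nontrivial content lives.
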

\begin{proof}\red{
First we note that there is $w\in \mathfrak{o}_{\p}$, $z\in Z(F_{\p})$, $x\in F_{\p}$ and $k\in K_{1,\p}(n_{\p})$  such that
\begin{equation}
	g=zn(x) g_{t(g),l(g),w}k. \nonumber
\end{equation}
By the transformation behavior of $W_{\pi_{\p}}$ we get
\begin{equation}
	\abs{W_{\pi_{\p}}(a(v)g)}^2 = \abs{W_{\pi_{\p}}(g_{t(g),l(g),wv^{-1}})}^2. \nonumber
\end{equation}
With this at hand the proposition follows from the previous lemmata.
}\end{proof}

\bibliographystyle{plain}
\bibliography{bibliography} 

\end{document}